%% file: main.tex
\documentclass[11pt,a4paper]{article}
\usepackage[T1]{fontenc}
\usepackage{lmodern}
\usepackage[margin=27mm]{geometry}
\usepackage{amsmath,amssymb,amsthm,mathtools}
\usepackage{microtype}
\usepackage{booktabs,tabularx,longtable,fancyvrb}
\usepackage{needspace}
\usepackage{placeins}
\usepackage{xcolor}
\usepackage{tikz}
\usetikzlibrary{arrows.meta}
\usepackage[colorlinks=true,linkcolor=blue!45!black,citecolor=blue!45!black,urlcolor=blue!45!black]{hyperref}
\usepackage[nameinlink,capitalise]{cleveref}
\newtheorem{theorem}{Theorem}
\newtheorem{corollary}[theorem]{Corollary}
\newtheorem{conjecture}{Conjecture}
\newtheorem{question}{Question}
\newtheorem{lemma}{Lemma}[section]
\newtheorem{proposition}[lemma]{Proposition}
\theoremstyle{definition}
\newtheorem*{definition}{Definition}
\theoremstyle{remark}
\newtheorem*{remark}{Remark}
\theoremstyle{plain}
\numberwithin{equation}{section}
\crefname{theorem}{Theorem}{Theorems}
\Crefname{theorem}{Theorem}{Theorems}
\crefname{corollary}{Corollary}{Corollaries}
\Crefname{corollary}{Corollary}{Corollaries}
\crefname{conjecture}{Conjecture}{Conjectures}
\Crefname{conjecture}{Conjecture}{Conjectures}
\crefname{question}{Question}{Questions}
\Crefname{question}{Question}{Questions}
\crefname{lemma}{Lemma}{Lemmas}
\Crefname{lemma}{Lemma}{Lemmas}
\crefname{proposition}{Proposition}{Propositions}
\Crefname{proposition}{Proposition}{Propositions}
\crefname{equation}{Equation}{Equations}
\Crefname{equation}{Equation}{Equations}
\crefname{section}{Section}{Sections}
\Crefname{section}{Section}{Sections}
\crefname{subsection}{Subsection}{Subsections}
\Crefname{subsection}{Subsection}{Subsections}
\crefname{appendix}{Appendix}{Appendices}
\Crefname{appendix}{Appendix}{Appendices}
\crefname{figure}{Figure}{Figures}
\Crefname{figure}{Figure}{Figures}
\crefname{table}{Table}{Tables}
\Crefname{table}{Table}{Tables}
\newcommand{\C}{C_{\mathrm{OR}}}
\newcommand{\OR}{\mathbin{\vee}}
\newcommand{\join}{\mathbin{\oplus}}
\newcommand{\F}{\mathbb F}
\newcommand{\spec}{\Delta_{\mathrm{OR}}}
\newcommand{\phif}[1]{\varphi_{#1}}
\newcommand{\rank}{\operatorname{rank}}
\newcommand{\suppref}[1]{Appendix~\ref{#1}}
\hypersetup{pdftitle={Hedetniemi's conjecture for Shannon OR-capacity: counterexamples and asymptotic separation},pdfauthor={Julius A. Zeiss},pdfsubject={Shannon OR-capacity, categorical products and asymptotic separation}}
\title{Hedetniemi's conjecture for Shannon OR-capacity:\\counterexamples and asymptotic separation}
\author{Julius A.~Zeiss\thanks{Corresponding author. Email: \href{mailto:jzeiss@physik.rwth-aachen.de}{\texttt{jzeiss@physik.rwth-aachen.de}}}\\[3pt]
{\small Institute for Quantum Information}\\
{\small RWTH Aachen University}\\
{\small Otto-Blumenthal-Stra\ss e 20, 52074 Aachen, Germany}}
\date{}

\begin{document}
\maketitle
\vspace{-1.8em}
\begin{abstract}
We disprove the analogue of Hedetniemi's conjecture for Shannon
OR-capacity, which asserts that the capacity of a categorical product
equals the smaller capacity of its factors. For every sufficiently large
complete graph, we construct a second graph of larger capacity such that
their categorical product has capacity strictly below that of the
complete graph. Further constructions show that, for every real
threshold above an explicit constant, both factor capacities can exceed
that threshold while the product capacity lies below it. We also
construct graph pairs, again with one complete factor, whose capacities
tend to infinity while their product capacity is subpolynomial in the
smaller factor capacity. Whether the product capacity must nevertheless
tend to infinity whenever both factor capacities do remains open. The
proofs use spectral interpolation, which also yields a single spectral
point violating Hedetniemi's identity throughout the complete-factor
family. Finally, we identify the categorical product as the
distinguishability graph of a random-switching model for zero-error
communication. In this model, our constructions show that switching can
reduce the zero-error rate to a vanishing fraction of the smaller
individual rate.
\end{abstract}
\noindent\textbf{Keywords:} Shannon capacity; Hedetniemi's conjecture;
categorical product; asymptotic spectrum; zero-error communication.
\par\medskip

\section{Introduction}

Can combining two graphs reduce the number of colours needed for a
proper colouring? This question lies at the heart of Hedetniemi's
conjecture~\cite{Hedetniemi} for the categorical product.
Throughout, graphs are finite, simple and nonempty. For graphs $G$ and
$H$, the categorical product $G\times H$ has vertex set $V(G)\times V(H)$,
with $(g,h)$ adjacent to $(g',h')$ if $gg'\in E(G)$ and $hh'\in E(H)$.
The classical conjecture asserted that
\[
 \chi(G\times H)=\min\{\chi(G),\chi(H)\}.
\]
El-Zahar and Sauer proved that equality holds whenever
$\min\{\chi(G),\chi(H)\}\le4$~\cite{ElZaharSauer}.
The identity also holds for several natural graph classes,
including products with a perfect factor~\cite{BurrErdosLovasz},
products of two Kneser graphs, and products of two Schrijver
graphs~\cite{HajiabolhassanMeunier}.
Shitov disproved the conjecture in general~\cite{Shitov}.
Subsequent work established counterexamples with the smaller factor
chromatic number equal to any prescribed integer at least
$5$~\cite{TardifFive}, completing the classification by this minimum.
Once equality is known to fail, it is natural to ask how small the
product chromatic number can be relative to the smaller factor
chromatic number. He and Wigderson proved an asymptotic separation by showing that the
chromatic number of the product can be smaller than the minimum factor
chromatic number by a fixed positive proportion~\cite{HeWigderson}.
Zhu improved the asymptotic ratio to at most one half~\cite{ZhuPoljakRodl}.
The refutation of the classical weak Hedetniemi conjecture is stronger
still. Raanes's construction~\cite{Raanes}, in the form discussed in
\cref{sec:weak}, keeps the product chromatic number bounded while both
factor chromatic numbers diverge.

Hedetniemi's conjecture has a natural formulation in terms of graph
homomorphisms, which also leads to analogous questions for other graph
parameters. A graph homomorphism is an adjacency-preserving
vertex map, and we write $G\to H$ when such a map exists.
In particular, a proper $n$-colouring is a homomorphism to $K_n$.
A graph parameter $p$ is homomorphism-monotone if $G\to H$ implies
$p(G)\le p(H)$. The coordinate
projections\footnote{The maps $(g,h)\mapsto g$ and $(g,h)\mapsto h$
preserve adjacency, giving graph homomorphisms $G\times H\to G$ and
$G\times H\to H$. Monotonicity of $p$ therefore yields both
$p(G\times H)\le p(G)$ and $p(G\times H)\le p(H)$.}
give $p(G\times H)\le\min\{p(G),p(H)\}$. Following
Simonyi~\cite{Simonyi}, we call the identity
\[
 p(G\times H)=\min\{p(G),p(H)\}
\]
the \emph{Hedetniemi-type equality} for $p$. The corresponding conjecture
asks whether it holds for every pair of graphs $G,H$. This identity holds for the
fractional chromatic number~\cite{Zhu} and the Lov\'asz theta number of
the complement~\cite{GodsilEtAl}, whereas it fails for the quantum
chromatic number~\cite{Zeiss}. For surveys of the conjecture and related
problems, see Zhu~\cite{ZhuSurvey1998,ZhuSurvey2025},
Sauer~\cite{SauerSurvey} and Tardif~\cite{TardifSurvey}.

In this work, we study the analogue of Hedetniemi's conjecture for
Shannon OR-capacity.
To define this parameter, we use the OR-product $G\OR H$, which has
vertex set $V(G)\times V(H)$ and requires adjacency in at least one
coordinate. This differs from the categorical product appearing in
Hedetniemi's conjecture. Write $G^{\OR r}$ for the OR-product of $r$
copies of $G$. The Shannon OR-capacity~\cite{Shannon} is
\begin{equation}\label{eq:capacity}
 \C(G)=\lim_{r\to\infty}\omega(G^{\OR r})^{1/r},
\end{equation}
where $\omega(G)\in\mathbb{N}$ denotes the number of vertices in a largest complete
subgraph of $G$ (the clique number), equivalently the largest size of a
set of pairwise adjacent vertices.
The limit exists by supermultiplicativity and Fekete's
lemma~\cite{Fekete,Shannon}. Unlike the integer-valued
chromatic number, the Shannon OR-capacity is real-valued and need not be
integral~\cite{Lovasz}.

Shannon OR-capacity originates in the problem of communicating over a
finite discrete memoryless channel without feedback, with zero
probability of decoding error~\cite{Shannon}.
When graph edges represent input symbols that cannot produce a common
output, $\omega(G^{\OR r})$ is the largest number of messages that can
be distinguished without error in $r$ channel uses. Consequently,
$\log_2\C(G)$ is the optimal zero-error rate in bits per use; see
Csonka and Simonyi~\cite[Sec.~I]{CsonkaSimonyi} for this formulation
in terms of distinguishability graphs and OR-products.

The same channel can also be described by its confusability graph
$\overline G$, the complement of $G$, in which two distinct input
symbols are adjacent when they can produce a common output.
This convention uses the strong
(AND) product. For finite graphs $G$ and $H$, the strong product
$G\boxtimes H$ has vertex set $V(G)\times V(H)$, with two distinct pairs
adjacent precisely when their entries are equal or adjacent in each
coordinate.
Let $\alpha(G)$ denote the \emph{independence number} of $G$, that
is, the largest size of a set of pairwise nonadjacent vertices.
Write $G^{\boxtimes r}$ for the strong product of $r$ copies of $G$.
The \emph{ordinary Shannon capacity}, also called
\emph{Shannon AND-capacity}, is~\cite{Shannon,Simonyi}
\[
 \Theta(G)=\lim_{r\to\infty}\alpha(G^{\boxtimes r})^{1/r}.
\]
Complementation turns cliques in $G^{\OR r}$ into independent sets in
$(\overline G)^{\boxtimes r}$, and conversely. Thus the two conventions
describe the same zero-error codes on complementary graphs, giving
$\C(G)=\Theta(\overline G)$.
Appendix~\ref{sec:products} illustrates the graph products and their
relationships.

Like the chromatic number, Shannon OR-capacity is monotone under graph
homomorphisms~\cite[Secs.~2 and~2.2]{ZhuSurvey2025}.
Simonyi asked whether it satisfies the Hedetniemi-type equality for
the categorical product~\cite{Simonyi}.

\begin{conjecture}[Shannon OR-capacity version of Hedetniemi's conjecture]
\label{conj:strong}
For every pair of graphs $G,H$,
\[
 \C(G\times H)=\min\{\C(G),\C(H)\}.
\]
\end{conjecture}

Simonyi established the conjectured equality whenever one factor admits
a homomorphism to the other, including every pair of odd cycles~\cite[Cor.~10 and
Claim~16]{Simonyi}. Equality also holds when both factors have capacity
equal to their clique number. The factors may be
chosen from perfect graphs, certain Kneser graphs,
and complements of Kneser graphs~\cite[Prop.~15]{Simonyi}.
He also proposed graph pairs based on Paley graphs as potential sources
of counterexamples to the Shannon OR-capacity conjecture~\cite[Sec.~4.3]{Simonyi}.

\Needspace{8\baselineskip}
By analogy with the classical weak Hedetniemi conjecture for graph
colouring, we formulate the following weaker asymptotic conjecture for
Shannon OR-capacity.

\begin{conjecture}[Weak Hedetniemi conjecture for Shannon OR-capacity]
\label{conj:weak}
Along any sequence of graph pairs, if both graph capacities tend to
infinity, then the capacity of their categorical product also tends
to infinity.
\end{conjecture}

Refuting this weaker conjecture would establish a much stronger failure
of the exact identity. It would give graph pairs whose capacities both
tend to infinity while their product capacity remains uniformly bounded.

Simonyi also connected \cref{conj:strong} to the asymptotic spectrum of
graphs in the OR convention.
Its parameters are homomorphism-monotone, additive under joins and
multiplicative under the OR-product. This family includes the fractional
chromatic number and the Lov\'asz theta number of the complement.
Via complementation, Zuiddam's characterization connects this family to capacity by
expressing Shannon OR-capacity as the minimum over the entire
spectrum~\cite{Zuiddam}. Simonyi used this characterization
to prove that every counterexample for Shannon OR-capacity also violates
the Hedetniemi-type equality for some parameter in the
spectrum~\cite[Thm.~8]{Simonyi}. We introduce the spectrum formally and
explain the tools used in our constructions in \cref{sec:spectral-argument}.

\subsection*{Our results}

We disprove \cref{conj:strong} even when one factor is a complete graph.
For every integer $m\ge7$, \cref{thm:finite} gives a graph of capacity
greater than $m$ whose categorical product with $K_m$ has capacity
strictly below $m=\C(K_m)$.
We first explain the mechanism through a simple example at the beginning of
\cref{sec:simple}. Our graph pairs differ from Simonyi's proposed
candidates. We also construct a counterexample with both
capacities below six (\cref{thm:sharper}(1)).
For every real threshold above an explicit constant slightly larger
than six, we construct two graphs whose capacities exceed that
threshold. Their product has capacity below it (\cref{thm:sharper}(2)).

We make progress on the asymptotic question posed by the weak
conjecture. \Cref{thm:asymptotic} shows that, with one factor still
complete, both graph capacities can tend to infinity while the product
capacity is subpolynomial in the smaller graph capacity. Whether the
product capacity can remain
uniformly bounded is still open (\cref{conj:weak}).

The interpolated spectral points used to bound the product capacities
also witness failures of the Hedetniemi-type equality within the
asymptotic spectrum.
Thus the equality can fail even for homomorphism-monotone parameters
that are additive under joins and multiplicative under the OR-product.
In fact, one fixed spectral point violates the equality with a complete
factor $K_m$ for every integer $m\ge7$ (\cref{cor:spectral}).

A further contribution is an operational formulation of the Shannon
OR-capacity version of Hedetniemi's conjecture. We show that the
categorical product is the distinguishability graph of a zero-error
communication task involving random switching between two channels
(\cref{prop:switch}). To our knowledge, this operational
interpretation of the Shannon OR-capacity version of Hedetniemi's
conjecture has not previously been made explicit.
Under this correspondence, the conjectured identity asserts that
switching preserves the smaller individual zero-error rate. Our
counterexamples therefore demonstrate that switching can impose an
additional loss, with the retained fraction of that rate tending to
zero in the asymptotic construction
(see~\cref{cor:finite-rates,cor:asym-rates}).

\paragraph{Methods.}
To obtain these counterexamples, we transfer spectral results for
ordinary Shannon capacity to the OR convention by complementation.
Zuiddam's characterization~\cite{Zuiddam} allows us to choose spectral
bounds that are sharp on individual components and to derive capacity
lower bounds from estimates valid for every spectral point. Vrana's
probabilistic refinement~\cite{Vrana} supplies the spectral interpolation
inequality~\eqref{eq:interpolate} of \cref{lem:spectral}, which combines
bounds effective on different components into a single spectral point.
This point bounds the
capacity of the entire categorical product from above. Combining this
bound with the lower bounds on the two factor capacities establishes
the capacity counterexample. The same spectral point also witnesses
failure of the Hedetniemi-type equality for that graph pair.
A complementary pair of binary orthogonality graphs
illustrates this mechanism. The sub-six refinement uses the same
interpolation principle with explicit rational matrix certificates,
compressed by graph symmetries. The asymptotic construction extends
the mechanism to many components using Alon's finite-field polynomial
method~\cite{Alon}.

\Needspace{10\baselineskip}
\paragraph{Organization of the paper.}
\Cref{sec:simple} presents the finite counterexamples: the simple
example and its spectral argument, the complete-factor family in
\cref{sec:integer-extension}, and the sharper finite thresholds in
\cref{sec:sharper}.
\Cref{sec:asymptotic} introduces the Shannon OR-capacity analogue of
the Poljak--R\"odl function, formulates the weak conjecture in these
terms, and develops the asymptotic separation.
\Cref{sec:spectral-consequences} collects the consequences for the
asymptotic spectrum. \Cref{sec:operational} gives an interpretation
in terms of zero-error communication, and \cref{sec:discussion}
discusses open questions.
Appendices~\ref{sec:notation}--\ref{sec:interpolation} provide
notation, graph-product conventions, proofs of the spectral tools and
general interpolation bounds.
Appendices~\ref{sec:finiteproof}--\ref{sec:thresholdproof} contain the
finite constructions, refinements and exact certificates.
Appendices~\ref{sec:asymproof} and~\ref{sec:channelproof} give the proofs
of the asymptotic separation and the channel interpretation, respectively.

\section{Finite counterexamples}\label{sec:simple}

We begin with a simple counterexample, proceeding in two steps. First, we construct
a graph $F_9$ and show
that $\C(F_9)\ge\sqrt{128}>9=\C(K_9)$, so the smaller factor capacity
is nine. We then outline a spectral argument giving
$\C(F_9\times K_9)\le\sqrt{72}<9$.
The categorical product therefore has smaller capacity than either
factor, contradicting \cref{conj:strong}.
We then refine the construction to obtain counterexamples at every
integer minimum $m\ge7$, yielding \cref{thm:finite}.
Finally, \cref{sec:sharper} develops further refinements for smaller
capacities and real separating thresholds.

Let $\F_2=\{0,1\}$ be the finite field with arithmetic modulo two.
Among the $2^8$ vectors in $\F_2^8$, exactly half contain an odd number
of ones. Let $U_8$ be the graph whose vertices are these 128 vectors,
with distinct $x,y$ adjacent when $x\cdot y=0$ in $\F_2$.
Its complement $\overline{U_8}$ joins exactly the distinct pairs that
$U_8$ does not join.

\Needspace{10\baselineskip}
In the communication interpretation, vertices are input symbols, and
adjacency means that two symbols can be distinguished without error
in one use. More generally, $\omega(X^{\OR r})$ is the largest number
of pairwise distinguishable words of length $r$ for a graph $X$.
Taking the $r$th root of this number and then the limit as
$r\to\infty$ gives $\C(X)$, as in~\eqref{eq:capacity}. Both $U_8$ and $\overline{U_8}$ have capacity
eight by \cref{lem:binary}.

Keep the two graphs as separate components, with no edges between
them, and write $F_9=U_8\sqcup\overline{U_8}$ for their disjoint union.
Each binary vector $x$ labels one vertex $x_{U_8}$ in the first
component and one vertex $x_{\overline{U_8}}$ in the second.
For each of the 128 labels, form the two-coordinate word
\[
 (x_{U_8},x_{\overline{U_8}}).
\]
Every word therefore uses the first component in its first coordinate
and the second component in its second coordinate.

Recall that $F_9^{\OR2}=F_9\OR F_9$ has two-coordinate words as
vertices, with two words adjacent when their corresponding entries
are adjacent in at least one coordinate. Take distinct labels $x,y$.
If $xy$ is an edge of $U_8$, the first coordinates of their words
are adjacent. Otherwise, $xy$ is an edge of $\overline{U_8}$, so
the second coordinates are adjacent. In either case, the words are
adjacent in $F_9^{\OR2}$ and can be distinguished without error.

Every pair among the 128 distinct words is therefore adjacent, so
these words form a complete subgraph $K_{128}$ (a clique of size 128)
of $F_9^{\OR2}$.
By supermultiplicativity and Fekete's lemma, the limit defining
$\C(X)$ equals $\sup_{r\ge1}\omega(X^{\OR r})^{1/r}$;
see~\eqref{eq:capacity-supremum} in Appendix~\ref{sec:spectrum}.
Thus any fixed word length gives a lower bound on capacity, and our
two-coordinate code gives
\[
 \omega(F_9^{\OR2})\ge128,
 \qquad
 \C(F_9)\ge\omega(F_9^{\OR2})^{1/2}\ge\sqrt{128}.
\]
Since $\C(K_9)=9<\sqrt{128}$, this establishes
\[
 \min\{\C(F_9),\C(K_9)\}=9.
\]
To refute \cref{conj:strong} for the two factors $F_9$ and $K_9$,
it remains to prove $\C(F_9\times K_9)<9$.
The spectral argument below uses graph parameters that bound capacity
from above: by combining two such parameters, we obtain one whose
value on both components of $F_9\times K_9$ is at most $\sqrt{72}<9$.

\subsection{The spectral argument}\label{sec:spectral-argument}

We first introduce the spectral tools needed for the product upper
bound. We then apply them to prove $\C(F_9\times K_9)<9$,
completing our simple counterexample.

For finite graphs $G$ and $H$, the \emph{join} $G\join H$ is
obtained from their disjoint union by adding all edges between
$V(G)$ and $V(H)$.

The asymptotic spectrum depends on the chosen operations and
comparison relation. Here we use graph join as addition, the
OR-product as multiplication, and graph homomorphisms for
comparison, following~\cite[Def.~6]{Simonyi}.

\Needspace{14\baselineskip}
\begin{definition}[OR asymptotic spectrum]
Let $\mathcal G$ denote the class of finite simple nonempty graphs.
The \emph{asymptotic spectrum of graphs in the OR convention},
denoted by $\spec$, consists of all graph invariants
$\varphi:\mathcal G\to\mathbb R_{\ge0}$ satisfying, for all
$G,H\in\mathcal G$,
\[
\begin{aligned}
 \varphi(K_1)&=1
 &&\text{(normalization)},\\
 \varphi(G\join H)&=\varphi(G)+\varphi(H)
 &&\text{(join additivity)},\\
 \varphi(G\OR H)&=\varphi(G)\varphi(H)
 &&\text{(OR multiplicativity)},\\
 G\to H&\implies\varphi(G)\le\varphi(H)
 &&\text{(homomorphism monotonicity)}.
\end{aligned}
\]
Its elements are called \emph{OR spectral points}.
\end{definition}

Since $K_m$ is the join of $m$ copies of $K_1$, normalization and
join additivity give $\varphi(K_m)=m$. Moreover, a clique of size
$a$ in $G^{\OR r}$ is a copy of $K_a$, whose inclusion gives a
homomorphism $K_a\to G^{\OR r}$. Thus
\[
 a=\varphi(K_a)\le\varphi(G^{\OR r})=\varphi(G)^r.
\]
Choosing $a=\omega(G^{\OR r})$, taking $r$th roots, and letting
$r\to\infty$ shows that every spectral point bounds capacity
from above: $\C(G)\le\varphi(G)$.

Zuiddam formulates the graph spectrum using disjoint union, the
strong product, and monotonicity under homomorphisms between
complements. Write $f$ for a spectral point in this convention.
The identities
\[
 \overline{G\join H}=\overline G\sqcup\overline H,
 \qquad
 \overline{G\OR H}=\overline G\boxtimes\overline H
\]
show that the assignment $\varphi(G)=f(\overline G)$ gives a
bijection between his spectrum and $\spec$. Cliques in $G^{\OR r}$
are independent sets in $(\overline G)^{\boxtimes r}$ and conversely,
so the capacity definitions give $\C(G)=\Theta(\overline G)$.
Applying Zuiddam's capacity characterization~\cite[Thm.~1.1(ii)]{Zuiddam}
to $\overline G$ gives~\eqref{eq:duality} below, including
attainment of the minimum.

The next lemma collects this characterization, the maximum rule
for disjoint unions, and a spectral interpolation inequality.
Here the maximum rule means that a fixed spectral point takes the
maximum of its values on the components of a disjoint union.
The latter two statements follow from Vrana's
results~\cite[Prop.~4.3 and Thms.~1.1--1.2]{Vrana}, translated
to the OR convention.

\Needspace{15\baselineskip}
\begin{lemma}[Spectral characterization and interpolation]\label{lem:spectral}
For every graph $X$,
\begin{equation}\label{eq:duality}
 \C(X)=\min_{\varphi\in\spec}\varphi(X).
\end{equation}
Every $\varphi\in\spec$ satisfies
\begin{equation}\label{eq:max}
 \varphi(X\sqcup Y)=\max\{\varphi(X),\varphi(Y)\}.
\end{equation}
The \emph{spectral interpolation inequality} states that if
$\varphi_1,\ldots,\varphi_k\in\spec$ and $w_j\ge0$ with
$\sum_jw_j=1$, then there is a single $\psi\in\spec$ such that
\begin{equation}\label{eq:interpolate}
 \psi(X)\le\prod_{j=1}^k\varphi_j(X)^{w_j}
 \qquad\text{for every graph }X.
\end{equation}
\end{lemma}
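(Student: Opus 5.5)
The plan is to obtain all three statements by translating results stated in the complementary (AND) convention, using the bijection $\varphi(G)=f(\overline G)$ described just before the lemma. First I will check that this dictionary is exact. Normalization is clear since $\overline{K_1}=K_1$. Join additivity corresponds to additivity under disjoint union via $\overline{G\join H}=\overline G\sqcup\overline H$. OR-multiplicativity corresponds to strong-product multiplicativity via $\overline{G\OR H}=\overline G\boxtimes\overline H$. Homomorphism monotonicity $G\to H$ corresponds verbatim to Zuiddam's order, which compares complements. With this in place, \eqref{eq:duality} is Zuiddam's \cite[Thm.~1.1(ii)]{Zuiddam} applied to $\overline X$, combined with $\C(X)=\Theta(\overline X)$. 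The inequality $\C(X)\le\varphi(X)$ was already derived above, so only attainment of the minimum needs the citation; it comes from compactness of the spectrum in Zuiddam's theorem.

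For the maximum rule \eqref{eq:max}, the lower bound is elementary. The inclusions $X\to X\sqcup Y$ and $Y\to X\sqcup Y$ are homomorphisms, so monotonicity gives $\varphi(X\sqcup Y)\ge\max\{\varphi(X),\varphi(Y)\}$. The upper bound is the nontrivial direction. In the AND convention it reads $f(\overline X\join\overline Y)=\max\{f(\overline X),f(\overline Y)\}$, which is Vrana's \cite[Prop.~4.3]{Vrana}, so I will cite it after translating. An elementary attempt gives only something weaker. The maps $x\mapsto(x,y_0)$ and $y\mapsto(x_0,y)$ define a homomorphism $X\sqcup Y\to X\OR Y$, whence $\varphi(X\sqcup Y)\le\varphi(X)\varphi(Y)$. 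This is not the maximum, so I would not try to sharpen it by a power trick and will rely on the citation instead.

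For the interpolation inequality \eqref{eq:interpolate}, consider the function $\Phi(X)=\prod_j\varphi_j(X)^{w_j}$. It is normalized, OR-multiplicative and homomorphism-monotone. By H\"older's inequality, $\prod_j(a_j+b_j)^{w_j}\ge\prod_ja_j^{w_j}+\prod_jb_j^{w_j}$, so $\Phi$ is superadditive under joins. The required step is then a Strassen-type domination statement: a normalized, monotone, multiplicative and join-superadditive functional lies pointwise above some genuine spectral point $\psi$. I would take this from Vrana's \cite[Thms.~1.1--1.2]{Vrana}. There, spectral points arise from the probabilistic refinement $f(G,P)$ and are controlled by a log-convexity property in the distribution argument, which yields a point below any weighted geometric mean. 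After translation, this gives $\psi\in\spec$ with $\psi\le\Phi$ on every graph.

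I expect the main obstacle to be this last step: matching precisely what Vrana's theorems assert to the form \eqref{eq:interpolate}. This includes checking the hypotheses on the $\varphi_j$, which must be arbitrary spectral points and not just points of a special subfamily. It also includes extending from $k=2$ to general $k$, which I would do by induction, merging two points at a time with renormalized weights. Finally, the direction of homomorphism monotonicity must survive complementation. If Vrana's statement is only for two points, the induction handles general $k$, since the inequality for the merged point composes multiplicatively with the remaining weights.
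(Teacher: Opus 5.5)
Your treatment of \eqref{eq:duality} and \eqref{eq:max} is the paper's. Both are translations by complementation of Zuiddam's Thm.~1.1(ii) and Vrana's Prop.~4.3, and your elementary half of the maximum rule is a harmless extra.

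The interpolation step is where the proposal goes wrong. The ``Strassen-type domination statement'' you single out as the required step is not a general property of normalized, monotone, multiplicative, join-superadditive functionals. It is also not what Vrana proves. It already fails in the Strassen preordered semiring $\{(a,b)\in\mathbb N^2:a,b\ge1\}\cup\{(0,0)\}$ with componentwise operations and order. There the spectrum is $\{\pi_1,\pi_2\}$. The function $\sqrt{\pi_1\pi_2}$ is normalized, multiplicative, monotone and superadditive (by Cauchy--Schwarz), yet it lies above neither projection: test it at $(2,1)$ and $(1,2)$.

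For graphs the domination statement does hold. However, given \eqref{eq:duality} and compactness of $\spec$, it is equivalent to \eqref{eq:interpolate} itself. So the H\"older observation reduces nothing, and all of the content sits in the citation. You describe that citation only vaguely, and you misplace it: the relevant convexity is not ``in the distribution argument''.

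The missing mechanism, which is what the paper uses, is convexity in the spectral point. By Vrana's Theorems~1.1--1.2, every spectral point $f_j(Y)=\varphi_j(\overline Y)$ corresponds bijectively to its logarithmic probabilistic refinement $F_j(Y,P)$, with $f_j(Y)=\max_P2^{F_j(Y,P)}$. This applies to arbitrary spectral points, so your worry about special subfamilies does not arise. The conditions characterizing refinements are affine in $F$, and affine conditions survive convex combinations with $\sum_jw_j=1$. Hence $F=\sum_jw_jF_j$ is the refinement of a single spectral point $f$, and $\psi(X)=f(\overline X)$ satisfies
\[
 \psi(X)=\max_P\prod_j2^{w_jF_j(\overline X,P)}
 \le\prod_j\Bigl(\max_P2^{F_j(\overline X,P)}\Bigr)^{w_j}
 =\prod_j\varphi_j(X)^{w_j}.
\]
It is the maximization over $P$ that turns the affine average into the inequality. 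This handles all $k$ at once, so no induction on $k$ is needed (though yours would be valid). It also never uses superadditivity of $\prod_j\varphi_j^{w_j}$.
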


The proof of \cref{lem:spectral} is given in Appendix~\ref{sec:spectrum}.

We use these three statements together to bound the capacity of a
disjoint union. Equation~\eqref{eq:duality} lets us choose, for each
graph $X$, a spectral point $\varphi$ with $\varphi(X)=\C(X)$; this
choice may depend on $X$. Thus different components may have
different spectral points giving their best bounds.
The spectral interpolation inequality~\eqref{eq:interpolate} combines these points into one
spectral point $\psi$ whose values are bounded by the corresponding
weighted geometric means. Because the same $\psi$ works on every
component, the maximum rule~\eqref{eq:max} gives
\[
 \C(X\sqcup Y)\le\psi(X\sqcup Y)
 =\max\{\psi(X),\psi(Y)\}.
\]
Hence a common upper bound on $\psi(X)$ and $\psi(Y)$ also bounds
the capacity of their disjoint union.

Equation~\eqref{eq:duality} also gives lower bounds: if
$\varphi(X)\ge L$ for every $\varphi\in\spec$, then taking the
minimum gives $\C(X)\ge L$. We use this observation in
\cref{lem:joined-complement}.

\Needspace{15\baselineskip}
\par\medskip\noindent
We now return to the explicit simple counterexample introduced at
the beginning of \cref{sec:simple}, with
$F_9=U_8\sqcup\overline{U_8}$ and second factor $K_9$.
It remains to prove $\C(F_9\times K_9)<9$.
Since $\C(U_8)=\C(\overline{U_8})=8$, the attained minimum
in~\eqref{eq:duality} lets us choose spectral points
$\varphi_1,\varphi_2\in\spec$ satisfying
\[
 \varphi_1(U_8)=\varphi_2(\overline{U_8})=8.
\]
On each component product, the projections to its two factors give
the following bounds.
\begin{center}
\begin{tabular}{lcc}
\toprule
Component & Upper bound for $\varphi_1$ & Upper bound for $\varphi_2$ \\
\midrule
$U_8\times K_9$ & $8$ & $9$ \\
$\overline{U_8}\times K_9$ & $9$ & $8$ \\
\bottomrule
\end{tabular}
\end{center}
The two rows of the table interchange the bounds $8$ and $9$, so
we choose equal interpolation weights to obtain the same bound on
both components. Applying the spectral interpolation
inequality~\eqref{eq:interpolate} with $k=2$ and
$w_1=w_2=\tfrac12$ gives a single spectral point $\psi\in\spec$
such that
\[
 \psi(X)\le\varphi_1(X)^{1/2}\varphi_2(X)^{1/2}
 =\sqrt{\varphi_1(X)\varphi_2(X)}
 \qquad\text{for every graph }X.
\]
Applying this inequality to the two component products and using
the bounds in the table gives
\[
\begin{alignedat}{2}
 \psi\bigl(U_8\times K_9\bigr)
 &\le\sqrt{8\,\cdot\,9} &&=\sqrt{72},\\[4pt]
 \psi\bigl(\overline{U_8}\times K_9\bigr)
 &\le\sqrt{9\,\cdot\,8} &&=\sqrt{72}.
\end{alignedat}
\]
Both estimates hold for the same spectral point $\psi$, because
the interpolation inequality holds for every graph $X$.
The categorical product distributes over disjoint union. Since
$F_9=U_8\sqcup\overline{U_8}$, we have
\[
 F_9\times K_9
 =(U_8\times K_9)\sqcup(\overline{U_8}\times K_9).
\]
Indeed, the product vertices split according to which component of
$F_9$ contains their first coordinate. Adjacency in the categorical
product requires adjacency between the first coordinates, so no edge
can join these two groups. The maximum rule~\eqref{eq:max} therefore gives
\begin{equation}\label{eq:nine-base}
 \C(F_9\times K_9)\le\psi(F_9\times K_9)\le\sqrt{72}<9.
\end{equation}
Since nine is the smaller factor capacity, this disproves
\cref{conj:strong}.

\begin{remark}[Role of interpolation]
The same spectral point must control both components: the maximum
rule applies to each fixed spectral point, but not generally to
capacity. Indeed, $F_9$ itself illustrates the distinction. Its two
components each have capacity $8$, whereas
$\C(F_9)\ge\sqrt{128}>8$.
The same argument gives a general product bound: for arbitrary graphs
$A,B$ and every integer $m\ge1$,
\[
 \C\bigl((A\sqcup B)\times K_m\bigr)
 \le\sqrt{m\max\{\C(A),\C(B)\}}.
\]
The weighted form of this bound is proved in \cref{prop:two}
in \suppref{sec:interpolation}.
\end{remark}

\begin{remark}[Connected factors]
Adding one vertex adjacent to every vertex of $F_9$ gives a connected
graph $G=F_9\join K_1$ on 257 vertices. Using \cref{prop:nine}, \cref{prop:two} with $m=10$, and \cref{lem:connected}, we obtain
\[
 \C(G)=\sqrt{128}+1>10,
 \qquad
 \C(G\times K_{10})\le\sqrt{80}+1<10=\C(K_{10}).
\]
Thus both factors in this counterexample are connected.
Disconnectedness is therefore not essential to the failure of the
product identity.
\end{remark}

\subsection{A family of counterexamples with a complete factor}\label{sec:integer-extension}

We now refine the simple counterexample to obtain a family with complete
factor $K_m$ and smaller factor capacity exactly $m$ for every integer
$m\ge7$. The proof follows essentially the same steps as above, using
modified components and weighted spectral interpolation.

The starting case is $m=7$, the smallest integer minimum attained by
our constructions. For this example, we replace $U_8$ by the analogous
graph $U_6$ on the odd-weight vectors in $\F_2^6$. Letting $Q$ denote
the 5-regular Clebsch graph on 16 vertices~\cite{GreenwoodGleason}, we
form the first factor $X_7=U_6\sqcup(\overline{U_6}\join Q)$
by joining $Q$ to $\overline{U_6}$ and taking the disjoint union with
$U_6$. The second factor is $K_7$.
\Cref{lem:clebsch,lem:joined-complement} supply the additional capacity
estimates and the lower bound for the enlarged union. Together with
the weighted product bound in \cref{prop:two}, derived from spectral
interpolation~\eqref{eq:interpolate}, these establish the counterexample
with $K_7$. For $m>7$, let $X_m$ be obtained by joining
$K_{m-7}$ to each of the two components of $X_7$. This gives
a family $X_m$, with $2m+66$ vertices for each integer $m\ge7$,
establishing the following result.

\Needspace{8\baselineskip}
\begin{theorem}[Counterexamples with a complete factor]
\label{thm:finite}
For every integer $m\ge7$, there exists a graph $G$ such that
\[
 \C(G\times K_m)\le m-\tfrac14<m=\C(K_m)<\C(G).
\]
\end{theorem}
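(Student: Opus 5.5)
The graph is the family $X_m$ described just before the statement. Write $A_m=U_6\join K_{m-7}$ and $B_m=(\overline{U_6}\join Q)\join K_{m-7}$, so that $X_m=A_m\sqcup B_m$. The plan is to prove the lower bound $\C(X_m)>m$ and the upper bound $\C(X_m\times K_m)\le m-\tfrac14$ separately. The upper bound will come from the weighted spectral interpolation of \cref{prop:two}, exactly as in the $F_9$ argument. The value $\C(K_m)=m$ is immediate.

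\emph{Upper bound.} By \cref{lem:binary}, $\C(U_6)=\C(\overline{U_6})=6$. The attained minimum in~\eqref{eq:duality} gives $\varphi_1$ with $\varphi_1(U_6)=6$, and $\varphi_2$ attaining $\C(\overline{U_6}\join Q)$. Join additivity gives
\[
 \varphi_1(A_m)=m-1,
 \qquad
 \varphi_2(B_m)=\C(\overline{U_6}\join Q)+m-7=:m-\delta .
\]
Here I expect \cref{lem:clebsch} to give $\C(\overline{U_6}\join Q)\le 7-\delta$ with some explicit $\delta\ge\tfrac13$. On the other entries I only use the projection bounds $\varphi_j(\,\cdot\times K_m)\le m$. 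For weights $w$ and $1-w$ on $\varphi_1$ and $\varphi_2$, \eqref{eq:interpolate} produces $\psi\in\spec$ with
\[
 \psi(A_m\times K_m)\le (m-1)^w m^{1-w}\le m-w,
 \qquad
 \psi(B_m\times K_m)\le m^w(m-\delta)^{1-w}\le m-(1-w)\delta .
\]
The second inequality in each line is weighted AM--GM, so the bounds are uniform in $m$. The categorical product distributes over $\sqcup$, so the maximum rule~\eqref{eq:max} gives $\C(X_m\times K_m)\le m-\min\{w,(1-w)\delta\}$. Choosing $w=\delta/(1+\delta)$ gives $m-\delta/(1+\delta)\le m-\tfrac14$ as soon as $\delta\ge\tfrac13$. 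This is the content of \cref{prop:two} in weighted form.

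\emph{Lower bound.} I would mimic the $128$-word code. \cref{lem:joined-complement} should assert that $\C(U_6\sqcup(\overline{U_6}\join Q))>7$. Its proof would combine the two-coordinate code pairing $x_{U_6}$ with $x_{\overline{U_6}}$ (which alone gives $\sqrt{32}$) with extra words using $Q$ in the second coordinate. It would then use the spectral lower-bound principle stated after \cref{lem:spectral}. For $m>7$, the code must be adapted: the vertices of $K_{m-7}$ in each component are adjacent to everything in their component. So I would build words of length two in $X_m^{\OR2}$ from the $X_7$ code plus words using the new clique vertices in either coordinate. I would then verify that the resulting lower bound exceeds $m$, for instance by checking that the capacity lower bound grows by at least $1$ per added clique vertex, in the spirit of \cref{lem:connected}.

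\emph{Main obstacle.} The delicate step is the pair of estimates for $Q$. The first is the upper bound $\C(\overline{U_6}\join Q)\le 7-\delta$ with $\delta\ge\tfrac13$, which must come from a single spectral point, such as a theta-type bound computed using the symmetry of the Clebsch graph. The second is showing that $Q$ nevertheless lifts $\C(X_7)$ strictly above $7$. These pull in opposite directions, and the margins are small. I would first try the Lov\'asz theta of the complement (and its value on $\overline{U_6}$, $Q$) for the upper bound, and an explicit clique in $X_7^{\OR2}$ or $X_7^{\OR3}$ for the lower bound.
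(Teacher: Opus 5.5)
Your upper bound follows the paper's argument. With weights $w=\delta/(1+\delta)$ and $1-w$, weighted AM--GM gives $m-\tfrac14$ exactly when $\delta=\tfrac13$, which is the paper's interpolation with weights $\tfrac14,\tfrac34$.

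However, you misquote \cref{lem:binary}. It gives $\C(\overline{U_6})=\vartheta(U_6)=2^{6/2-1}=4$, not $6$; the two capacities coincide only for $r=8$, the case of $F_9$. With your value, the estimate you need would be impossible. Every spectral point is join-additive, so $\C(\overline{U_6}\join Q)\ge\C(\overline{U_6})+\C(Q)\ge6+16^{1/3}$. With the correct value, the estimate follows from the two lemmas and join additivity of the single spectral point $\overline\vartheta$:
\[
 \C(\overline{U_6}\join Q)\le\overline\vartheta(\overline{U_6})+\overline\vartheta(Q)\le4+\tfrac83=\tfrac{20}{3},
\]
so $\delta=\tfrac13$. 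The paper phrases this as one orthonormal representation whose combined handle has squared overlap $3/20$.

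The genuine gap is the lower bound $\C(X_m)>m$. You mention the spectral lower-bound principle, but you never supply the inequalities that make it work. The code-based route you actually propose does not, as described, deliver the bound.

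\emph{The case $m=7$.} The natural length-two code you describe has only $44$ words: the 32 diagonal words, plus the $\omega(U_6)\omega(Q)=12$ words $(a,q)$ with $a$ in a $6$-clique and $q$ in an edge of $Q$. This gives only $\sqrt{44}<7$. A length-two certificate would need $50$ words and a length-three one $344$, and nothing in your sketch produces them. In the paper, the bound $\rho=(s+\sqrt{s^2+128})/2\approx7.055$ with $s=16^{1/3}$ comes from minimizing over the spectrum, not from a code. Its margin above $7$ is small.

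\emph{The case $m>7$.} Here your plan is further off. We have $X_m\ne X_7\join K_{m-7}$, because a separate clique is joined to each component, so \cref{lem:connected} does not apply. A length-two code would also have to exceed $m^2$ words.

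\emph{The missing step} is to bound every spectral point from below. For $\varphi\in\spec$, put $x=\varphi(U_6)$ and $y=\varphi(\overline{U_6})$. The diagonal clique in $U_6\OR\overline{U_6}$ gives $xy\ge32$. The length-three code of \cref{lem:clebsch} gives $\varphi(Q)\ge\C(Q)\ge s$. Join additivity and the maximum rule then give
\[
 \varphi(X_m)=\max\{x,\;y+\varphi(Q)\}+(m-7)\ge r+(m-7),\qquad r:=\max\{x,\;y+s\}.
\]
Since $32\le xy\le r(r-s)$, this forces $r\ge\rho$. From $(7s)^3=5488>17^3$ we get $7(7-s)<32$, so $\rho>7$. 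Minimizing over $\varphi$ via \eqref{eq:duality} yields $\C(X_m)\ge\rho+m-7>m$. The added cliques cost nothing, since they raise every spectral point by exactly $m-7$.
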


The full construction and proof of \cref{thm:finite} are given in
\cref{sec:seven}.

\FloatBarrier
\subsection{Sharper finite thresholds}\label{sec:sharper}

In \cref{thm:finite}, the complete factor $K_m$ has capacity $m$, while
the other factor has larger capacity. The smaller factor capacity is
therefore exactly $m$, and the construction works for every integer
$m\ge7$. We now drop the requirement that one factor be complete and show
that the Hedetniemi identity can fail even when both factor capacities
are below six. We also obtain counterexamples across a
continuous range: for every real $t\ge6.0101$, there are graphs whose
factor capacities both exceed $t$, while their product capacity is
strictly below $t$. We call such a value $t$ a \emph{separating threshold}
for the pair.

\begin{theorem}[Sharper finite thresholds]\label{thm:sharper}
\leavevmode
\begin{enumerate}
\item There exist graphs $G,H$ with $\C(G),\C(H)<6$ such that
\[
 \C(G\times H)<\sqrt{27}
 \le\min\{\C(G),\C(H)\}.
\]
\item For every real $t\ge6.0101$, there exist graphs $G,H$ such that
\[
 \C(G\times H)<t<\min\{\C(G),\C(H)\}.
\]
\end{enumerate}
\end{theorem}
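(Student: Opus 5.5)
Both parts will be proved with the mechanism of \cref{sec:spectral-argument}, now with neither factor complete. We take $G$ to be a disjoint union of components and $H$ similarly, so that $G\times H$ splits into component products, and we bound one interpolated spectral point on each of them.

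\emph{Part (1).} We use the six-dimensional binary orthogonality graphs. By \cref{lem:binary}, the analogue for $n=6$ gives $\C(U_6)=\C(\overline{U_6})=6$. In the construction of $F_9$, the pair code gives $\C(U_6\sqcup\overline{U_6})\ge\sqrt{32}$, which is too large for part (1). We therefore need components of capacity just above $\sqrt{27}$ but below $6$.

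The plan is to take both $G$ and $H$ of the form $A\sqcup B$, where $A$ and $B$ are graphs carrying spectral points $\varphi_1,\varphi_2$ that are sharp on different components. Candidates are $U_6$, $\overline{U_6}$, the Clebsch graph $Q$ of \cref{lem:clebsch}, and joins of these with small cliques. The target inequality is
\[
\C(A)\,\C(B)\ge 27,
\]
which, via the two-coordinate code as for $F_9$, gives $\C(A\sqcup B)\ge\sqrt{27}$. On each of the four component products $A\times A'$ we then bound $\varphi_j$ by projection to whichever factor is cheaper for $\varphi_j$.

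We interpolate with \eqref{eq:interpolate} and use \eqref{eq:max}. Projections alone give only geometric means of factor values. The extra saving has to come from evaluating $\varphi_j$ directly on a product such as $U_6\times\overline{U_6}$, where the bound is smaller than either factor. This is where the explicit rational matrix certificates enter. They are Lov\'asz-theta-type or Haemers-rank-type bounds on $\overline{A\times A'}$, verified exactly and reduced by the automorphism groups, and they certify values below $\sqrt{27}$ for an interpolated point. The upper bound $\C<6$ on each factor follows by applying a single interpolated point to both components, giving at most $6$, with strictness from a certificate.

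\emph{Part (2).} For each real $t\ge 6.0101$, we start from a fixed base pair with a separating gap and shift it using joins with cliques and weighted interpolation, as in the passage from $X_7$ to $X_m$. Joining $K_s$ to every component adds $s$ to every spectral value by join additivity, and by \cref{lem:connected} or \cref{lem:joined-complement} it also adds $s$ to the capacity lower bounds.

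To cover non-integer $t$ continuously, we vary the interpolation weight $w$ in \cref{prop:two}, together with OR-powers or a parametrized component such as joining $K_s$ to only one side. The resulting product bound, of the form $\max\{a^w b^{1-w},\dots\}$, varies continuously with $w$, while the factor lower bounds stay above it. The constant $6.0101$ is then the smallest $t$ at which the base certificates leave room for this continuous family. Finitely many base pairs cover an interval of length at least $1$, and integer clique shifts extend this to all larger $t$.

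\emph{Main obstacle.} The hard step is finding and verifying the explicit spectral certificates for part (1), namely upper bounds on the component products that beat the projection bounds, checked exactly via symmetry-reduced rational matrices. For the intervals in part (2), a secondary difficulty is checking that the continuous family leaves no gaps.
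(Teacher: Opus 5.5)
\textbf{Part (1) has a genuine gap.} Your target $\C(A)\,\C(B)\ge27$ forces one component, say $B$, to satisfy $\C(B)\ge\sqrt{27}$. Since also $\C(H)\ge\sqrt{27}$, every spectral point satisfies $\min\{\varphi_j(B),\varphi_j(H)\}\ge\sqrt{27}$. Hence no interpolation of projection bounds can bring $\psi(B\times H)$ below $\sqrt{27}$. You noticed this and appealed to direct certificates on products. But a certificate $\C(B\times H)<\sqrt{27}\le\min\{\C(B),\C(H)\}$ is already a counterexample of the kind you are trying to build, so the decomposition gives no leverage, and you provide no such certificate. Theta-type certificates cannot help there at all, because $\overline\vartheta$ satisfies the Hedetniemi identity.

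The missing idea is the opposite design. Every component of $G$ should have capacity strictly below $\sqrt{27}$, and $\C(G)\ge\sqrt{27}$ should come only from the diagonal code of a complementary pair on the same $27$ labels. That code gives $\C(B\sqcup\overline B)\ge\sqrt{27}$ even when $\C(B)\,\C(\overline B)<27$. The paper takes $B$ to be the orthogonality graph of $27$ listed vectors in $\F_3^5$ and sets $G=B\sqcup\overline B$. A rank-five representation gives $\phif{\F_3}(B)\le5$, and an exact Gram certificate gives $\overline\vartheta(\overline B)\le b\approx5.19004<\sqrt{27}$. The partner $H$ is a single $27$-vertex graph with its own $27$-word square code, $\overline\vartheta(H)\le q\approx5.19656$, and $\phif{\F_3}(H)\le11/2$ from a double cover by eleven independent sets. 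Projections alone then give
\[
\C(G\times H)\le\max\{5^\lambda q^{1-\lambda},(11/2)^\lambda b^{1-\lambda}\}<5.193957
\]
at $\lambda=13/1000$. All the exact symmetry-reduced certificates concern $B$, $\overline B$ and $H$, not products; they also give $\C(G)\le16/3$ and $\C(H)\le q$.

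Two slips as well. \Cref{lem:binary} gives $\C(\overline{U_6})=2^2=4$, not $6$. And $\sqrt{32}<6$ is not ``too large''; the real obstruction to using $U_6$ here is $\C(U_6)=6>\sqrt{27}$.

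\textbf{Part (2) is only an outline.} Your last sentence matches the paper's skeleton: finitely many pairs whose separating intervals overlap and cover an interval of length greater than one, then joining $K_j$ to both components of the first factor and to the second factor. Weighted AM--GM turns the interpolated bound $U$ into exactly $U+j$.

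However, varying the interpolation weight does not generate new thresholds. A fixed pair separates exactly the interval $\bigl(\C(G\times H),\min\{\C(G),\C(H)\}\bigr)$, and the weight only affects how well you bound the left endpoint. Continuity must come from overlapping intervals of different pairs, and you supply no pairs and no overlap checks.

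In the paper, the base pair is $U_6\sqcup(\overline{U_6}\join K_1)$ against the self-complementary circulant $R$ on $\mathbb Z/37\mathbb Z$, which has $\C(R)=\sqrt{37}$ and $\chi_f(R)=37/6$. Optimizing the weight for this pair gives $T_0\approx6.01009$, which is the actual source of the constant $6.0101$; your description of it is not an argument. An eight-row table of pairs $U_6\sqcup(\overline{U_6}\join J)$ then uses partners such as $R$, $K_4\join C_5$, $K_2\join\overline{C_9}$, $3C_5$, $K_7$ and $K_1\join R$. Each row is verified to give an interval $(U,L)$, and the consecutive overlaps, the smallest being $1/1480$, are checked exactly, so together they cover $(541/90,7.02)$.
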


Parts~(1) and~(2) are proved in Appendix~\ref{sec:subsixproof}
(\cref{subsix:thm:main}) and Appendix~\ref{sec:thresholdproof}
(\cref{thm:real-thresholds}), respectively.

\begin{remark}
By \cref{lem:integer-minimum}, \cref{thm:sharper}(2) already implies
that counterexamples exist with smaller factor capacity exactly $m$
for every integer $m\ge7$. The construction for \cref{thm:finite}
additionally provides a complete factor $K_m$, a partner on $2m+66$
vertices, and product capacity at most $m-\tfrac14$.
Allowing noncomplete factors lowers the smaller factor capacity to a
value close to $\sqrt{27}\approx5.196$, but the smallest exact integer
minimum attained by our constructions remains seven: these refinements
do not settle the cases $3,4,5,6$.
\end{remark}

The refinements in \cref{thm:sharper} strengthen the two ingredients
needed for a counterexample: lower bounds on the factor capacities and
an upper bound on the product capacity. For part~(1), explicit codes of
$27$ words in two coordinates show that both factor capacities are at
least $\sqrt{27}$. Exact matrix certificates provide the spectral
estimates that interpolation~\eqref{eq:interpolate} combines to place
the product capacity strictly below this value. The same certificates
bound both factor capacities below six, giving a counterexample at
smaller capacities than the complete-factor construction.
The graphs $G,H$ have $54$ and $27$ vertices, respectively, and the
certificates give the sharper estimate $\C(G\times H)<5.193957$.
Thus this pair separates every threshold $t\in[5.193957,\sqrt{27})$.
The graph and certificate data, together with their exact verification,
are given in \suppref{sec:subsixproof}.

For part~(2), the aim is to cover an entire range of thresholds. A single
graph pair can already separate a whole interval: bounds of the form
\[
 \C(G\times H)\le U<L
 \le\min\{\C(G),\C(H)\}
\]
show that the same pair separates every threshold $t\in(U,L)$. Its
product capacity is below $t$, while both factor capacities are above
$t$. We call $(U,L)$ a \emph{range of separating thresholds} for the pair.

We first choose the interpolation weight for an initial graph pair to
minimize the larger of the two component bounds. This gives a product
bound below $6.0101$, while the smaller factor capacity is $\sqrt{37}$.
The pair therefore separates every threshold in $[6.0101,\sqrt{37})$.
We choose several graph pairs whose ranges of separating thresholds
overlap, so that every threshold in a continuous interval is covered.
\Cref{lem:threshold-padding} extends these examples by joining a copy
of $K_j$ to each of the two components of the first factor and to the
second factor. For these constructions, a range $(U,L)$ becomes
$(U+j,L+j)$ for every integer $j\ge0$. The original covered interval
has length greater than one, so shifting it repeatedly by one leaves
no gaps in the covered thresholds. This construction therefore extends
the coverage to every threshold above the interval's lower endpoint.
Combining this coverage with the initial pair yields counterexamples
for every $t\ge6.0101$.

These constructions do not connect the uniform range to the interval
near $\sqrt{27}$. They establish strict capacity gaps for finite
examples. We next ask how small the product capacity can remain as
both factor capacities grow.

\section{Asymptotic separation and the weak conjecture}\label{sec:asymptotic}

Motivated by the classical weak Hedetniemi conjecture for graph
colouring, we formulate an analogous conjecture for Shannon OR-capacity.
It asks whether the capacity of the categorical product must tend to
infinity whenever both factor capacities do. We introduce the
corresponding analogue of the Poljak--R\"odl function and prove a
subpolynomial upper bound, leaving this conjecture open.

\subsection{Extremal functions and the weak conjecture}\label{sec:weak}

The classical weak Hedetniemi conjecture concerns the Poljak--R\"odl
function~\cite{PoljakRodl}
\[
 f_\chi(n)=\min\{\chi(G\times H):\chi(G)\ge n,\ \chi(H)\ge n\},
\]
and asks whether $f_\chi(n)\to\infty$. Raanes proposed a refutation
using products of bounded chromatic number~\cite{Raanes}.
A correction to Raanes's argument preserves his general proof strategy
and gives $f_\chi(n)\le5$ for every $n$, so the product remains
five-colourable while both factor chromatic numbers grow without
bound.\footnote{The author discovered the gap incidentally while working
on the quantum version of Hedetniemi's conjecture and communicated both
the gap and its correction directly to Raanes. The correction yields a
five-colour bound rather than the four-colour bound asserted in the
original preprint, and thus still refutes the classical weak Hedetniemi
conjecture. The author expects Raanes to revise his draft and does not
currently intend to publish the correction separately.}
This conclusion does not transfer directly to Shannon OR-capacity.
Although $\C(G)\le\chi(G)$, a large chromatic number does not by itself
establish a large capacity.

For real $t\ge1$, define the Shannon OR-capacity analogue
\begin{equation}\label{eq:extremal}
 f_C(t)=\inf\bigl\{\C(G\times H):\C(G)\ge t,\ \C(H)\ge t\bigr\}.
\end{equation}
Here $t$ is a lower bound on both factor capacities; neither capacity
is required to equal $t$. The function $f_C$ is nondecreasing, as
verified in \suppref{sec:spectrum}. In terms of this function, \cref{conj:weak} takes the form
\[
 \lim_{t\to\infty}f_C(t)=\infty.
\]
Equivalently, for every $B>0$ there is a $T>0$ such that, for all
graphs $G,H$,
\begin{equation}\label{eq:weak-uniform}
 \min\{\C(G),\C(H)\}\ge T
 \quad\Longrightarrow\quad \C(G\times H)>B.
\end{equation}
The equivalence of these formulations is justified in
\suppref{sec:spectrum}.

\subsection{Asymptotic separation}\label{sec:asymptotic-bound}

Although we do not refute the weak Hedetniemi conjecture for Shannon
OR-capacity, we make progress in this direction by proving an asymptotic
separation: both factor capacities can tend to infinity while the product
capacity is subpolynomial in the smaller factor capacity. This holds even
when both factors are connected and one is complete.

\begin{theorem}[Asymptotic separation]\label{thm:asymptotic}
For every sufficiently large integer $m$, there is a connected graph
$G_m$ satisfying
\begin{equation}\label{eq:asym-factor}
 \C(G_m)\ge m+1,
 \qquad \C(G_m\times K_m)\le
 \exp\bigl((\log m)^{1/2+o(1)}\bigr).
\end{equation}
The smaller factor capacity is therefore exactly $m$. More precisely,
there are absolute constants $K>0$ and $m_0$ such that these graphs can
be chosen, for every integer $m\ge m_0$, to satisfy
\begin{equation}\label{eq:asym-product}
 \C(G_m\times K_m)\le
 \exp\!\left(K\sqrt{\log m}\,
       \frac{(\log\log m)^2}{\log\log\log m}\right).
\end{equation}
Consequently, for every $\delta>0$ and all sufficiently large real $t$,
\begin{equation}\label{eq:clean-asym}
 f_C(t)\le \exp\!\bigl((\log t)^{1/2+\delta}\bigr).
\end{equation}
\end{theorem}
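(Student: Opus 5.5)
We will generalize the $F_9$ mechanism from two components to many components. Fix parameters $k$ (number of components) and a prime power $q$, and build graphs $A_1,\dots,A_k$ with the following properties.
\begin{itemize}
\item Each $\C(A_j)$ is small, at most some $s$.
\item There is a code of length $k$ using component $A_j$ in coordinate $j$ that is a clique of size $N$ in $A_1\OR\cdots\OR A_k$, so that $\C(A_1\sqcup\cdots\sqcup A_k)\ge N^{1/k}$.
\end{itemize}
The natural source is Alon's finite-field polynomial construction~\cite{Alon}. Its vertices are vectors over $\F_q$ (or over $\F_p$ in several primes), and adjacency in the $j$th graph is defined through a polynomial orthogonality condition. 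Every pair of distinct labels is then adjacent in at least one of the $k$ graphs, exactly as $U_8,\overline{U_8}$ cover all pairs. Meanwhile each single graph has capacity bounded by a dimension/rank argument in the appropriate spectral point, such as the Haemers-type rank bound over the corresponding field, transferred to $\spec$ by complementation. We set $G_m=(A_1\sqcup\cdots\sqcup A_k)\join K_1$ (or join a suitable clique) to make it connected, using \cref{lem:connected} as in the connected-factor remark. The parameters are tuned so that $N^{1/k}\ge m+1$.

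For the upper bound, $G_m\times K_m$ is, up to the join correction, the disjoint union $\bigsqcup_j A_j\times K_m$. For each $j$ pick $\varphi_j\in\spec$ with $\varphi_j(A_j)=\C(A_j)\le s$ via \eqref{eq:duality}. Then $\varphi_j(A_j\times K_m)\le s$, and $\varphi_i(A_j\times K_m)\le m$ for $i\neq j$. Interpolating with equal weights $1/k$ through \eqref{eq:interpolate} gives one $\psi$ with
\[
\psi(A_j\times K_m)\le s^{1/k}m^{1-1/k}
\]
on every component, so \eqref{eq:max} bounds the product capacity.

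This alone is too weak, since $m^{1-1/k}$ is still large. The real gain must come from the construction guaranteeing that $\varphi_i(A_j)$ is small for most pairs $i\ne j$, and not merely $\varphi_i(A_j)\le m$. I would therefore design the $A_j$ so that a family of rank-type spectral points gives $\varphi_i(A_j)\le s$ whenever $i\ne j$ lies outside a small exceptional set. Concretely, $A_j$ should be a union or product of pieces whose adjacency structure is controlled by low-degree polynomials in each of several primes, as in Alon's construction. The interpolated bound then becomes roughly $s^{1-\epsilon}m^{\epsilon}$, with $\epsilon$ the exceptional fraction.

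Optimizing $q$, $k$ and the degrees along the lines of Alon's and Grolmusz-type estimates should produce $\C(G_m\times K_m)\le\exp\bigl(K\sqrt{\log m}\,(\log\log m)^2/\log\log\log m\bigr)$ while keeping $N^{1/k}\ge m+1$. After that, \eqref{eq:clean-asym} follows from monotonicity of $f_C$ and by choosing $m=\lceil t\rceil$.

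The main obstacle is this middle step. We need a construction where a single family of spectral points is simultaneously small on all but few components, while the components together still admit a large cross-coordinate clique. Balancing the polynomial-method parameters to reach the $\sqrt{\log m}$ exponent is where I expect the real work; I would first try Alon's explicit Ramsey-type graphs built from set-intersection sizes modulo distinct primes.
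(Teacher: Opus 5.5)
Your outline isolates the right mechanism, essentially \cref{lem:many}. The ingredients are fixed spectral points, a bound $D$ on each component outside an exceptional set of $h$ of the $k$ points, equal-weight interpolation with the maximum rule giving $D^{1-h/k}m^{h/k}$, and then \cref{lem:connected} and $m=\lceil t\rceil$. But the step you call the main obstacle is the substance of the theorem, and the proposal does not supply it. There is no concrete family of graphs, no covering argument, no rank estimate and no parameter computation. So neither $\C(G_m)\ge m+1$ nor the exponent in \eqref{eq:asym-product} is derived.

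Your setup also ties the number of components to the number of spectral points, since both are indexed by the same $k$ with exceptions ``$i\ne j$''. The natural reading of Alon's idea in that setup would fail. Suppose graph $j$ joins $X\ne Y$ when $q_j\nmid L-|X\cap Y|$. This covers all pairs. However, its certificate $M_j(X,Y)=1-(|X\cap Y|-L)^{q_j-1}$ works only over $\F_{q_j}$: over $\F_{q_i}$ the analogous matrix is nonzero on edges with $q_i\mid L-|X\cap Y|$. Each component then has one good field, which gives exactly the weak bound $D^{1/k}m^{1-1/k}$ you rejected.

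The missing idea, in \cref{lem:partition}, is to decouple the two counts and index the components by their exceptional sets.
\begin{itemize}
\item \emph{Construction.} Take distinct primes $q_1,\dots,q_k\in(P,2P)$, and put $L=P^{h+1}$ and $d=PL$. Use the common vertex set $\binom{[d]}{L}$. For each $h$-subset $S\subseteq\{1,\dots,k\}$, let $A_S$ join $X\ne Y$ if and only if $\{j:q_j\mid L-|X\cap Y|\}\subseteq S$.
\item \emph{Covering.} This holds because $1\le L-|X\cap Y|\le L$, while any $h+1$ of the primes have product exceeding $P^{h+1}=L$.
\item \emph{Rank bound.} For $j\notin S$, Fermat's theorem makes $M_j$ vanish on $E(A_S)$. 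Multilinear reduction on $0$--$1$ vectors gives $\rank_{\F_{q_j}}M_j\le d^{q_j}\le d^{2P}=D$, so \cref{lem:haemers} yields $\phif{\F_{q_j}}(A_S)\le D$.
\item \emph{Cost of decoupling.} The code now has length $N=\binom kh$, so the lower bound is only $n^{1/N}$ with $n=\binom dL$.
\end{itemize}

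Three quantities must then be balanced: $\log n/N\ge P\log P\,(P/k)^h$, $\log D=2P(h+2)\log P$, and the exceptional cost $(h/k)\log m$. This balance forces $P=\lceil\sqrt{\log m}\rceil$, $k=\lfloor P/(3\log P)\rfloor$ and $h=\lceil\log P/\log\log P\rceil$. With these choices $\log n/N\ge P^2\log P\ge\log m$ and $\log\C(F\times K_m)\le 8Ph\log P$, which is the stated exponent. Without these ingredients, your text describes the strategy but does not prove the statement.
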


The full construction, parameter choices, and proof of
\cref{thm:asymptotic} are given in Appendix~\ref{sec:asymproof}.

In particular, $f_C(t)=t^{o(1)}$, so
\begin{equation}\label{eq:limits}
 \lim_{t\to\infty}\frac{f_C(t)}{t}=0,
 \qquad
 \lim_{t\to\infty}\frac{\log f_C(t)}{\log t}=0.
\end{equation}
Consequently, any lower bound for $f_C(t)$ that tends to infinity
must grow more slowly than every fixed positive power of $t$.

The idea behind the proof of \cref{thm:asymptotic} is to construct graphs with large capacities whose categorical
product has much smaller capacity. One factor is again the complete
graph $K_m$, with $\C(K_m)=m$. We now construct the other factor with
capacity at least $m$, so that the smaller factor capacity is exactly $m$.

The construction replaces the complementary pair $U_8,\overline{U_8}$
from \cref{sec:simple} with a family of graphs on a common set of $n$
vertex labels, chosen so that every pair of distinct labels is adjacent
in at least one graph; equivalently, their edge sets together cover all
edges of $K_n$. \Cref{lem:partition} provides such a family of $N$ graphs.
Let $F$ be their disjoint union. Write $x_i$ for the copy of vertex
label $x$ in the $i$th graph of the
disjoint union $F$. For each label $x$, form the length-$N$ word
$(x_1,\ldots,x_N)$, using one vertex from each graph in this fixed order.

For distinct labels $x,y$, this covering condition gives a graph in which
they are adjacent. If this is the $i$th graph, then $x_i$ and $y_i$ are
adjacent in $F$, so the corresponding words are adjacent in their
$i$th coordinate and hence in $F^{\OR N}$. Since this holds for every
pair of distinct labels, the $n$ words form a clique of size $n$.
By~\eqref{eq:capacity-supremum} in Appendix~\ref{sec:spectrum},
\[
 \C(F)\ge\omega(F^{\OR N})^{1/N}\ge n^{1/N}.
\]
We choose the construction parameters as functions of $m$ so that
$n^{1/N}\ge m$. Thus $\C(F)\ge m$, and since $\C(K_m)=m$, the smaller
factor capacity is exactly $m$:
\[
 \min\{\C(F),\C(K_m)\}=m.
\]

To bound the product capacity, we use spectral points associated with
$k$ finite fields. For each graph in the family, \cref{lem:partition}
supplies matrices of rank at most $D$ over $k-h$ of these fields.
These matrices have diagonal entries $1$ and vanish on the edges of
that graph, so \cref{lem:haemers} bounds the corresponding spectral
values by $D$. We call the remaining $h$ fields exceptional; the bound
$D$ is not required there, and these fields may depend on the graph.
For the product of each graph with $K_m$, projection to that graph
retains the bound $D$ in its nonexceptional fields, while projection to
$K_m$ gives the bound $m$ in its exceptional fields. Equal-weight
spectral interpolation~\eqref{eq:interpolate} produces a single spectral
point controlling all these products. Since $F\times K_m$ is their
disjoint union, applying the maximum rule to this spectral point gives,
as formalized in \cref{lem:many},
\[
 \C(F\times K_m)\le D^{1-h/k}m^{h/k}.
\]

The gain comes from making the fraction $h/k$ of exceptional fields
tend to zero. The same parameter choices that ensure $n^{1/N}\ge m$,
and hence $\C(F)\ge m$, also give $D=m^{o(1)}$ and $h/k=o(1)$.
Thus the smaller factor capacity is exactly $m$, whereas the product
capacity is subpolynomial in $m$.

\begin{remark}[Connected factors]
To obtain the connected factor in \cref{thm:asymptotic}, add one
universal vertex to $F$ and set $G_m=F\join K_1$. By
\cref{lem:connected},
\[
 \C(G_m)=\C(F)+1\ge m+1,
 \qquad
 \C(G_m\times K_m)\le\C(F\times K_m)+1.
\]
Thus the smaller factor capacity remains exactly $m$, while the
product capacity remains subpolynomial in $m$.
\end{remark}

The quantitative choices in Appendix~\ref{sec:asymproof} yield the
bound in \cref{thm:asymptotic}.

The upper bound in \cref{thm:asymptotic} tends to infinity and therefore
does not resolve \cref{conj:weak}. A refutation would
require an absolute constant $B$ and graph pairs $(G_n,H_n)$ with
\[
 \min\{\C(G_n),\C(H_n)\}\longrightarrow\infty,
 \qquad \C(G_n\times H_n)\le B\quad\text{for every }n.
\]
Conversely, proving the conjecture would require a lower bound tending
to infinity for $f_C$, which our argument does not provide.
The equivalence between failure of \cref{conj:weak} and the existence
of such a sequence is proved in \suppref{sec:spectrum}.

\section{Consequences for the asymptotic spectrum}\label{sec:spectral-consequences}

Our counterexamples show that even a graph parameter satisfying all the
defining properties of a spectral point can assign a categorical product
a smaller value than either factor. This follows from Simonyi's theorem:
every failure of Hedetniemi's identity for Shannon OR-capacity yields a
failure for some point of the asymptotic spectrum $\spec$ introduced in
\cref{sec:spectral-argument}~\cite[Thm.~8]{Simonyi}; see also Zhu's
survey~\cite[Sec.~2.2]{ZhuSurvey2025}. In general, the witnessing point
may depend on the graph pair. For the complete-factor family in
\cref{sec:integer-extension}, we obtain a stronger uniform conclusion:
spectral interpolation~\eqref{eq:interpolate} supplies one fixed spectral
point that witnesses failure for every member.

\Needspace{8\baselineskip}
\begin{corollary}[Failure of Hedetniemi's identity in the asymptotic spectrum]\label{cor:spectral}
There exists a spectral point $\psi\in\spec$ such that, for every
integer $m\ge7$, there is a graph $G_m$ satisfying
\[
 \psi(G_m\times K_m)<m
 =\min\{\psi(G_m),\psi(K_m)\}.
\]
\end{corollary}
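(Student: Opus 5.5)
We will take $G_m=X_m$ from the complete-factor family of \cref{sec:integer-extension}, and build $\psi$ once from the spectral points that already drive the proof of \cref{thm:finite}. The key observation is that, in that proof, the spectral points used on the components of $X_m$ do not depend on $m$. They are the points attaining the capacities of the fixed building blocks $U_6$ and $\overline{U_6}\join Q$, or, more precisely, fixed points $\varphi_1,\varphi_2\in\spec$ with $\varphi_1(U_6)=\C(U_6)$ and $\varphi_2(\overline{U_6}\join Q)=\C(\overline{U_6}\join Q)$, chosen by the attained minimum in~\eqref{eq:duality}. The interpolation weight $w$ used in \cref{prop:two} should likewise be chosen once, for instance the weight that works at $m=7$. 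We then define $\psi$ by applying~\eqref{eq:interpolate} to $\varphi_1,\varphi_2$ with weights $(w,1-w)$. This $\psi$ depends on neither $m$ nor the graph.

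First we bound $\psi(X_m\times K_m)$. The categorical product distributes over the disjoint union, so
\[
X_m\times K_m=\bigl((U_6\join K_{m-7})\times K_m\bigr)\sqcup\bigl((\overline{U_6}\join Q\join K_{m-7})\times K_m\bigr).
\]
By~\eqref{eq:max}, $\psi(X_m\times K_m)$ is the larger of its values on the two components. On each component, projection bounds $\varphi_j$ by either the component value or $m$. Join additivity gives $\varphi_1(U_6\join K_{m-7})=\varphi_1(U_6)+m-7$, and similarly for $\varphi_2$. Hence each component is bounded by a weighted geometric mean of the form $(a+m-7)^{w}m^{1-w}$ or $m^{w}(b+m-7)^{1-w}$, where $a,b<7$ are the fixed values. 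Both quantities are strictly below $m$ for every $m\ge7$, since a geometric mean of two numbers, one of which is strictly less than $m$, stays strictly below $m$ as long as the weight on that number is positive. This argument does not need the sharp bound $m-\tfrac14$; it only needs strict inequality. This is the one place to double-check: the proof of \cref{thm:finite} might have needed $m$-dependent weights or points. If so, fixing the $m=7$ choices still yields strict inequality by the argument just given, provided $0<w<1$ and $a,b<7$.

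Second, we verify $\min\{\psi(X_m),\psi(K_m)\}=m$. Since $\psi(K_m)=m$, it suffices to show $\psi(X_m)\ge m$. This follows because every spectral point bounds capacity from above, $\psi(X_m)\ge\C(X_m)$, and \cref{thm:finite} gives $\C(X_m)>m$. Thus $\psi(X_m)>m=\psi(K_m)$, and combining the two steps yields $\psi(X_m\times K_m)<m=\min\{\psi(X_m),\psi(K_m)\}$ for every $m\ge7$.

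The main obstacle is therefore the uniformity in the first step. We must confirm that the certificate points for $U_6$ and $\overline{U_6}\join Q$ (using \cref{lem:clebsch} for the Clebsch part) give values $a,b<7$ that are independent of $m$. We must also confirm that join-padding by $K_{m-7}$ shifts each $\varphi_j$ exactly by $m-7$. The second point is immediate from join additivity and $\varphi(K_{m-7})=m-7$.
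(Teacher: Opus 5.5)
Your proposal is correct and matches the paper's proof. The paper likewise fixes the single point $\psi=\psi_7$, obtained by interpolating the points attaining $\C(U_6)$ and $\C(\overline{U_6}\join Q)$ with the $m$-independent weights $1/4,3/4$. It then uses join additivity to get $\psi(X_m\times K_m)\le m-\tfrac14$ for all $m\ge7$, and concludes from $\psi(X_m)\ge\C(X_m)>m$ and $\psi(K_m)=m$. Your remark that any fixed weight in $(0,1)$ already gives the needed strict inequality is a harmless generalization of the paper's choice.
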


The proof follows from the fixed spectral point constructed for
\cref{thm:finite} and is given in \cref{sec:seven}.

\begin{remark}[Sharper finite examples]
The constructions in \cref{thm:sharper} also give spectral failures
at smaller values and at real separating thresholds. For the sub-six
pair, a witnessing spectral point has product value below $\sqrt{27}$
and both factor values in $[\sqrt{27},6)$, as shown in
Appendix~\ref{sec:subsixproof}. For every real $t\ge6.0101$, the
construction in Appendix~\ref{sec:thresholdproof} gives graphs $G_t,H_t$
and a spectral point $\psi_t$ with
$\psi_t(G_t\times H_t)<t<\min\{\psi_t(G_t),\psi_t(H_t)\}$.
Here $\psi_t$ may depend on $t$.
\end{remark}

Thus homomorphism monotonicity, additivity under joins, and
multiplicativity under the OR-product do not imply Hedetniemi's
identity for the categorical product, even
when one factor is complete. This contrasts with the fractional
chromatic number and the Lov\'asz theta number of the complement,
two spectral points for which the identity holds~\cite{Zhu,GodsilEtAl}.

\section{Operational interpretation}\label{sec:operational}

The categorical product has a natural interpretation in zero-error
communication: a sender supplies inputs to two channels, and a random
switch determines which output reaches the receiver. Under this
interpretation, our counterexamples show that switching can reduce the
communication rate below what either channel supports individually.

For a finite discrete memoryless channel $W$, let $D(W)$ be its
\emph{distinguishability graph}: vertices are inputs, and two inputs are
adjacent when their possible output sets are disjoint. Write $M_0(W,n)$
for the largest number of messages that can be transmitted with zero
error in $n$ uses without feedback. Shannon's coding
relations~\cite{Shannon}, expressed in the distinguishability-graph
convention of Csonka and Simonyi~\cite[Sec.~I]{CsonkaSimonyi}, give
\begin{equation}\label{eq:channel-rate}
\begin{aligned}
 M_0(W,n)&=\omega\bigl(D(W)^{\OR n}\bigr),\\
 R_0(W)&=\sup_{n\ge1}\frac{\log_2 M_0(W,n)}{n}
        =\log_2\C(D(W)).
\end{aligned}
\end{equation}
Here $R_0(W)$ is measured in bits per use. Every finite graph is the
distinguishability graph of some channel; see
Alon~\cite[Sec.~2]{AlonGraphPowers} for the equivalent statement for
confusability graphs. An explicit realization is given in
\suppref{sec:channelproof}.

Let $\mathcal A$ and $\mathcal B$ be finite input alphabets, and let
$\mathcal Y$ and $\mathcal Z$ be finite output alphabets.
Consider channels $W_G:\mathcal A\to\mathcal Y$ and
$W_H:\mathcal B\to\mathcal Z$ with distinguishability graphs $G$ and $H$.
Before transmission, the sender fixes for each message a sequence of
$n$ input pairs $(a_i,b_i)\in\mathcal A\times\mathcal B$, where
$i=1,\ldots,n$ labels the channel use. At use $i$, the inputs $a_i$
and $b_i$ are supplied to $W_G$ and $W_H$, respectively.
No information about which channel was selected or what output it
produced is returned to the sender during transmission.
For each message, the sender may choose any sequence of input pairs
from $(\mathcal A\times\mathcal B)^n$.
At each use, independently of the message, channel randomness and
previous uses, a switch selects the first channel with probability
$p\in(0,1)$ and the second otherwise. The receiver observes the selected
channel and its output. The resulting memoryless channel
$W_{\mathrm{sw}}$ has input alphabet $\mathcal A\times\mathcal B$ and
output alphabet $(\{0\}\times\mathcal Y)\cup(\{1\}\times\mathcal Z)$.
An output $(s,u)$ records the selected channel ($s=0$ for $W_G$ and
$s=1$ for $W_H$) and its output $u$. For each input
$(a,b)\in\mathcal A\times\mathcal B$, the probability of receiving
$(s,u)$ is
\[
 W_{\mathrm{sw}}\bigl((s,u)\mid(a,b)\bigr)
 =\begin{cases}
   p\,W_G(u\mid a), & s=0,\quad u\in\mathcal Y,\\[2pt]
   (1-p)\,W_H(u\mid b), & s=1,\quad u\in\mathcal Z.
  \end{cases}
\]
Zero-error decoding must succeed for every message and every possible
switch and output sequence. This is a state-dependent channel with
state information available only to the
receiver~\cite[Sec.~7.4]{ElGamalKim}. For message $j$, write
$(a_i(j),b_i(j))$ for the input pair at use $i$, and $(S_i,U_i)$ for
the corresponding random labelled output. \Cref{fig:switching}
illustrates the task.

\begin{figure}[!ht]
\centering
\begin{tikzpicture}[
 x=1cm,y=1cm,>=Latex,
 every node/.style={font=\small,align=center},
 block/.style={draw=black!65,rounded corners=2pt,fill=black!3,
               line width=.6pt,minimum height=1.05cm,inner sep=5pt},
 channel/.style={block,fill=blue!5,minimum width=1.55cm,
                 minimum height=.8cm},
 flow/.style={->,line width=.7pt},
 note/.style={font=\footnotesize,text=black!80}
]
\node (message) at (0,0) {Message\\$j$};
\node[block,minimum width=1.65cm] (encoder) at (2,0) {One\\encoder};
\node[channel] (g) at (5.4,.9) {Channel $W_G$};
\node[channel] (h) at (5.4,-.9) {Channel $W_H$};
\node[block,fill=red!5,minimum width=1.65cm] (switch) at (8.4,0)
 {Switch\\\footnotesize forwards one\\\footnotesize output};
\node[block,minimum width=1.85cm] (receiver) at (12,0) {Receiver};
\node[note] (state) at (8.4,2)
 {Independent choice at each use\\
  $S_i=0$ with $p$,\quad $S_i=1$ with $1-p$};
\draw[flow] (message.east) -- (encoder.west);
\draw[line width=.7pt] (encoder.east) -- (3.15,0);
\fill (3.15,0) circle (1.1pt);
\draw[flow] (3.15,0) |- node[pos=.75,above] {$a_i(j)$} (g.west);
\draw[flow] (3.15,0) |- node[pos=.75,below] {$b_i(j)$} (h.west);
\draw[flow] (g.east) -- (6.9,.9) -- (switch.west);
\draw[flow] (h.east) -- (6.9,-.9) -- (switch.west);
\draw[flow,dashed] (state.south) -- (switch.north);
\draw[flow] (switch.east) -- node[above] {$(S_i,U_i)$}
 node[below,note] {channel +\\output} (receiver.west);
\node[note] at (2,-1.8)
 {Input sequence fixed in advance;\\no observations returned to sender};
\node[note] at (12,-1.8)
 {After $n$ uses,\\recover $j$ with zero error};
\end{tikzpicture}
\caption{Random switching between two channels. The receiver observes the
selected channel and its output, and decoding must succeed for every
possible switch and output sequence. OpenAI Codex assisted with
preparation of the TikZ source.}
\label{fig:switching}
\end{figure}

\Needspace{9\baselineskip}
\begin{proposition}[Random switching realizes the categorical product]
\label{prop:switch}
For every $p\in(0,1)$,
\begin{equation}\label{eq:switch-graph}
 D(W_{\mathrm{sw}})=G\times H.
\end{equation}
Consequently, for every $n\ge1$,
\[
 M_0(W_{\mathrm{sw}},n)=\omega\bigl((G\times H)^{\OR n}\bigr),
 \qquad R_0(W_{\mathrm{sw}})=\log_2\C(G\times H).
\]
Both quantities are independent of $p$.
\end{proposition}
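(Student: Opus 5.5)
The plan is to verify the graph identity \eqref{eq:switch-graph} directly from the definition of the distinguishability graph, and then read off the rate statements from \eqref{eq:channel-rate}. Both graphs have vertex set $\mathcal A\times\mathcal B$, so only adjacency has to be compared. First I will compute the support of $W_{\mathrm{sw}}(\cdot\mid(a,b))$. Write $\mathrm{supp}_G(a)\subseteq\mathcal Y$ and $\mathrm{supp}_H(b)\subseteq\mathcal Z$ for the output sets of the individual channels. Since $0<p<1$, both weights $p$ and $1-p$ are strictly positive, and the displayed formula gives
\[
 \mathrm{supp}_{\mathrm{sw}}(a,b)=\bigl(\{0\}\times\mathrm{supp}_G(a)\bigr)\cup\bigl(\{1\}\times\mathrm{supp}_H(b)\bigr).
\]
This is the only place where the hypothesis $p\in(0,1)$ enters. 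It also shows that the support, and hence the graph, does not depend on $p$.

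Next I compare two distinct inputs $(a,b)$ and $(a',b')$. The two labelled pieces $\{0\}\times\mathcal Y$ and $\{1\}\times\mathcal Z$ are disjoint. Hence the two supports are disjoint if and only if $\mathrm{supp}_G(a)\cap\mathrm{supp}_G(a')=\emptyset$ and $\mathrm{supp}_H(b)\cap\mathrm{supp}_H(b')=\emptyset$. By definition of $D$, this says $aa'\in E(G)$ and $bb'\in E(H)$.

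One point needs care: $G$ and $H$ are simple, so loops are absent. Every input has nonempty support, since $W_G(\cdot\mid a)$ is a probability distribution. So if $a=a'$, the two $G$-supports meet, and likewise for $b=b'$. The condition "disjoint supports in the first coordinate" is therefore exactly adjacency in $G$, with no spurious loops. The same holds in the second coordinate. This matches the categorical product adjacency, where both coordinates must be adjacent and in particular distinct. This proves $D(W_{\mathrm{sw}})=G\times H$.

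Finally, I apply \eqref{eq:channel-rate} to the memoryless channel $W_{\mathrm{sw}}$. This gives $M_0(W_{\mathrm{sw}},n)=\omega\bigl(D(W_{\mathrm{sw}})^{\OR n}\bigr)=\omega\bigl((G\times H)^{\OR n}\bigr)$ and $R_0(W_{\mathrm{sw}})=\log_2\C(G\times H)$. Independence of $p$ follows because the right-hand sides involve only $G$ and $H$.

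The only step that needs any care is confirming that the switching channel really is a discrete memoryless channel to which Shannon's relations apply. It is, because the switch is independent at each use, so $W_{\mathrm{sw}}^n$ factorizes as a product. Zero-error decoding depends only on supports, and the receiver observes $S_i$, which is already built into the output alphabet. I expect no genuine obstacle: the main point is simply to note positivity of $p$ and $1-p$ and the nonemptiness of the output supports.
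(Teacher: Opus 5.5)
Your proposal is correct and follows the paper's proof essentially step for step. Both compute the labelled output support using $p,1-p>0$, split disjointness of supports into disjointness in each branch, identify this with adjacency in both $G$ and $H$, and then invoke \eqref{eq:channel-rate}. Your explicit check that nonempty supports exclude the cases $a=a'$ or $b=b'$ is left implicit in the paper, and you handle it correctly.
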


Indeed, two paired inputs are distinguishable precisely when their
component inputs are distinguishable in both channels. This
gives the categorical product for a single use, whereas the OR-power
governs repeated uses. The full proof is given in
\suppref{sec:channelproof}.

Since every finite graph has a channel realization, the Shannon
OR-capacity version of Hedetniemi's conjecture is therefore equivalent
to the assertion that
\begin{equation}\label{eq:switch-benchmark}
 R_0(W_{\mathrm{sw}})=\min\{R_0(W_G),R_0(W_H)\}
\end{equation}
for every pair of channels. Our counterexamples show that this equality
can fail even when one channel is noiseless.

Such a loss does not occur for ordinary vanishing-error capacity,
denoted by $C_{\mathrm{Sh}}$. Indeed, the same switching model satisfies
\[
 C_{\mathrm{Sh}}(W_{\mathrm{sw}})
 =pC_{\mathrm{Sh}}(W_G)+(1-p)C_{\mathrm{Sh}}(W_H),
\]
which is at least the smaller component capacity.
\suppref{sec:channelproof} proves this identity. Zero-error communication
must also accommodate arbitrarily unlikely switch sequences, which
accounts for both the additional loss and its independence from $p$.

Return to the simple pair $F_9,K_9$ from \cref{sec:simple}.
Let $W_F$ realize $F_9$, and let $W_9$ be a noiseless nine-symbol
channel. For every $p\in(0,1)$,
\begin{equation}\label{eq:finite-rates}
\begin{aligned}
 R_0(W_F)&=\tfrac12\log_2 128>\log_2 9=R_0(W_9),\\
 R_0(W_{\mathrm{sw}})&\le\tfrac12\log_2 72<\log_2 9.
\end{aligned}
\end{equation}
The loss is already visible in two uses:
\begin{equation}\label{eq:finite-two-use}
 M_0(W_F,2)=128,\qquad M_0(W_9,2)=81,\qquad
 M_0(W_{\mathrm{sw}},2)\le72.
\end{equation}
Thus either channel individually can transmit 81 messages without
error in two uses, whereas their switched combination can transmit at
most 72. The bounds follow from the explicit two-coordinate code and
the capacity estimates for $F_9$ and $F_9\times K_9$; the details are
given in \suppref{sec:channelproof}.

\Cref{thm:finite,thm:sharper} give the following broader coding
consequences.

\Needspace{10\baselineskip}
\begin{corollary}[Finite loss under switching]\label{cor:finite-rates}
In each part, the channels are chosen independently of $p$, and the
conclusions hold for every $p\in(0,1)$.
\begin{enumerate}
\item For every integer $m\ge7$, there are channels $W_G,W_m$, with
$W_m$ noiseless on $m$ symbols, such that
\[
 R_0(W_{\mathrm{sw}})\le\log_2(m-\tfrac14)
 <\log_2m=R_0(W_m)<R_0(W_G).
\]
\item There are channels $W_G,W_H$ with both individual zero-error
rates below $\log_2 6$ such that
\[
 R_0(W_{\mathrm{sw}})<\tfrac12\log_2 27
 \le\min\{R_0(W_G),R_0(W_H)\}.
\]
Each channel admits a zero-error code of 27 messages in two uses,
whereas the switched channel admits at most 26.
\item For every real target rate $r\ge\log_2(6.0101)$, there are
channels $W_G,W_H$ such that
\[
 R_0(W_{\mathrm{sw}})<r<\min\{R_0(W_G),R_0(W_H)\}.
\]
\end{enumerate}
\end{corollary}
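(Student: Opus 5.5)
The corollary follows by translating \cref{thm:finite,thm:sharper} into rates through \cref{prop:switch}. The two tools are the realization of every finite graph as a distinguishability graph (\suppref{sec:channelproof}) and the rate formula~\eqref{eq:channel-rate}. For each part, I would take the graph pair supplied by the relevant theorem and realize each graph by a channel; these channels do not involve $p$. I would then apply \cref{prop:switch}: for every $p\in(0,1)$ it identifies $D(W_{\mathrm{sw}})$ with the categorical product and gives $R_0(W_{\mathrm{sw}})=\log_2\C(G\times H)$. Since $\log_2$ is strictly increasing, every capacity inequality in the theorems becomes the corresponding rate inequality.

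\emph{Part (1).} Take $G$ from \cref{thm:finite}, and let $W_m$ be the noiseless channel on $m$ symbols, which is the identity channel. Its distinguishability graph is $K_m$, so $R_0(W_m)=\log_2 m$. Taking logarithms in $\C(G\times K_m)\le m-\tfrac14<m<\C(G)$ gives the stated chain.

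\emph{Part (3).} Given $r\ge\log_2(6.0101)$, set $t=2^r\ge6.0101$. Apply \cref{thm:sharper}(2) to get $G,H$ with $\C(G\times H)<t<\min\{\C(G),\C(H)\}$, and take logarithms.

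\emph{Part (2).} Apply \cref{thm:sharper}(1). The rate statements follow exactly as above, with $\log_2\sqrt{27}=\tfrac12\log_2 27$ and both rates below $\log_2 6$.

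For the two-use message counts, I would argue as follows. The lower bound uses the explicit two-coordinate codes of $27$ words mentioned after \cref{thm:sharper}. These are cliques of size $27$ in $G^{\OR2}$ and $H^{\OR2}$, so $M_0(W_G,2),M_0(W_H,2)\ge27$ by~\eqref{eq:channel-rate}. For the upper bound, \cref{prop:switch} gives $M_0(W_{\mathrm{sw}},2)=\omega((G\times H)^{\OR2})\le\C(G\times H)^2$, using the supremum formula~\eqref{eq:capacity-supremum}. Since $\C(G\times H)<\sqrt{27}$, this bound is strictly below $27$, and because $M_0$ is an integer it is at most $26$.

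The only step that is not bookkeeping is this integer-rounding argument. It needs the product capacity bound to be \emph{strict}, which \cref{thm:sharper}(1) provides, with the explicit value $5.193957$ available as a margin. It also needs the fact that fixed-length clique numbers are bounded by the capacity power. I do not expect any real obstacle here.
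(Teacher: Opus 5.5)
Your proposal is correct and follows the paper's proof essentially step for step. You realize the graph pairs from \cref{thm:finite} and \cref{thm:sharper} by $p$-independent channels, apply \cref{prop:switch}, and take base-two logarithms; for part~(2) you combine the explicit 27-word codes with $M_0(W_{\mathrm{sw}},2)\le\C(G\times H)^2<27$ and integrality, exactly as the paper does.
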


The proof is given in \suppref{sec:channelproof}.
These finite examples establish strict loss and locate it across a
range of rates. The asymptotic construction shows how severe the
relative loss can become along a growing family of channels.
Here $m$ indexes a family of channels. For each fixed $m$, the
zero-error capacities already allow codes of arbitrarily large block
length; the limit $m\to\infty$ concerns the channel family.

\Needspace{10\baselineskip}
\begin{corollary}[Asymptotic loss under switching]\label{cor:asym-rates}
For every sufficiently large integer $m$, there are channels $W_{G_m}$
and $W_m$, with $W_m$ noiseless on $m$ symbols, whose smaller individual
zero-error rate is
\[
 R_m=\min\{R_0(W_{G_m}),R_0(W_m)\}=\log_2m.
\]
For every $p\in(0,1)$, their switched channel satisfies
\begin{equation}\label{eq:asym-rates}
 R_0(W_{\mathrm{sw},m})\le R_m^{1/2+o(1)},
 \qquad \frac{R_0(W_{\mathrm{sw},m})}{R_m}\longrightarrow0
 \quad(m\to\infty).
\end{equation}
\end{corollary}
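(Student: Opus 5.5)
The plan is to combine \cref{thm:asymptotic} with the channel correspondence in \cref{prop:switch} and the rate formula~\eqref{eq:channel-rate}. For each sufficiently large $m$, take the connected graph $G_m$ of \cref{thm:asymptotic}. Realize it as the distinguishability graph of a channel $W_{G_m}$, using the fact that every finite graph has a channel realization. Let $W_m$ be the noiseless channel on $m$ symbols, whose distinguishability graph is $K_m$. Then~\eqref{eq:channel-rate} gives $R_0(W_m)=\log_2\C(K_m)=\log_2 m$ and $R_0(W_{G_m})=\log_2\C(G_m)\ge\log_2(m+1)$. Hence $R_m=\log_2 m$. Both channels are chosen independently of $p$.

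Next I apply \cref{prop:switch}. For every $p\in(0,1)$ it gives $R_0(W_{\mathrm{sw},m})=\log_2\C(G_m\times K_m)$. Then~\eqref{eq:asym-product} yields
\[
 R_0(W_{\mathrm{sw},m})\le \frac{1}{\ln 2}\,K\sqrt{\ln m}\,\frac{(\ln\ln m)^2}{\ln\ln\ln m}.
\]
It remains to express this bound in terms of $R_m=\log_2 m$. Since $\ln m=R_m\ln 2$, the bound equals $\sqrt{R_m}$ multiplied by a factor $C\,(\log\log m)^2/\log\log\log m$, where $C$ depends only on $K$. Because $\log\log m=\log R_m+O(1)$, this factor is $(\log R_m)^{O(1)}=R_m^{o(1)}$. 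This gives $R_0(W_{\mathrm{sw},m})\le R_m^{1/2+o(1)}$.

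The ratio statement follows directly. We have $R_0(W_{\mathrm{sw},m})/R_m\le R_m^{-1/2+o(1)}\to 0$, because $R_m=\log_2 m\to\infty$.

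The only point needing care is the bookkeeping of the $o(1)$ in the exponent. One must check that the polylogarithmic factor $(\log\log m)^2/\log\log\log m$, together with the constant $1/\ln 2$, is absorbed into $R_m^{o(1)}$. This works because $(\log R_m)^2=\exp(2\log\log R_m)$ and $\log\log R_m=o(\log R_m)$. Everything else is a direct substitution of earlier results, and the independence from $p$ is inherited from \cref{prop:switch}.
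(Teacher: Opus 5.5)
Your proposal is correct and follows the paper's proof step for step: realize $G_m$ and $K_m$ by channels, apply \cref{prop:switch}, take base-two logarithms in \eqref{eq:asym-product}, and absorb the polylogarithmic factor into $R_m^{o(1)}$. You also track the constant $1/\ln 2$ and the relation $\log\log m=\log R_m+O(1)$ explicitly, which the paper leaves implicit, but the route is the same.
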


The proof is given in \suppref{sec:channelproof}.

Switching can severely reduce the zero-error communication rate
compared with using either channel alone. In our construction, both
individual rates grow without bound, yet switching reduces the
achievable rate to an arbitrarily small fraction of the smaller
individual rate. This does not establish whether the switched rate
itself remains bounded.

\FloatBarrier
\section{Discussion}\label{sec:discussion}

We have shown that the Shannon OR-capacity of a categorical product can
be strictly smaller than the capacity of either factor. This disproves
the Shannon OR-capacity version of Hedetniemi's conjecture. For every
integer $m\ge7$, we construct a counterexample in which one factor is
the complete graph $K_m$ and the other has capacity greater than $m$.
We also give a counterexample in which both factor capacities are below
six.

Our constructions also show that the product identity can fail for
parameters in the asymptotic spectrum. One fixed spectral point
witnesses this failure throughout the complete-factor family. In the
operational interpretation, our results mean that random switching
between two channels can reduce the zero-error communication rate below
both individual rates. This loss already appears in the number of
messages that can be transmitted without error in just two uses. Along
our asymptotic family, both individual rates grow without bound while
the fraction of the smaller rate retained under switching tends to zero.

Several questions remain at smaller capacities. The product identity
holds whenever the smaller factor capacity is $1$ or $2$
\cite[Rem.~5]{Simonyi}. Whether it holds when the smaller factor capacity
is exactly $3$, $4$, $5$, or $6$ remains open.

\Needspace{9\baselineskip}
\begin{samepage}
The first unresolved integer case is therefore the following.
\begin{question}[Capacity three]\label{q:capacity-three}
Do all graphs $G,H$ satisfy
\[
 \min\{\C(G),\C(H)\}=3
 \quad\Longrightarrow\quad
 \C(G\times H)=3?
\]
\end{question}
\end{samepage}

Thus, the smallest integer value of the smaller factor capacity
admitting a counterexample lies between $3$ and $7$. Determining this
value, and whether counterexamples exist at every larger integer value,
remains open.

Since Shannon OR-capacity need not be an integer, there is also a
separate question about real thresholds: for which real numbers $t$ can
both factor capacities exceed $t$ while the product capacity is below
$t$? Our constructions establish this for a small interval of thresholds
immediately below $\sqrt{27}$, and for every threshold above an explicit
constant slightly larger than $6$.
We do not yet have a complete description of these thresholds or a
continuous range connecting the two ranges above. These results do not
require either factor capacity to equal $t$. In particular, the example
with both capacities below six does not resolve the exact integer cases
$3,4,5,6$.

At large capacities, we prove that both factor capacities can tend to
infinity while the product capacity remains below every fixed positive
power of the smaller factor capacity, once that capacity is sufficiently
large. This is possible even with connected factors and one complete
factor.

The weak Hedetniemi conjecture for Shannon OR-capacity nevertheless
remains open. It asks whether the product capacity must tend to infinity
whenever both factor capacities do. To disprove it, one would need graph
pairs whose factor capacities tend to infinity while their product
capacities stay below one fixed constant. Our asymptotic bound does not
establish this, and we also do not prove that the product capacity must
diverge.

\section*{Acknowledgements}

The author acknowledges support from the European Research Council
(ERC Grant Agreement No.\ 948139) and from the Deutsche
Forschungsgemeinschaft (DFG, German Research Foundation) under Germany's
Excellence Strategy -- EXC 2004/2 -- 390534769 (ML4Q-2).
The funders had no role in the research, preparation of the manuscript,
or decision to submit the article for publication.

\section*{Declaration of competing interest}

The author declares no competing interests.

\section*{Data and code availability}

The graph definitions, finite codes and exact certificates are included
in the supplementary appendices. The accompanying public
repository~\cite{ZeissData} provides the verification programs,
machine-readable graph and certificate data, and instructions for
reproducing the finite checks. These programs use only the Python
standard library. No numerical solver is needed to verify the
certificates.

\section*{Declaration of generative AI and AI-assisted technologies}

The author used OpenAI Codex (OpenAI) to assist with mathematical
exploration, developing and reviewing arguments, literature searches,
preparation of the manuscript and TikZ figures, and development and
review of verification code. The verification subsection of
\suppref{sec:subsixproof} describes
the computational checks and the role of AI assistance. The author
takes full responsibility for the content, accuracy, originality and
integrity of the manuscript, including all AI-assisted material.

\clearpage
\appendix
\crefname{theorem}{Thm.}{Thms.}
\Crefname{theorem}{Thm.}{Thms.}
\crefname{corollary}{Cor.}{Cors.}
\Crefname{corollary}{Cor.}{Cors.}
\crefname{conjecture}{Conj.}{Conjs.}
\Crefname{conjecture}{Conj.}{Conjs.}
\crefname{question}{Q.}{Qs.}
\Crefname{question}{Q.}{Qs.}
\crefname{lemma}{Lem.}{Lems.}
\Crefname{lemma}{Lem.}{Lems.}
\crefname{proposition}{Prop.}{Props.}
\Crefname{proposition}{Prop.}{Props.}
\crefname{equation}{Eq.}{Eqs.}
\Crefname{equation}{Eq.}{Eqs.}
\crefname{section}{Sec.}{Secs.}
\Crefname{section}{Sec.}{Secs.}
\crefname{subsection}{Subsec.}{Subsecs.}
\Crefname{subsection}{Subsec.}{Subsecs.}
\crefname{appendix}{App.}{Apps.}
\Crefname{appendix}{App.}{Apps.}
\crefname{figure}{Fig.}{Figs.}
\Crefname{figure}{Fig.}{Figs.}
\crefname{table}{Tab.}{Tabs.}
\Crefname{table}{Tab.}{Tabs.}
\renewcommand{\suppref}[1]{Supplementary App.~\ref{#1}}
\renewcommand{\thesection}{\Alph{section}}
\numberwithin{theorem}{section}
\numberwithin{figure}{section}
\numberwithin{table}{section}
\renewcommand{\theHsection}{supp.\Alph{section}}
\renewcommand{\theHequation}{supp.\Alph{section}.\arabic{equation}}
\renewcommand{\theHtheorem}{supp.\Alph{section}.\arabic{theorem}}
\renewcommand{\theHfigure}{supp.\Alph{section}.\arabic{figure}}
\renewcommand{\theHtable}{supp.\Alph{section}.\arabic{table}}
\begin{center}
 {\Large\bfseries Supplementary appendices}\\[4pt]
 {\normalsize Conventions, proofs and channel interpretation}
\end{center}

\begingroup
\small
\noindent\textbf{Contents}\par\smallskip
\newcommand{\suppcontentsline}[1]{%
 \noindent\hyperref[#1]{\makebox[2.2em][l]{\ref*{#1}}\nameref*{#1}}%
 \nobreak\dotfill\hyperref[#1]{\pageref*{#1}}\par}
\suppcontentsline{sec:notation}
\suppcontentsline{sec:products}
\suppcontentsline{sec:spectrum}
\suppcontentsline{sec:interpolation}
\suppcontentsline{sec:finiteproof}
\suppcontentsline{sec:subsixproof}
\suppcontentsline{sec:thresholdproof}
\suppcontentsline{sec:asymproof}
\suppcontentsline{sec:channelproof}
\endgroup
\par\medskip

\section{Notation and conventions}\label{sec:notation}

This appendix collects the notation and conventions used throughout
the paper. It provides a reference for the graph operations, capacity
parameters and asymptotic estimates in the main text.

All graphs are finite, simple and nonempty. A graph homomorphism is a
map $f:V(G)\to V(H)$ such that $xy\in E(G)$ implies
$f(x)f(y)\in E(H)$; it need not be injective. Graphs are
\emph{homomorphically equivalent} if homomorphisms exist in both
directions. A real-valued graph invariant $p$ is
\emph{homomorphism-monotone} if $G\to H$ implies $p(G)\le p(H)$.
Such invariants agree on homomorphically equivalent graphs; examples
include $\omega$, $\chi$ and $\C$.

Logarithms in asymptotic estimates are natural; communication rates
use $\log_2$ and are measured in bits per channel use. For a real-valued
function $a$ and a positive function $b$, the notation $a=O(b)$ means
that $|a|/b$ remains bounded,
$a=o(b)$ means $a/b\to0$, and $a\sim b$ means $a/b\to1$, as the
indicated parameter tends to infinity. In particular, for $a(t)\ge1$,
$a(t)=t^{o(1)}$ means $\log a(t)/\log t\to0$ and describes growth slower than
every fixed positive power of $t$.

\par\addvspace{.8\baselineskip}
\begingroup
\small
\renewcommand{\arraystretch}{1}
\noindent
\begin{tabularx}{\textwidth}{@{}p{.29\textwidth}X@{}}
\toprule
\textbf{Notation} & \textbf{Meaning}\\
\midrule
$G\to H$, $\overline G$
 & A homomorphism from $G$ to $H$ exists; the complement of $G$.\\
$K_m$, $I_m$, $P_3$
 & Complete and edgeless graphs on $m$ vertices; the three-vertex path.\\
$G\times H$
 & Categorical product, with adjacency in both coordinates.\\
$G\boxtimes H$
 & Strong (AND) product, in which distinct pairs are adjacent when their entries are equal or adjacent in each coordinate.\\
$G\OR H$
 & OR-product, with adjacency in at least one coordinate.\\
$G^{\OR r}$, $G^{\boxtimes r}$
 & The $r$-fold OR and strong powers, respectively.\\
$A\sqcup B$, $A\join B$
 & Disjoint union; join, obtained by adding all edges between the factors.\\
$\omega(G)$, $\alpha(G)$, $\chi(G)$
 & Clique number, independence number and chromatic number.\\
$\chi_f(G)$
 & Fractional chromatic number, defined in \suppref{sec:spectrum}.\\
$\vartheta(G)$, $\overline\vartheta(G)$
 & Lov\'asz theta number; $\overline\vartheta(G)=\vartheta(\overline G)$.\\
$\F_q$, $\rank_{\mathbb K}M$
 & Field with $q$ elements ($q$ a prime power); rank of $M$ over a field $\mathbb K$.\\
$x\cdot y$, $\langle u,v\rangle$
 & Dot product over the stated finite field; real Euclidean inner product.\\
$\C(G)$, $\Theta(G)$
 & Shannon OR-capacity and ordinary Shannon capacity, with $\C(G)=\Theta(\overline G)$.\\
$f_C(t)$ ($t\ge1$)
 & Infimum of $\C(G\times H)$ with $\C(G),\C(H)\ge t$; see \eqref{eq:extremal}.\\
$\spec$, $\varphi$
 & The OR asymptotic spectrum and a spectral point in it; see \cref{sec:spectral-argument}.\\
$\phif{\mathbb K}(G)$
 & Complementary fractional Haemers bound $\mathcal H_f^{\mathbb K}(\overline G)$ over the field $\mathbb K$.\\
$D(W)$, $R_0(W)$
 & Distinguishability graph; optimal zero-error rate without feedback.\\
$M_0(W,n)$
 & Largest number of messages transmissible with zero error in $n$ channel uses without feedback.\\
$W_{\mathrm{sw}}$
 & Randomly switched channel whose output includes the selected branch label.\\
$C_{\mathrm{Sh}}(W)$
 & Ordinary vanishing-error capacity of a channel.\\
\bottomrule
\end{tabularx}
\par
\endgroup
\addvspace{.8\baselineskip}

\clearpage
\section{Graph products}\label{sec:products}

This appendix compares the categorical, OR and strong graph products
and explains their relation under complementation. These distinctions
underlie the capacity arguments in \cref{sec:simple} and the channel
interpretation in \cref{sec:operational}.

All three graph products have vertex set $V(G)\times V(H)$.
The strong product permits adjacent pairs to share one coordinate,
whereas the categorical product requires adjacency in both.

Complementation relates the OR and strong products through
\[
 \overline{G\OR H}=\overline G\boxtimes\overline H,
 \qquad
 \omega(G^{\OR r})=\alpha\bigl((\overline G)^{\boxtimes r}\bigr).
\]
Indeed, two distinct pairs are nonadjacent in the OR-product precisely
when, in each coordinate, their entries are equal or adjacent in the
complementary graph. Cliques become independent sets under
complementation. Taking $r$th roots and limits in the capacity
definitions gives $\C(G)=\Theta(\overline G)$, as used in
\cref{sec:spectral-argument}; see Simonyi~\cite[Sec.~2]{Simonyi}.

In the distinguishability convention, adjacent inputs can be
distinguished with zero error. Two words are distinguishable if this
holds in at least one coordinate. Thus a clique in $G^{\OR r}$ is a
zero-error code for $r$ channel uses, and $\log_2\C(G)$ is the
asymptotic communication rate. Equivalently, the same code is an
independent set in the strong power of the confusability graph
$\overline G$. The categorical product is the
operation tested by Hedetniemi's identity, as in the framework of
Zhu~\cite[Secs.~2 and~2.2]{ZhuSurvey2025}.
\Cref{fig:products} compares the three products and illustrates the
complement relation.

\begin{figure}[!ht]
\centering
\begin{tikzpicture}[
 x=1cm,y=1cm,
 every node/.style={font=\small,align=center,text=black},
 vertex/.style={circle,fill=black,draw=black,inner sep=0pt,
                minimum size=4pt},
 edge/.style={draw=black,line width=.7pt},
 strongextra/.style={edge,dashed},
 orextra/.style={edge,densely dotted,line width=.9pt},
 explanation/.style={font=\footnotesize},
 coordlabel/.style={font=\footnotesize}
]
\node at (.8,3.2) {$G=K_2:$};
\draw[edge] (2,3.2) -- (3.3,3.2);
\node[vertex,label={[coordlabel]below:$0$}] at (2,3.2) {};
\node[vertex,label={[coordlabel]below:$1$}] at (3.3,3.2) {};
\node at (6.2,3.2) {$H=P_3:$};
\draw[edge] (7.4,3.2) -- (8.5,3.2) -- (9.6,3.2);
\node[vertex,label={[coordlabel]below:$0$}] at (7.4,3.2) {};
\node[vertex,label={[coordlabel]below:$1$}] at (8.5,3.2) {};
\node[vertex,label={[coordlabel]below:$2$}] at (9.6,3.2) {};
\begin{scope}[xshift=0cm]
\node[font=\small\bfseries] at (0,2.2) {(a) Categorical product};
\node at (0,1.55) {$G\times H$\quad(4 edges)};
\coordinate (a00) at (-1.1,.6);
\coordinate (a01) at (0,.6);
\coordinate (a02) at (1.1,.6);
\coordinate (a10) at (-1.1,-.6);
\coordinate (a11) at (0,-.6);
\coordinate (a12) at (1.1,-.6);
\draw[edge] (a00) -- (a11);
\draw[edge,preaction={draw=white,line width=2.5pt}] (a01) -- (a10);
\draw[edge] (a01) -- (a12);
\draw[edge,preaction={draw=white,line width=2.5pt}] (a02) -- (a11);
\foreach \v/\pair/\where in {a00/{(0,0)}/above,a01/{(0,1)}/above,
 a02/{(0,2)}/above,a10/{(1,0)}/below,a11/{(1,1)}/below,a12/{(1,2)}/below}{
 \node[vertex,label={[coordlabel]\where:$\pair$}] at (\v) {};
}
\node[explanation] at (0,-1.7) {Adjacent in\\both coordinates};
\node[explanation] at (0,-2.4) {Hedetniemi's product};
\end{scope}
\begin{scope}[xshift=5cm]
\node[font=\small\bfseries] at (0,2.2) {(b) AND (strong) product};
\node at (0,1.55) {$G\boxtimes H$\quad(11 edges)};
\coordinate (b00) at (-1.1,.6);
\coordinate (b01) at (0,.6);
\coordinate (b02) at (1.1,.6);
\coordinate (b10) at (-1.1,-.6);
\coordinate (b11) at (0,-.6);
\coordinate (b12) at (1.1,-.6);
\draw[edge] (b00) -- (b11);
\draw[edge,preaction={draw=white,line width=2.5pt}] (b01) -- (b10);
\draw[edge] (b01) -- (b12);
\draw[edge,preaction={draw=white,line width=2.5pt}] (b02) -- (b11);
\draw[strongextra] (b00) -- (b10);
\draw[strongextra] (b01) -- (b11);
\draw[strongextra] (b02) -- (b12);
\draw[strongextra] (b00) -- (b01);
\draw[strongextra] (b01) -- (b02);
\draw[strongextra] (b10) -- (b11);
\draw[strongextra] (b11) -- (b12);
\foreach \v/\pair/\where in {b00/{(0,0)}/above,b01/{(0,1)}/above,
 b02/{(0,2)}/above,b10/{(1,0)}/below,b11/{(1,1)}/below,b12/{(1,2)}/below}{
 \node[vertex,label={[coordlabel]\where:$\pair$}] at (\v) {};
}
\node[explanation] at (0,-1.7) {Equal or adjacent\\in each coordinate};
\node[explanation] at (0,-2.4) {Powers define $\Theta$};
\end{scope}
\begin{scope}[xshift=10cm]
\node[font=\small\bfseries] at (0,2.2) {(c) OR-product};
\node at (0,1.55) {$G\OR H$\quad(13 edges)};
\coordinate (c00) at (-1.1,.6);
\coordinate (c01) at (0,.6);
\coordinate (c02) at (1.1,.6);
\coordinate (c10) at (-1.1,-.6);
\coordinate (c11) at (0,-.6);
\coordinate (c12) at (1.1,-.6);
\draw[edge] (c00) -- (c11);
\draw[edge,preaction={draw=white,line width=2.5pt}] (c01) -- (c10);
\draw[edge] (c01) -- (c12);
\draw[edge,preaction={draw=white,line width=2.5pt}] (c02) -- (c11);
\draw[strongextra] (c00) -- (c10);
\draw[strongextra] (c01) -- (c11);
\draw[strongextra] (c02) -- (c12);
\draw[strongextra] (c00) -- (c01);
\draw[strongextra] (c01) -- (c02);
\draw[strongextra] (c10) -- (c11);
\draw[strongextra] (c11) -- (c12);
\draw[orextra,preaction={draw=white,line width=2.5pt}] (c00) -- (c12);
\draw[orextra,preaction={draw=white,line width=2.5pt}] (c02) -- (c10);
\foreach \v/\pair/\where in {c00/{(0,0)}/above,c01/{(0,1)}/above,
 c02/{(0,2)}/above,c10/{(1,0)}/below,c11/{(1,1)}/below,c12/{(1,2)}/below}{
 \node[vertex,label={[coordlabel]\where:$\pair$}] at (\v) {};
}
\node[explanation] at (0,-1.7) {Adjacent in at least\\one coordinate};
\node[explanation] at (0,-2.4) {Powers define $\C$};
\end{scope}
\node[explanation,text width=12cm] at (5,-3.3)
 {Complementing the OR-product gives the AND-product\\[2pt]
  of the complemented factors.};
\node at (3.1,-4.9)
 {$\overline{G\OR H}=\overline G\boxtimes\overline H
   =\overline{K_2}\boxtimes\overline{P_3}\;=$};
\begin{scope}[xshift=9cm,yshift=-4.9cm]
\coordinate (d00) at (-1.1,.35);
\coordinate (d01) at (0,.35);
\coordinate (d02) at (1.1,.35);
\coordinate (d10) at (-1.1,-.35);
\coordinate (d11) at (0,-.35);
\coordinate (d12) at (1.1,-.35);
\draw[edge] (d00) to[bend right=15] (d02);
\draw[edge] (d10) to[bend left=15] (d12);
\foreach \v/\pair/\where in {d00/{(0,0)}/above,d01/{(0,1)}/above,
 d02/{(0,2)}/above,d10/{(1,0)}/below,d11/{(1,1)}/below,d12/{(1,2)}/below}{
 \node[vertex,label={[coordlabel]\where:$\pair$}] at (\v) {};
}
\end{scope}
\end{tikzpicture}
\caption{Three products of $G=K_2$ and $H=P_3$ on the same six vertices.
Solid edges form the categorical product; dashed edges are added in the
AND-product and dotted edges in the OR-product. Below, complementing
panel~(c) gives $\overline G\boxtimes\overline H$, with two edges and
two isolated vertices. Its factors are complemented, so it differs from
panel~(b). These graphs illustrate the definitions and are not counterexamples.
OpenAI Codex assisted with preparation of the TikZ source.}
\label{fig:products}
\end{figure}
\FloatBarrier

\section{Capacity and the asymptotic spectrum}\label{sec:spectrum}

This appendix establishes the basic capacity properties and spectral
tools used in the main text. It supplies the proof of
\cref{lem:spectral}, together with the facts about joins and thresholds
needed for the finite and asymptotic results.

For $a_r=\omega(X^{\OR r})$, concatenating every word in a largest
clique of $X^{\OR r}$ with every word in a largest clique of
$X^{\OR s}$ gives $a_ra_s$ pairwise distinguishable words of length
$r+s$. Thus $a_{r+s}\ge a_ra_s$, and $1\le a_r\le |V(X)|^r$.
Fekete's lemma applied to $\log a_r$ therefore gives
\begin{equation}\label{eq:capacity-supremum}
 \C(X)=\lim_{r\to\infty}a_r^{1/r}
       =\sup_{r\ge1}a_r^{1/r}.
\end{equation}
In particular, taking the supremum of finite-block rates agrees with
the limiting rate used in \suppref{sec:channelproof}.

Applying a graph homomorphism coordinatewise gives a homomorphism
between the corresponding OR-powers. A homomorphism into a simple
graph is injective on every clique, so
\[
 X\longrightarrow Y\quad\Longrightarrow\quad\C(X)\le\C(Y).
\]
In particular, $\C(K_m)=m$, and the product projections imply
$\C(X\times Y)\le\min\{\C(X),\C(Y)\}$.
For completeness, this is equality when the smaller factor capacity
is $1$ or $2$~\cite[Rem.~5]{Simonyi}. A graph has capacity $1$
exactly when it is edgeless. If the smaller capacity is $2$, both
factors contain an edge, so their product contains an edge and has
capacity at least $2$. The projection bound completes both cases.

The clique and coloring homomorphisms also give
$\omega(X)\le\C(X)\le\chi(X)$. If $f:X\to Y$ is a homomorphism,
then $x\mapsto(x,f(x))$ is a homomorphism $X\to X\times Y$;
together with projection, this proves the equality recalled in the
introduction. If both factors have capacity equal to their clique
number, pair vertices of their maximum cliques to obtain a clique
of size $\min\{\omega(X),\omega(Y)\}$ in $X\times Y$, proving that
positive case as well.

The asymptotic spectrum and the spectral characterization used below
are introduced in \cref{sec:spectral-argument}. We give the proof of
the spectral lemma here, including the interpolation step based on
Vrana's probabilistic refinement.

\begin{proof}[Proof of \cref{lem:spectral}]
Eq.~\eqref{eq:duality}, including attainment, is Zuiddam's spectrum
theorem~\cite[Thm.~1.1(ii)]{Zuiddam}, translated by complementation to
the OR convention. Since
$\overline{X\sqcup Y}=\overline X\join\overline Y$,
Eq.~\eqref{eq:max} is the same translation of Vrana's maximum
rule for joins~\cite[Prop.~4.3]{Vrana}.

We spell out how the spectral interpolation inequality~\eqref{eq:interpolate}
follows from Vrana's probabilistic refinement. Set $f_j(Y)=\varphi_j(\overline Y)$, so that
$f_j$ is a spectral point in the usual confusability convention.
Let $F_j(Y,P)$ be its logarithmic probabilistic refinement, with
logarithms to base two and $P$ a probability distribution on $V(Y)$.
Let $T_{n,\varepsilon}(P)$ be the set of words in $V(Y)^n$ for which
the frequency of each vertex $v$ differs from $P(v)$ by at most
$\varepsilon$. Writing $Y^{\boxtimes n}[T]$ for the subgraph induced
by a set of words $T$, the refinement is
\[
 F_j(Y,P)=\lim_{\varepsilon\downarrow0}\limsup_{n\to\infty}
 \frac1n\log_2 f_j\bigl(Y^{\boxtimes n}[T_{n,\varepsilon}(P)]\bigr).
\]
It thus measures the asymptotic logarithmic value per coordinate
when the empirical vertex distribution approaches $P$.
Vrana's Theorems~1.1--1.2~\cite{Vrana} identify spectral points with
their logarithmic probabilistic refinements and give the inverse formula
\[
 f_j(Y)=\max_P2^{F_j(Y,P)}.
\]
The same characterization shows that these refinements form a convex
set: its defining inequalities are affine in the refinement.
Consequently $F=\sum_jw_jF_j$ is the refinement of a single spectral
point $f$. This assertion concerns functions of all pairs $(Y,P)$,
so the same $f$ is used for every graph.
Returning to the OR convention, put $\psi(X)=f(\overline X)$. Then
\[
 \psi(X)=\max_P2^{\sum_jw_jF_j(\overline X,P)}
 \le\prod_j\left(\max_P2^{F_j(\overline X,P)}\right)^{w_j}
 =\prod_j\varphi_j(X)^{w_j}.
\]
The same $\psi$ works for all $X$. This does not require the pointwise
geometric mean of the original spectral points to be spectral.
The result numbers in this proof refer to the arXiv versions listed
in the bibliography.
\end{proof}

\Needspace{18\baselineskip}
Complementary fractional Haemers bounds turn matrix-rank estimates
into spectral upper bounds. A $d$-dimensional fitting matrix for a graph $Y$ over a field $\mathbb K$
is a block matrix $M$, indexed by $V(Y)\times V(Y)$, with $d\times d$
identity matrices as diagonal blocks and zero blocks on distinct
nonadjacent pairs. The fractional
Haemers bound, introduced by Blasiak~\cite[Sec.~2.2]{Blasiak},
is the infimum of $\rank_{\mathbb K}(M)/d$ over all such
matrices and all positive integers $d$.

\begin{lemma}[Fractional Haemers spectral bounds]\label{lem:haemers}
For every field $\mathbb K$, the parameter
\[
 \phif{\mathbb K}(X)=\mathcal H_f^{\mathbb K}(\overline X)
\]
belongs to $\spec$. In particular, if a scalar matrix $M$ indexed by
$V(X)$ has diagonal entries $1$ and vanishes on edges of $X$, then
\[
 \phif{\mathbb K}(X)\le\rank_{\mathbb K}(M).
\]
\end{lemma}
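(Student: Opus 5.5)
The first assertion will be obtained by translating a known result to the OR convention through complementation. Zuiddam~\cite{Zuiddam} (building on Blasiak's definition~\cite{Blasiak}) shows that for every field $\mathbb K$ the fractional Haemers bound $\mathcal H_f^{\mathbb K}$ is a point of the asymptotic spectrum of graphs in the confusability convention. That spectrum is built from disjoint union, the strong product, normalization at $K_1$, and monotonicity under homomorphisms between complements. Using the bijection $\varphi(G)=f(\overline G)$ from \cref{sec:spectral-argument}, I will check the four axioms for $\phif{\mathbb K}$ one at a time.

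\emph{Normalization} holds because $\overline{K_1}=K_1$ and $\mathcal H_f^{\mathbb K}(K_1)=1$. \emph{Join additivity} follows from $\overline{G\join H}=\overline G\sqcup\overline H$ together with additivity of $\mathcal H_f$ under disjoint union. For the upper bound, take the block-diagonal sum of fitting matrices and rescale the two block sizes to a common $d$. For the lower bound, restrict a fitting matrix to each component and use that a matrix whose off-diagonal blocks between components vanish has rank equal to the sum of the ranks of its diagonal blocks. \emph{OR multiplicativity} follows from $\overline{G\OR H}=\overline G\boxtimes\overline H$ and multiplicativity of $\mathcal H_f$ under the strong product. \emph{Monotonicity} under $G\to H$ means monotonicity of $\mathcal H_f$ under homomorphisms $\overline G\to\overline H$ between complements, which is exactly the preorder in Zuiddam's setting.

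Of these, multiplicativity under the strong product, specifically its lower bound, is the nontrivial step. The tensor product of fitting matrices gives submultiplicativity immediately. The matching lower bound is the content of the cited theorem, and I will rely on it rather than reprove it.

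For the ``in particular'' clause, I will unpack the definition with $d=1$. A fitting matrix for $\overline X$ has identity diagonal blocks and vanishes on distinct pairs that are nonadjacent in $\overline X$, that is, on edges of $X$. So the scalar matrix $M$ in the statement is a $1$-dimensional fitting matrix for $\overline X$. The infimum defining $\mathcal H_f^{\mathbb K}(\overline X)$ is then at most $\rank_{\mathbb K}(M)/1$, which gives $\phif{\mathbb K}(X)\le\rank_{\mathbb K}(M)$.
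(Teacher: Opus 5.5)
Your proposal is correct and takes the same route as the paper: you import spectral membership of $\mathcal H_f^{\mathbb K}$ from Zuiddam's result (the paper credits the nontrivial supermultiplicativity to Bukh and Cox) and translate it by complementation, then recognize $M$ as a block-size-one fitting matrix for $\overline X$. Your axiom-by-axiom checks are extra detail the paper omits; in the monotonicity step, note that the hypothesis needed for $\mathcal H_f^{\mathbb K}(\overline G)\le\mathcal H_f^{\mathbb K}(\overline H)$ in Zuiddam's preorder is a homomorphism between the complements of $\overline G$ and $\overline H$, i.e.\ $G\to H$ itself, not a map $\overline G\to\overline H$.
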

\begin{proof}
Spectral membership is the complementary form of the fractional
Haemers result recorded in the arXiv version of~\cite[Sec.~5.2.2]{Zuiddam}; its
multiplicativity is proved in~\cite[Thm.~3]{BukhCox}. The matrix in the statement
is an admissible fitting matrix for $\overline X$ with block size $1$.
\end{proof}

\paragraph{Fractional colourings.}\label{def:fractional-colouring}
Let $\mathcal I(X)$ be the family of independent sets of $X$, that is,
vertex sets containing no edge. A \emph{fractional colouring} assigns
weights $w_I\ge0$ to these sets such that
$\sum_{I\ni v}w_I\ge1$ for every vertex $v$.
The fractional chromatic number $\chi_f(X)$ is the minimum total
weight $\sum_{I\in\mathcal I(X)}w_I$.
A \emph{$d$-fold colouring} with $k$ colours assigns a $d$-element
subset of $\{1,\ldots,k\}$ to
each vertex, with disjoint subsets at adjacent vertices.
Equivalently, $\chi_f(X)=\inf k/d$ over positive integers $k,d$
for which such a colouring exists.
Indeed, such a colouring gives each colour class weight $1/d$.
Conversely, the finite covering linear program has a rational optimal
solution. Clearing its denominators produces colour classes covering
each vertex at least $d$ times; discarding excess colour memberships
at each vertex gives a $d$-fold colouring with the same ratio $k/d$.

For every field $\mathbb K$, these colourings give
\begin{equation}\label{eq:haemers-colouring}
 \phif{\mathbb K}(X)\le\chi_f(X).
\end{equation}
For a $d$-fold colouring with $k$ colours, let $P_v$ have as columns
the $d$ standard coordinate vectors of $\mathbb K^k$ assigned to $v$.
The blocks $P_u^{\mathsf T}P_v$ form a fitting matrix for
$\overline X$ of rank at most $k$, with identity diagonal blocks and
zero blocks on edges of $X$. Taking the infimum of $k/d$ proves
\eqref{eq:haemers-colouring}.

An \emph{automorphism} is a vertex permutation preserving adjacency.
A graph is \emph{vertex-transitive} if an automorphism carries any
specified vertex to any other.
One useful construction averages a nonempty independent $a$-set $I$ in a
vertex-transitive graph $X$ on $n$ vertices over its automorphism
group $\Gamma$. Each vertex occurs in exactly $|\Gamma|a/n$ of the
sets $g(I)$, counted with multiplicity. Giving each occurrence
weight $n/(|\Gamma|a)$ therefore covers every vertex with total
weight one and yields $\chi_f(X)\le n/a$.

For the edgeless graph $I_m$ on $m\ge1$ vertices, every spectral point
satisfies $\varphi(I_m)=1$, since $I_m$ and $K_1$ are homomorphically
equivalent. Two useful consequences of~\eqref{eq:duality} are
\begin{equation}\label{eq:joincopies}
 \C(\underbrace{X\join\cdots\join X}_{k\text{ copies}}\join K_s)
 =k\C(X)+s
 \qquad(k\ge1,\ s\ge0),
\end{equation}
where $K_0$ is omitted, and
\begin{equation}\label{eq:joinindependent}
 \C(X\join I_m)=\C(X)+1\qquad(m\ge1).
\end{equation}
These follow by taking the minimum of $k\varphi(X)+s$ and
$\varphi(X)+1$, respectively. They do not assert additivity of capacity
under arbitrary joins.

The next lemma relates $f_C(m)=m$ to the product identity for pairs
whose smaller capacity is exactly $m$.

\begin{lemma}[Integer thresholds and exact minima]\label{lem:integer-minimum}
For every positive integer $m$, $f_C(m)=m$ if and only if
\[
 \C(G\times H)=m
 \quad\text{whenever}\quad
 \min\{\C(G),\C(H)\}=m.
\]
\end{lemma}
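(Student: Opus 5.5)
We prove the two implications separately. Both rest on the projection bound $\C(G\times H)\le\min\{\C(G),\C(H)\}$ and on the fact that $\C(K_m)=m$. The key tool for one direction is a reduction: given a pair whose capacities are at least $m$, we replace one factor by $K_m$ and land in the setting of the exact-minimum statement.

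\emph{($\Leftarrow$).} Assume the exact-minimum identity at $m$, and let $G,H$ satisfy $\C(G),\C(H)\ge m$. Since $\C(G)\ge m$, the graph $G$ has a clique of size $m$ in some OR-power. Rather than extracting a homomorphism from that, we use $K_m$ as an auxiliary factor.

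Consider first the pair $(G,K_m)$. Its smaller capacity is exactly $m$, so the hypothesis gives $\C(G\times K_m)=m$. The same holds for $(H,K_m)$.

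Next consider the pair $(G\times K_m,\,H)$. Its smaller capacity is $\min\{m,\C(H)\}=m$, so the hypothesis gives $\C(G\times K_m\times H)=m$. The categorical product is associative and commutative up to isomorphism, and projection gives a homomorphism $G\times H\times K_m\to G\times H$. By monotonicity, $\C(G\times H)\ge m$.

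Since this holds for every admissible pair, $f_C(m)\ge m$. The reverse inequality follows by taking $G=H=K_m$, which gives $\C(K_m\times K_m)\le m$. Hence $f_C(m)=m$. In fact the infimum is attained: projecting to the diagonal gives a homomorphism $K_m\to K_m\times K_m$, so $\C(K_m\times K_m)=m$.

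\emph{($\Rightarrow$).} Assume $f_C(m)=m$, and let $\min\{\C(G),\C(H)\}=m$. Both capacities are then at least $m$, so by the definition of $f_C$ as an infimum, $\C(G\times H)\ge f_C(m)=m$. The projection bound gives $\C(G\times H)\le m$, so equality holds.

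Neither step is a real obstacle; both are short. The point needing care is the ($\Leftarrow$) reduction, which requires the intermediate graph $G\times K_m$ to have capacity exactly $m$ before we can reapply the hypothesis. That is supplied by the first application of the hypothesis, together with associativity of $\times$ and homomorphism monotonicity of $\C$. Integrality of $m$ is used only through $\C(K_m)=m$, which is what makes the complete graph a legitimate witness of exact minimum $m$.
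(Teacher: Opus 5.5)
Your proof is correct and follows essentially the same route as the paper: the forward direction uses the infimum definition plus the projections, and the converse applies the hypothesis first to $(G,K_m)$ and then to $(G\times K_m,H)$, projects $G\times K_m\times H\to G\times H$, and uses the pair $(K_m,K_m)$ for the reverse inequality. The remarks about cliques in OR-powers and about the pair $(H,K_m)$ are never used and can be dropped.
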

\begin{proof}
If $f_C(m)=m$, its definition gives the required lower bound, and
the coordinate projections give the upper bound.
Conversely, suppose the stated identity holds, and let
$\C(G),\C(H)\ge m$. Applying it to $(G,K_m)$ shows that
$A=G\times K_m$ has capacity $m$. Applying it again to $(A,H)$
gives $\C(A\times H)=m$. The homomorphism
$((g,i),h)\mapsto(g,h)$ from $A\times H$ to $G\times H$ then gives
$\C(G\times H)\ge m$. Taking the infimum yields $f_C(m)\ge m$,
while the pair $(K_m,K_m)$ gives the reverse inequality.
\end{proof}

We also record the threshold facts used in the main text.
The admissible pairs in~\eqref{eq:extremal} form a decreasing family
as $t$ increases, so $f_C$ is nondecreasing; moreover
$1\le f_C(t)\le\lceil t\rceil$, using the pair
$(K_{\lceil t\rceil},K_{\lceil t\rceil})$ for the upper bound.
Thus $f_C(t)\to\infty$ is equivalent to the uniform lower-bound
formulation~\eqref{eq:weak-uniform}. For the converse implication, apply
that formulation with $B+1$ before taking the infimum, to obtain
$f_C(T)\ge B+1>B$.
If the limit is not infinite, monotonicity gives a finite bound $B$
on $f_C$. For each integer $n$, the definition of the infimum supplies
graphs of capacities at least $n$ whose product has capacity less
than $f_C(n)+1\le B+1$. Conversely, any sequence with diverging
factor capacities and uniformly bounded product capacities bounds
$f_C$ by the same constant. If the product capacities fail to diverge,
we may first pass to a subsequence on which they are uniformly bounded.
This gives the equivalence with the sequence formulation in
\cref{conj:weak}. No attainment of the infimum is required.

\Needspace{14\baselineskip}
\section{Product bounds from interpolation}\label{sec:interpolation}

This appendix turns spectral interpolation into upper bounds for
categorical products. These bounds support the
finite and asymptotic constructions in \cref{sec:simple,sec:asymptotic};
we also show how to make the first factor connected.

The categorical product distributes over disjoint union. Fix two
spectral points $h,q\in\spec$ and suppose
$h(A_i\times H)\le h_i$ and $q(A_i\times H)\le q_i$ for $i=0,1$.
For any $\lambda\in[0,1]$, the spectral interpolation
inequality~\eqref{eq:interpolate} supplies a single
$\psi\in\spec$ with $\psi(X)\le h(X)^\lambda q(X)^{1-\lambda}$
for every graph $X$. Combining this with the maximum
rule~\eqref{eq:max} gives the weighted product bound
\begin{equation}\label{eq:weighted-main}
 \C\bigl((A_0\sqcup A_1)\times H\bigr)
 \le\max_{i=0,1}h_i^\lambda q_i^{1-\lambda}
 \le\max_{i=0,1}\{\lambda h_i+(1-\lambda)q_i\}.
\end{equation}
Each $h_i$ and $q_i$ can be bounded by projecting to either factor.
This lets different bounds control different components.

For a complete second factor $K_m$, choose spectral points attaining
the two component capacities.
Projection to $K_m$ supplies a second bound for each component product.
Interpolating with freely chosen weights combines these bounds into
one spectral point.

\begin{proposition}[Weighted two-component product bound]\label{prop:two}
For graphs $A,B$, every integer $m\ge1$ and every $\lambda\in[0,1]$,
\[
 \C\bigl((A\sqcup B)\times K_m\bigr)
 \le\max\bigl\{\C(A)^\lambda m^{1-\lambda},
                 m^\lambda\C(B)^{1-\lambda}\bigr\}.
\]
In particular, taking $\lambda=1/2$ gives
\[
 \C\bigl((A\sqcup B)\times K_m\bigr)
 \le\sqrt{m\max\{\C(A),\C(B)\}}.
\]
\end{proposition}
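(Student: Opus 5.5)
The plan is to apply the weighted product bound \eqref{eq:weighted-main} with $A_0=A$, $A_1=B$, $H=K_m$. The two spectral points $h,q$ are chosen via the attained minimum in \eqref{eq:duality}, with $h(A)=\C(A)$ and $q(B)=\C(B)$. The first step uses distributivity of the categorical product over disjoint union:
\[
 (A\sqcup B)\times K_m=(A\times K_m)\sqcup(B\times K_m).
\]
This holds because adjacency in $\times$ requires adjacency of first coordinates, and there are no edges between $A$ and $B$.

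Next I bound each component value by homomorphism monotonicity applied to the coordinate projections. For $h$, projecting $A\times K_m\to A$ gives $h(A\times K_m)\le h(A)=\C(A)$. Projecting $B\times K_m\to K_m$ gives $h(B\times K_m)\le h(K_m)=m$, using $\varphi(K_m)=m$ for every spectral point (normalization plus join additivity). Symmetrically, $q(A\times K_m)\le m$ and $q(B\times K_m)\le\C(B)$. So $h_0=\C(A)$, $h_1=m$, $q_0=m$, $q_1=\C(B)$.

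Then, for the given $\lambda$, I invoke the interpolation inequality \eqref{eq:interpolate} with $k=2$ and weights $(\lambda,1-\lambda)$. This yields one $\psi\in\spec$ with $\psi(X)\le h(X)^\lambda q(X)^{1-\lambda}$ for all $X$, so
\[
\psi(A\times K_m)\le\C(A)^\lambda m^{1-\lambda},\qquad \psi(B\times K_m)\le m^\lambda\C(B)^{1-\lambda}.
\]
The maximum rule \eqref{eq:max} for this fixed $\psi$ bounds $\psi$ of the disjoint union by the larger of these. Finally, $\C\le\psi$ (every spectral point bounds capacity, or directly \eqref{eq:duality}) gives the first claim. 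Setting $\lambda=1/2$ turns both terms into $\sqrt{m\C(A)}$ and $\sqrt{m\C(B)}$, whose maximum is $\sqrt{m\max\{\C(A),\C(B)\}}$.

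The only delicate point is making sure a single $\psi$ controls both components simultaneously, since the maximum rule fails for capacity itself. This is exactly what \cref{lem:spectral} provides, because the same $\psi$ satisfies \eqref{eq:interpolate} for every graph. The edge cases $\lambda\in\{0,1\}$ need no interpolation: one can take $\psi=q$ or $\psi=h$ directly, with the convention $x^0=1$.
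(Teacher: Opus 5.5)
Your proposal is correct and follows essentially the same route as the paper's proof. Both pick spectral points attaining $\C(A)$ and $\C(B)$, bound each component product by projecting to $A$, $B$ or $K_m$, interpolate with weights $\lambda,1-\lambda$ to get a single $\psi$, and then apply the maximum rule to $(A\times K_m)\sqcup(B\times K_m)$ together with $\C\le\psi$.
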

\begin{proof}
Choose $\varphi_1,\varphi_2\in\spec$ with
$\varphi_1(A)=\C(A)$ and $\varphi_2(B)=\C(B)$.
Apply the spectral interpolation inequality~\eqref{eq:interpolate}
with weights $\lambda,1-\lambda$ to obtain $\psi$.
The two projections imply
\begin{align*}
 \psi(A\times K_m)&\le
 \varphi_1(A\times K_m)^\lambda\varphi_2(A\times K_m)^{1-\lambda}
 \le\C(A)^\lambda m^{1-\lambda},\\
 \psi(B\times K_m)&\le
 \varphi_1(B\times K_m)^\lambda\varphi_2(B\times K_m)^{1-\lambda}
 \le m^\lambda\C(B)^{1-\lambda}.
\end{align*}
Since the categorical product distributes over disjoint union,
\[
 \C((A\sqcup B)\times K_m)
 \le\psi((A\sqcup B)\times K_m)
 =\max\{\psi(A\times K_m),\psi(B\times K_m)\}.
\]
The preceding bounds prove the weighted inequality, and $\lambda=1/2$
gives the square-root bound. The same spectral point $\psi$ must
control both components for this step to apply.
\end{proof}

For the asymptotic construction, each component may have its own
exceptional spectral points, where the common bound is not assumed.
Projection to $K_m$ controls these exceptions.

\begin{lemma}[Several spectral bounds]\label{lem:many}
Let $A_1,\ldots,A_N$ be graphs and
$\varphi_1,\ldots,\varphi_k\in\spec$. Suppose $1\le h<k$, $D\ge1$,
and for each $i$ there is a set $S_i\subseteq\{1,\ldots,k\}$ of
cardinality exactly $h$ such that
\[
 \varphi_j(A_i)\le D\qquad(j\notin S_i).
\]
Then, for every integer $m\ge1$,
\[
 \C\left(\left(\bigsqcup_{i=1}^N A_i\right)\times K_m\right)
 \le D^{1-h/k}m^{h/k}.
\]
\end{lemma}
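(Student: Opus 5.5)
The plan is to apply the spectral interpolation inequality~\eqref{eq:interpolate} once, with equal weights $w_j=1/k$, to the given points $\varphi_1,\ldots,\varphi_k$. This produces a single $\psi\in\spec$ with
\[
 \psi(X)\le\prod_{j=1}^k\varphi_j(X)^{1/k}\qquad\text{for every graph }X.
\]
Because $\psi$ does not depend on $X$, it can be evaluated simultaneously on all the component products $A_i\times K_m$.

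Fix $i$ and bound each factor $\varphi_j(A_i\times K_m)$ by projecting to one factor of the product. For $j\notin S_i$, use the projection $A_i\times K_m\to A_i$ and homomorphism monotonicity to get $\varphi_j(A_i\times K_m)\le\varphi_j(A_i)\le D$. For $j\in S_i$, use the projection to $K_m$ to get $\varphi_j(A_i\times K_m)\le\varphi_j(K_m)=m$. Since $|S_i|=h$ exactly, this gives
\[
 \psi(A_i\times K_m)\le D^{(k-h)/k}m^{h/k}=D^{1-h/k}m^{h/k}.
\]
The bound is the same for every $i$, even though the exceptional sets $S_i$ vary with $i$. This uniformity is the point of requiring the cardinality of each $S_i$ to be exactly $h$. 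If some $S_i$ had fewer than $h$ elements, we could enlarge it, but only if $D\le m$. The exact-cardinality hypothesis avoids any comparison between $D$ and $m$.

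Finally, the categorical product distributes over disjoint union:
\[
 \Bigl(\bigsqcup_iA_i\Bigr)\times K_m=\bigsqcup_i(A_i\times K_m),
\]
since adjacency in the product forces the first coordinates to be adjacent, hence to lie in the same component. Applying the maximum rule~\eqref{eq:max} repeatedly, by induction on $N$, together with $\C\le\psi$ from~\eqref{eq:duality} gives
\[
 \C\Bigl(\Bigl(\bigsqcup_iA_i\Bigr)\times K_m\Bigr)\le\psi\Bigl(\bigsqcup_i(A_i\times K_m)\Bigr)=\max_i\psi(A_i\times K_m)\le D^{1-h/k}m^{h/k}.
\]

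No step is a real obstacle. The only point needing care is that a single interpolated $\psi$ controls every component, and~\eqref{eq:interpolate} guarantees this because it holds for all graphs at once.
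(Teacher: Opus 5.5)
Your proof is correct and matches the paper's argument step for step: equal-weight interpolation to a single $\psi$, projection to $A_i$ for $j\notin S_i$ and to $K_m$ for $j\in S_i$, distributivity over disjoint union, the maximum rule, and $\C\le\psi$. One aside is wrong, though the proof does not use it: enlarging an exceptional set $S_i$ only weakens the hypothesis $\varphi_j(A_i)\le D$ for $j\notin S_i$, so it is always allowed, and $|S_i|\le h$ would suffice without any comparison between $D$ and $m$.
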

\begin{proof}
Apply the spectral interpolation inequality~\eqref{eq:interpolate}
to the $k$ spectral points with equal weights to obtain $\psi$.
On $A_i\times K_m$, projection to $A_i$ gives a bound $D$ for each of the
$k-h$ indices outside $S_i$. Projection to $K_m$ gives a bound $m$
for each of the remaining $h$ indices. Hence, for every $i$,
\[
 \psi(A_i\times K_m)
 \le\prod_{j=1}^k\varphi_j(A_i\times K_m)^{1/k}
 \le(D^{k-h}m^h)^{1/k}.
\]
The maximum rule~\eqref{eq:max} gives the same bound on the disjoint
union of these products, and $\C\le\psi$ proves the claim.
\end{proof}

Our constructions initially give disjoint unions. Adding one universal
vertex makes them connected, at an additive cost of at most one in
the product bound.

\begin{lemma}[Adding a universal vertex]\label{lem:connected}
For every graph $X$ and every integer $m\ge1$, the connected graph
$X^+=X\join K_1$ satisfies
\[
 \C(X^+)=\C(X)+1,
 \qquad
 \C(X^+\times K_m)\le\C(X\times K_m)+1.
\]
\end{lemma}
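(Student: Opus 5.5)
The first identity is the case $k=1$, $s=1$ of \eqref{eq:joincopies}, since $X^+=X\join K_1$. For completeness, the argument is as follows. Every $\varphi\in\spec$ satisfies $\varphi(X\join K_1)=\varphi(X)+1$ by join additivity and normalization. By \eqref{eq:duality}, $\C(X^+)$ is the minimum of $\varphi(X)+1$ over $\varphi\in\spec$, which equals $\C(X)+1$, and the minimum is attained. Connectedness holds because the new vertex is adjacent to every vertex of $X$.

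For the product bound, we decompose $X^+\times K_m$ as follows. Write $V(K_1)=\{u\}$. The product vertices split into $X\times K_m$ and the set $\{(u,i):1\le i\le m\}$. Since $u$ is not adjacent to itself, the vertices $(u,i)$ form an independent set, which we call $I_m$. Each $(u,i)$ is adjacent to $(x,j)$ exactly when $j\ne i$. So $X^+\times K_m$ is not a join, but it is a spanning subgraph of $(X\times K_m)\join I_m$. Hence the identity map is a homomorphism $X^+\times K_m\to (X\times K_m)\join I_m$. By homomorphism monotonicity of capacity and \eqref{eq:joinindependent},
\[
 \C(X^+\times K_m)\le \C\bigl((X\times K_m)\join I_m\bigr)=\C(X\times K_m)+1.
\]
The identity \eqref{eq:joinindependent} itself follows from \eqref{eq:duality}: each spectral point satisfies $\varphi(Y\join I_m)=\varphi(Y)+\varphi(I_m)=\varphi(Y)+1$, because $I_m$ and $K_1$ are homomorphically equivalent.

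No step here is a real obstacle. The one point to check is the edge structure: the omitted edges are those between $(u,i)$ and $(x,i)$. Removing them only makes the graph smaller, so the homomorphism goes in the needed direction. The resulting inequality is an upper bound, not an equality, which is exactly what the statement requires.
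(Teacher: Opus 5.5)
Your proof is correct and matches the paper's argument: the equality is the case $k=s=1$ of \eqref{eq:joincopies}, and the product bound comes from viewing $X^+\times K_m$ as a spanning subgraph of $(X\times K_m)\join I_m$, then applying homomorphism monotonicity and \eqref{eq:joinindependent}. You also check explicitly that the only missing edges are those between $(u,i)$ and $(x,i)$, a detail the paper leaves implicit.
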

\begin{proof}
The equality is~\eqref{eq:joincopies}. The graph $X^+\times K_m$ is a
spanning subgraph of $(X\times K_m)\join I_m$. The vertices above the
new universal vertex form an independent set of size $m$. Monotonicity
and~\eqref{eq:joinindependent} prove the upper bound.
\end{proof}

\Needspace{15\baselineskip}
\section{Proof of Theorem~\ref*{thm:finite}: counterexamples with a complete factor}\label{sec:finiteproof}

This appendix supplies the proofs for the finite counterexamples with
a complete factor in \cref{sec:simple}. We first establish the
binary graph estimates shared by the constructions, then prove the
simple example, \cref{thm:finite} and \cref{cor:spectral}.

\subsection{Binary orthogonality graphs}\label[subsection]{sec:binaryproof}

For even $r\ge2$, let $U_r$ be the graph on the odd-weight vectors of
$\F_2^r$, with distinct $x,y$ adjacent when $x\cdot y=0$ in $\F_2$.
Here the \emph{Hamming weight} of a vector is its number of nonzero
coordinates.
The introductory example uses $r=8$, whereas the construction for
\cref{thm:finite} and the real-threshold refinements use $r=6$.
The next lemma determines the
capacities of $U_r$ and its complement by matching explicit cliques
with upper bounds valid for every OR-power.
We also record the theta bounds needed for spectral interpolation.
In our convention, Lov\'asz's orthonormal-representation formula
bounds $\overline\vartheta(X)=\vartheta(\overline X)$ by
$\max_x|\langle u,v_x\rangle|^{-2}$ whenever $u,v_x$ are unit vectors
and $v_x\perp v_y$ for every edge $xy$ of $X$~\cite{Lovasz}.
The fixed unit reference vector $u$ is called the \emph{handle}.

\begin{lemma}[Binary orthogonality graphs]\label{lem:binary}
For every even integer $r\ge2$,
\[
 \C(U_r)=r,
 \qquad \C(\overline{U_r})=\vartheta(U_r)=2^{r/2-1}.
\]
\end{lemma}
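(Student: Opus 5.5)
Each of the two capacities is pinned down by an explicit clique, giving a lower bound through \eqref{eq:capacity-supremum}, and a spectral point from \cref{lem:haemers} or the Lov\'asz theta number, giving a matching upper bound.

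\textbf{The capacity of $U_r$.} For the lower bound we need $r$ pairwise orthogonal odd-weight vectors. The standard basis vectors $e_1,\dots,e_r$ work: each has weight one and $e_i\cdot e_j=0$ for $i\ne j$. So $\omega(U_r)\ge r$ and $\C(U_r)\ge r$.

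For the upper bound we use \cref{lem:haemers} over $\F_2$ with the Gram matrix $M=(x\cdot y)_{x,y}$ indexed by the odd-weight vectors. Its diagonal entries are $x\cdot x=\mathrm{wt}(x)\bmod 2=1$. It vanishes on edges of $U_r$, because edges are exactly the pairs with $x\cdot y=0$. Since $M$ factors through $\F_2^r$, its rank is at most $r$. Hence $\phif{\F_2}(U_r)\le r$, and $\C(U_r)\le r$.

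\textbf{The capacity of $\overline{U_r}$: lower bound.} Here edges are pairs with $x\cdot y=1$. We want a clique of size $2^{r/2-1}$ among odd-weight vectors with pairwise inner product $1$. Pair the coordinates as $\{2i-1,2i\}$ for $i=1,\dots,r/2$, and let $f_i=e_{2i-1}+e_{2i}$. Since $f_i\cdot f_i=0$ and $f_i\cdot f_j=0$ for $i\ne j$, these span a totally isotropic subspace $S$ of dimension $r/2$. Every $s\in S$ has even weight.

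Consider the vectors $e_1+s$ with $s\in S$ and $s$ having zero component on the pair $\{1,2\}$. This gives $2^{r/2-1}$ vectors, each of odd weight. For two of them, $(e_1+s)\cdot(e_1+s')=1+e_1\cdot s'+s\cdot e_1+s\cdot s'=1$, since $e_1\cdot s=0$ for all such $s$ and $S$ is isotropic. So they form a clique of the required size.

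\textbf{The capacity of $\overline{U_r}$: upper bound.} We need $\overline\vartheta(\overline{U_r})=\vartheta(U_r)\le 2^{r/2-1}$, using the orthonormal-representation formula for $X=\overline{U_r}$. That formula requires unit vectors with $v_x\perp v_y$ whenever $x\cdot y=1$, and a handle $u$ with $|\langle u,v_x\rangle|^{-2}\le 2^{r/2-1}$ for every $x$.

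A natural first attempt uses the signs $(-1)^{x\cdot z}$, which involve the bilinear form. Note that $(-1)^{x\cdot y}=-1$ exactly on edges of $\overline{U_r}$. So we would seek real vectors $v_x$ with $\langle v_x,v_y\rangle$ an affine function of $(-1)^{x\cdot y}$ that vanishes at $-1$. The choice $\langle v_x,v_y\rangle=\tfrac12\bigl(1+(-1)^{x\cdot y}\bigr)$ has the right zeros, but we must still check that this matrix is positive semidefinite with diagonal $1$.

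Instead we plan to realize the vectors through a quadratic-form / Clifford-algebra construction. The graph is a classical orthogonality-type graph, and $\vartheta$ can be computed by symmetry. Both $U_r$ and its complement are invariant under the orthogonal group of the form restricted to odd-weight vectors, which acts transitively on them. So we would compute $\vartheta(U_r)$ via the eigenvalues of the association scheme, using the vertex-transitive formula $\vartheta=-n\lambda_{\min}/(\lambda_{\max}-\lambda_{\min})$ applied to $U_r$.

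The graph $U_r$ has $n=2^{r-1}$ vertices. Its eigenvalues follow from character sums of $(-1)^{x\cdot y}$ restricted to odd-weight vectors, which are essentially Kravchuk/Gauss-sum values of size $\pm 2^{r/2-1}$. Substituting them should give exactly $2^{r/2-1}$.

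\textbf{Main obstacle.} The main obstacle is this eigenvalue computation, together with making sure the regular-graph bound is tight or is at least an upper bound. The formula $-n\lambda_{\min}/(\lambda_{\max}-\lambda_{\min})$ is always an upper bound for $\vartheta$ of a regular graph, so an upper bound suffices. With the clique giving the lower bound, this yields $\C(\overline{U_r})=\vartheta(U_r)=2^{r/2-1}$.
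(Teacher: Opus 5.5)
Your argument for $U_r$ (the coordinate-vector clique plus the rank-$r$ matrix $(x\cdot y)$ in \cref{lem:haemers}) is correct and matches the paper. So is your $2^{r/2-1}$-clique in $\overline{U_r}$. The gap is the remaining upper bound $\vartheta(U_r)\le 2^{r/2-1}$, on which $\C(\overline{U_r})\le 2^{r/2-1}$ rests. The ratio bound $-n\lambda_{\min}/(\lambda_{\max}-\lambda_{\min})$ does not give it.

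Put $q=2^{r/2-1}$ and $n=2q^2$. Let $S_{xy}=(-1)^{x\cdot y}$ on odd-weight vectors, and let $P$ be the permutation matrix of $x\mapsto x+\mathbf1$.
\begin{itemize}
\item Since $x\cdot x=1$, the adjacency matrix of $U_r$ is exactly $A=\tfrac12(J+S)$.
\item Since $\mathbf1\cdot y=1$, one has $PS=SP=-S$, so $S$ annihilates the all-ones vector $\mathbf j\in\mathbb R^V$.
\item The character-sum identity gives $S^2=n(I-P)$.
\end{itemize}
Hence $A$ has eigenvalue $q^2$ on $\mathbf j$ and eigenvalues in $\{0,\pm q\}$ on $\mathbf j^\perp$, so $\lambda_{\max}=q^2$ and $\lambda_{\min}=-q$. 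Your formula then yields $2q^2/(q+1)$, which is strictly larger than $q$ for every $r\ge4$. For instance, $U_4\cong K_4\square K_2$ has spectrum $4,2,0,0,0,-2,-2,-2$, so the ratio bound is $8/3$, while $\vartheta(U_4)=2$. Your expectation that the substitution gives exactly $2^{r/2-1}$ is therefore false. An upper bound only suffices if it is the right one.

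The underlying problem is that any certificate in the span of $I$, $A$, $J$ gives at best $2q^2/(q+1)$. Your first attempt is of this kind; in fact $\tfrac12(J+S)=A$, which has zero diagonal and a negative eigenvalue. The isometries you invoke all fix $\mathbf1$, because $x\cdot x=x\cdot\mathbf1$. So they preserve the perfect matching $\{x,x+\mathbf1\}$ inside $U_r$, and the fix is to let the certificate treat these matching edges differently from the other edges.

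The paper does exactly this with
\[
w_x=\bigl(q^{-1/2},\tfrac12(b_x-b_{x+\mathbf1})+h_x/(2q)\bigr),\qquad h_x(z)=(-1)^{x\cdot z}.
\]
\begin{itemize}
\item Its Gram matrix is $I-P+\tfrac1q(J+S)$. By the facts above this has eigenvalues $2q$, $4$ and $0$, so it is positive semidefinite.
\item It has unit diagonal and vanishes whenever $x\cdot y=1$.
\item The handle $(1,0,\ldots,0)$ has squared overlap $1/q$ with every $w_x$.
\end{itemize}
This gives $\vartheta(U_r)\le q$, and tensor powers with Bessel's inequality give $\C(\overline{U_r})\le q$. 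Equivalently, you could solve the theta program in the algebra spanned by $I,P,J,S$. The Hoffman-type bound alone cannot close the argument.
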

\begin{proof}
For $U_r$, we apply Haemers' matrix-rank bound~\cite{Haemers}.
The binary matrix $M_{xy}=x\cdot y$ has diagonal $1$, vanishes on
edges of $U_r$, and has rank at most $r$. A clique in $U_r^{\OR t}$
indexes an identity principal submatrix of $M^{\otimes t}$, so its
size is at most $r^t$. The $r$ coordinate vectors form a clique in
$U_r$, proving the first equality.

For the complementary graph, we use Lov\'asz's orthonormal representation
method~\cite{Lovasz}. We construct unit vectors orthogonal on edges,
all with the same overlap with a fixed unit vector.
Tensoring these vectors will bound cliques in every OR-power.
Write $V=V(U_r)$, $n=2^{r-1}$ and $q=2^{r/2-1}$, so $n=2q^2$.
Since $r$ is even, addition of the all-ones vector $\mathbf1$ pairs
the elements of $V$. In $\mathbb R^V$, let $b_x$ be the coordinate
vector indexed by $x$, and define $h_x(z)=(-1)^{x\cdot z}$ for $z\in V$.
With $\delta$ denoting the Kronecker delta, we have
\[
 \langle h_x,h_y\rangle
 =\sum_{z\in V}(-1)^{(x+y)\cdot z}
 =n\bigl(\delta_{xy}-\delta_{y,x+\mathbf1}\bigr).
\]
The sum is $n$ when $x=y$ and $-n$ when $x+y=\mathbf1$.
Otherwise, choose $i,j$ with $(x+y)_i\ne(x+y)_j$ and pair the
summands at $z$ and $z+e_i+e_j$; these have opposite signs and cancel.

Define
\[
 w_x=\left(q^{-1/2},
       \frac{b_x-b_{x+\mathbf1}}2+\frac{h_x}{2q}\right),
 \qquad u=(1,0,\ldots,0).
\]
Since $h_y(x+\mathbf1)=-h_y(x)$, expansion of the inner product gives
\begin{align*}
 \langle w_x,w_y\rangle
 &=\frac1q+\frac12(\delta_{xy}-\delta_{y,x+\mathbf1})
   +\frac{(-1)^{x\cdot y}}q
   +\frac{n}{4q^2}(\delta_{xy}-\delta_{y,x+\mathbf1})\\
 &=\delta_{xy}-\delta_{y,x+\mathbf1}
   +\frac{1+(-1)^{x\cdot y}}q.
\end{align*}
Thus every $w_x$ is a unit vector and
$|\langle u,w_x\rangle|^2=1/q$. If $x,y$ are adjacent in
$\overline{U_r}$, then $x\cdot y=1$ and $y\notin\{x,x+\mathbf1\}$,
since $x\cdot(x+\mathbf1)=0$. The displayed formula gives
$w_x\perp w_y$.
The orthonormal-representation formula then gives
$\vartheta(U_r)\le q$.

A clique of size $a$ in $\overline{U_r}^{\OR t}$ gives
$a$ orthonormal tensor vectors, each with squared overlap $q^{-t}$
with $u^{\otimes t}$. Bessel's inequality gives
$a q^{-t}\le\|u^{\otimes t}\|^2=1$, so $\C(\overline{U_r})\le q$.

\Needspace{6\baselineskip}
For the matching lower bound, the $q$ vectors
\[
 e_r+\operatorname{span}_{\F_2}
 \{e_1+e_2,e_3+e_4,\ldots,e_{r-3}+e_{r-2}\}
\]
have pairwise dot product $1$ and form a clique in $\overline{U_r}$.
For $r=2$, the span is $\{0\}$.
\end{proof}

\subsection{The simple counterexample}\label[subsection]{sec:nineproof}

\begin{proposition}[The simple counterexample]\label{prop:nine}
Let $F_9=U_8\sqcup\overline{U_8}$. Then
\[
 \C(F_9)=\sqrt{128}>9=\C(K_9),\qquad
 \C(F_9\times K_9)\le\sqrt{72}<9.
\]
\end{proposition}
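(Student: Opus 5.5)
The statement has three parts: the lower bound $\C(F_9)\ge\sqrt{128}$, the matching upper bound $\C(F_9)\le\sqrt{128}$, and the product bound $\C(F_9\times K_9)\le\sqrt{72}$. Two of these are essentially already done.

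\textbf{Lower bound.} This is the two-coordinate code from \cref{sec:simple}. The $128$ words $(x_{U_8},x_{\overline{U_8}})$ form a clique in $F_9^{\OR2}$: distinct labels are adjacent in exactly one of $U_8,\overline{U_8}$. By \eqref{eq:capacity-supremum} this gives $\C(F_9)\ge\sqrt{128}$. Since $\C(K_9)=9<\sqrt{128}$, the minimum factor capacity is $9$.

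\textbf{Product bound.} This is \cref{prop:two} with $A=U_8$, $B=\overline{U_8}$, $m=9$ and $\lambda=1/2$. By \cref{lem:binary} we have $\C(U_8)=8$ and $\C(\overline{U_8})=2^{3}=8$, so
\[
\C(F_9\times K_9)\le\sqrt{9\cdot8}=\sqrt{72}<9 .
\]

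\textbf{Upper bound on $\C(F_9)$.} This is the only new work, and the main obstacle is that capacity is not governed by a maximum rule on disjoint unions. The point of the example is precisely that $\C(F_9)$ exceeds both component capacities. So I need a single spectral point, or a direct tensor argument, whose value on both components is at most $\sqrt{128}$. I would interpolate between the two certificates of \cref{lem:binary}:
\begin{itemize}
\item The Haemers point $\varphi_1=\phif{\F_2}$ satisfies $\varphi_1(U_8)\le8$ by \cref{lem:haemers}, using the matrix $M_{xy}=x\cdot y$.
\item The complement-theta point $\varphi_2=\overline\vartheta$, which lies in $\spec$ as recalled in the introduction, satisfies $\varphi_2(\overline{U_8})=\vartheta(U_8)\le 8$.
\end{itemize}
On the swapped components I only need crude bounds. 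For $\varphi_1(\overline{U_8})$, the identity matrix shows $\varphi_1\le128$ on any $128$-vertex graph. For $\varphi_2(U_8)$, vertex-transitivity gives $\overline\vartheta(U_8)\le\chi_f(U_8)\le|V|/\alpha(U_8)$, which is again at most $128$.

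Applying \eqref{eq:interpolate} with equal weights gives a $\psi\in\spec$ with
\[
\psi(U_8)\le\sqrt{8\cdot128}=32,\qquad \psi(\overline{U_8})\le\sqrt{128\cdot8}=32 .
\]
This is too weak, since we need $\sqrt{128}\approx11.3$. The crude bounds must therefore be sharpened to $\varphi_1(\overline{U_8})\le16$ and $\varphi_2(U_8)\le16$, because $\sqrt{8\cdot16}=\sqrt{128}$. Equivalently:
\begin{itemize}
\item a rank-$16$ binary matrix with unit diagonal vanishing on the edges of $\overline{U_8}$, i.e.\ on the pairs with $x\cdot y=1$;
\item an orthonormal representation of $U_8$ with handle value $16$.
\end{itemize}

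As a first attempt at the orthonormal representation, I would try the vectors $v_x=(\pm1)^{\otimes}$ embedding $x\mapsto x\otimes x$ over $\F_2$, giving a $\binom{8}{2}+8$-type quadratic form. This does not obviously work.

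The cleaner route avoids these sharpened bounds by bounding $\C(F_9)$ directly. Take a clique in $F_9^{\OR t}$. Each coordinate lies in $U_8$ or in $\overline{U_8}$, so group the words by their component pattern $s\in\{0,1\}^t$. For a fixed pattern $s$ with $j$ coordinates in $U_8$, tensoring the Haemers matrix on those coordinates with the Lov\'asz vectors on the remaining ones bounds the number of such words by $8^{j}\cdot8^{t-j}=8^t$. The real issue is bounding words across different patterns: such words lie in different components in some coordinate, and are therefore nonadjacent there.

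I expect this mixed-pattern count to be the crux. My plan for it is to use the fact that for $F_9$, each component is vertex-transitive and has $\alpha\cdot\omega$-type product equal to $128=8\cdot16$. This suggests the bound $\C(F_9)\le\sqrt{\C(U_8)\C(\overline{U_8})\cdot 2}$; I am unsure of the exact mechanism and would verify it against the paper's certificates.
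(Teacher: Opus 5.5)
Your lower bound and your product bound are correct and match the paper: the diagonal $128$-word code, and \cref{prop:two} with $\lambda=1/2$ using $\C(U_8)=\C(\overline{U_8})=8$.

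\textbf{The gap.} The upper bound $\C(F_9)\le\sqrt{128}$ is left unfinished, and the stated equality needs it. You correctly reduce it to the two swapped estimates $\phif{\F_2}(\overline{U_8})\le16$ and $\overline\vartheta(U_8)\le16$, but then you search for the wrong certificates. No rank-$16$ scalar matrix is required. The point $\phif{\F_2}$ is the \emph{fractional} Haemers bound, and \eqref{eq:haemers-colouring} gives $\phif{\F_2}\le\chi_f$.

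Once you prove that $U_8$, and hence $\overline{U_8}$, is vertex-transitive, averaging an independent $8$-set over the automorphism group gives $\chi_f\le128/8=16$ on each graph. The paper proves transitivity using coordinate permutations and the isometries $z\mapsto z+(z\cdot v)v$ with $v\cdot v=0$. The independent $8$-sets are as follows.
\begin{itemize}
\item For $\overline{U_8}$, take the eight coordinate vectors, which form a clique of $U_8$.
\item For $U_8$, take the $8$-clique $e_8+\operatorname{span}\{e_1+e_2,e_3+e_4,e_5+e_6\}$ of $\overline{U_8}$ from \cref{lem:binary}.
\end{itemize}

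Your own ``crude'' bound $\overline\vartheta(U_8)\le\chi_f(U_8)\le|V|/\alpha(U_8)$ already equals $16$, because $\alpha(U_8)=\omega(\overline{U_8})=8$; you evaluated it as $128$. The paper instead obtains $\overline\vartheta(U_8)=16$ from Lov\'asz's identity $\vartheta(U_8)\vartheta(\overline{U_8})=128$ for vertex-transitive graphs. With these bounds, equal-weight interpolation and the maximum rule give $\C(F_9)\le\psi(F_9)\le\sqrt{8\cdot16}=\sqrt{128}$.

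\textbf{The fallback route fails.} You claim that the words with a fixed component pattern number at most $8^{j}8^{t-j}=8^t$. This is false. The $128$-word code uses the single pattern $(U_8,\overline{U_8})$ and exceeds $8^2=64$.

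The reason is that an $\F_2$-rank certificate cannot be tensored with a real orthonormal representation. More decisively, the diagonal clique gives $\varphi(U_8)\varphi(\overline{U_8})\ge128$ for every $\varphi\in\spec$, so no single certificate can be $8$ on both components. The real difficulty therefore lies within mixed patterns, not across them. This is exactly what interpolation with the swapped bounds handles. Your guessed formula $\sqrt{2\,\C(U_8)\C(\overline{U_8})}$ has no supporting mechanism.
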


\begin{proof}[Proof of \cref{prop:nine} and \eqref{eq:nine-base}]
By \cref{lem:binary},
$\C(U_8)=\C(\overline{U_8})=8$. For each vertex $x$ of $U_8$, take the
word $(x_{U_8},x_{\overline{U_8}})$ in $(U_8\sqcup\overline{U_8})^{\OR2}$.
For distinct $x,y$, either $xy\in E(U_8)$ or $xy\in E(\overline{U_8})$,
so the 128 words form a clique. Hence
$\C(F_9)\ge\sqrt{128}>9$.

For the matching upper bound, first note that $U_8$ is vertex-transitive.
Given an odd-weight vector $x$, choose $i$ with $x_i=0$ and put
$v=x+e_i$. Since $v\cdot v=0$, the map
$z\mapsto z+(z\cdot v)v$ preserves dot products and sends $e_i$ to $x$.
Together with coordinate permutations, these maps prove transitivity.
Put $h=\phif{\F_2}$ and $q(X)=\vartheta(\overline X)$.
The rank bound and the eight-clique in \cref{lem:binary} give $h(U_8)=8$,
and the same lemma gives $q(\overline{U_8})=8$.
Averaging that eight-clique, viewed as an independent set of
$\overline{U_8}$, over automorphisms gives a fractional colouring of
weight $128/8=16$, as explained in \suppref{sec:spectrum}. Thus
\eqref{eq:haemers-colouring} gives
$h(\overline{U_8})\le\chi_f(\overline{U_8})\le16$.
Lov\'asz's vertex-transitive identity
$\vartheta(U_8)\vartheta(\overline{U_8})=128$~\cite[Thm.~8]{Lovasz}
also gives $q(U_8)=16$. Applying the spectral interpolation
inequality~\eqref{eq:interpolate} to $h,q$ with equal weights
yields a single $\psi\in\spec$ satisfying
\[
 \psi(U_8)\le\sqrt{8\cdot16},\qquad
 \psi(\overline{U_8})\le\sqrt{16\cdot8}.
\]
The maximum rule now gives
$\C(F_9)\le\psi(F_9)\le\sqrt{128}$, proving equality.
Finally, the equal-weight bound in \cref{prop:two} gives
$\C(F_9\times K_9)\le\sqrt{72}<9$.
\end{proof}

\begin{remark}[Choice of the simple example]
The graph $F_9$ illustrates the argument with two components of capacity
$8$ and equal interpolation weights. The construction in
\cref{sec:seven} gives a smaller first factor at $m=9$ and extends the
range of complete factors to every integer $m\ge7$.
\end{remark}

\subsection{The complete-factor family and its spectral consequence}\label[subsection]{sec:seven}

We begin with a construction whose smaller factor capacity is seven
and obtain the remaining integer cases by adding cliques. This proves
\cref{thm:finite}, and the same spectral point yields \cref{cor:spectral}.

Put $A=U_6$, $D=\overline A$, and let $Q$ be the 5-regular Clebsch
graph on $\F_2^4$, with adjacency differences
$\{e_1,e_2,e_3,e_4,\mathbf1\}$. Set $B=D\join Q$ and
$X_7=A\sqcup B$. The graph $X_7$ has 80 vertices. We will show that
\begin{equation}\label{eq:rho}
 \C(X_7)\ge\rho:=\frac{s+\sqrt{s^2+128}}2>7,
 \qquad s=16^{1/3},
\end{equation}
while both components have capacity at most $20/3<7$.
Weighted spectral interpolation gives the counterexample with $K_7$;
joining the same complete graph to both components gives the full statement.

\begin{lemma}[Clebsch graph]\label{lem:clebsch}
Let $Q$ be the graph on $\F_2^4$ in which $x$ and $y$ are adjacent
when $x+y\in S:=\{e_1,e_2,e_3,e_4,\mathbf1\}$. Then
\[
 16^{1/3}\le\C(Q)\le\overline\vartheta(Q)\le\frac83.
\]
Moreover, $Q$ admits unit vectors orthogonal on edges and a unit
handle whose squared overlap with every vertex vector is $3/8$.
\end{lemma}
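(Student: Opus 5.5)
We need three things: a lower bound $\C(Q)\ge16^{1/3}$, and an upper bound $\overline\vartheta(Q)\le 8/3$ witnessed by an explicit orthonormal representation with constant squared overlap $3/8$. The middle inequality $\C(Q)\le\overline\vartheta(Q)$ is standard: $\overline\vartheta$ is a spectral point (Lovász theta of the complement), and every spectral point bounds capacity from above.

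\textbf{Lower bound.} It suffices to exhibit a clique of size $16$ in $Q^{\OR3}$, since then \eqref{eq:capacity-supremum} gives $\C(Q)\ge 16^{1/3}$. Note that $\omega(Q)=2$, because the Clebsch graph is triangle-free. I would use the linear structure. Take $16$ words $(x,\sigma(x),\tau(x))$ for $x\in\F_2^4$, with $\sigma,\tau$ linear maps (or affine permutations) of $\F_2^4$. Two words with difference $d\neq0$ are adjacent in $Q^{\OR3}$ iff at least one of $d,\sigma d,\tau d$ lies in $S$.

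So we need linear bijections $\sigma,\tau$ such that $S\cup\sigma^{-1}S\cup\tau^{-1}S$ covers all $15$ nonzero vectors. Since $|S|=5$, this requires three pairwise disjoint translates. One natural choice is $\sigma$ of order $3$ acting freely on nonzero vectors, for instance multiplication by a primitive cube root of unity when $\F_2^4\cong\F_{16}$, with $\tau=\sigma^2$. Then we need $S,\sigma S,\sigma^2 S$ to partition $\F_{16}^*$. The five elements of $S$ must hit each coset of the order-$3$ subgroup $\F_4^*$ exactly once. If they do not for the given identification, pick a different linear isomorphism.

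This is the step I expect to be the main obstacle: finding the explicit $\sigma$. It is a finite check, which I would verify by a short computation (or via the repository), trying $\sigma$ of order $3$ first.

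\textbf{Upper bound.} I would build the representation from the characters of $\F_2^4$. For $v$ in a subset, set $w_x=(\alpha,\beta\,((-1)^{x\cdot z})_{z\in T})$ for a suitable set $T$ of frequencies, normalized so that $\alpha^2=3/8$. Orthogonality on edges requires
\[
\alpha^2+\beta^2\sum_{z\in T}(-1)^{d\cdot z}=0\qquad\text{for all }d\in S.
\]
The Clebsch eigenvalues are $5$ (once), $1$ (ten times) and $-3$ (five times). The eigenspace for $-3$ consists of the characters $z$ with weight-related condition $\sum_{s\in S}(-1)^{s\cdot z}=-3$. By symmetry, each $s\in S$ then sees the same sum $\sum_{z\in T}(-1)^{s\cdot z}=-3|T|/5$. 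Taking $|T|=5$ gives each sum equal to $-3$, so we need $\alpha^2=3\beta^2$. Unit length gives $\alpha^2+5\beta^2=1$, hence $\beta^2=1/8$ and $\alpha^2=3/8$, as required.

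For full orthogonality on each edge one needs the symmetry of $S$ under the stabilizer, which holds because $\mathrm{Aut}(Q)$ acts transitively on $S$. Lovász's formula then gives $\overline\vartheta(Q)\le 8/3$, with the handle $u=(1,0,\dots)$.
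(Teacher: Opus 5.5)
\textbf{The gap is in the lower bound.} You correctly reduce $\C(Q)\ge16^{1/3}$ to finding an identification $\F_2^4\cong\F_{16}$ under which $S$ contains exactly one element of each coset of $\F_4^\times$. But you never show that such an identification exists. ``If not, pick a different linear isomorphism'' is a hope, not an argument, and this existence is the whole content of the bound.

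The missing idea is that $S$ can be made a multiplicative subgroup. Let $\beta\in\F_{16}$ be a root of $z^4+z^3+z^2+z+1$. This polynomial is irreducible over $\F_2$, so $1,\beta,\beta^2,\beta^3$ is an $\F_2$-basis; it also divides $z^5-1$, so $\beta$ has order $5$. Send $e_1,\dots,e_4$ to $1,\beta,\beta^2,\beta^3$. Then $\mathbf1\mapsto1+\beta+\beta^2+\beta^3=\beta^4$, so $S$ becomes $H=\{1,\beta,\dots,\beta^4\}$. Since $\F_{16}^\times$ is cyclic of order $15=3\cdot5$, $H$ meets every coset of the order-$3$ subgroup exactly once, so your choice $\sigma=\omega$, $\tau=\omega^2$ works. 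More generally, as in the paper, any $a\in\F_{16}^\times\setminus H$ gives the partition $\F_{16}^\times=H\sqcup aH\sqcup a^2H$, and the words $(x,ax,a^2x)$ form a $16$-clique in $Q^{\OR3}$.

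\textbf{Your upper bound is correct} and is exactly the paper's representation written in coordinates. The paper takes the Gram matrix $T=I+\tfrac12(J-I-D)$, with $D$ the adjacency matrix, and checks positivity from the eigenvalues $5,1,-3$ of $D$. Writing $\chi_z=((-1)^{x\cdot z})_x$, this matrix equals $\tfrac38J+\tfrac18\sum_z\chi_z\chi_z^{\mathsf T}$, the sum over the five characters of eigenvalue $-3$ (those with $z$ of weight $3$ or $4$). That is precisely the Gram matrix of your vectors, and the paper's handle $\frac1{\sqrt{96}}\sum_xv_x$ is your first coordinate vector.

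The per-$s$ sums also need no symmetry argument. For $s=e_i$, four of these five $z$ have $z_i=1$; for $s=\mathbf1$, four have odd weight. So every sum is $-4+1=-3$. If you do argue by symmetry, the group must act linearly so that it permutes characters and preserves the eigenvalue-$(-3)$ set. The linear stabiliser of $S$, which acts as the full symmetric group on $S$, does this. The vertex-stabiliser phrasing ``$\mathrm{Aut}(Q)$ acts transitively on $S$'' needs that refinement to be meaningful.
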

\begin{proof}
For the lower bound~\cite{GreenwoodGleason,Simonyi}, identify $\F_2^4$ with $\F_{16}$ using the
basis $1,\beta,\beta^2,\beta^3$, where $\beta$ has multiplicative
order five. Its minimal polynomial is
$z^4+z^3+z^2+z+1$, so the connection set $S$ becomes the
multiplicative subgroup $H=\{1,\beta,\beta^2,\beta^3,\beta^4\}$.
Choose $a\in\F_{16}^{\times}\setminus H$.
The three cosets $H,aH,a^2H$ partition $\F_{16}^{\times}$.
Consequently, for every nonzero $d$, exactly one of $d,ad,a^2d$
belongs to $H$. The sixteen words
\[
 (x,ax,a^2x),\qquad x\in\F_{16},
\]
therefore form a clique in $Q^{\OR3}$, proving
$\C(Q)\ge16^{1/3}$.

For the upper bound, let $D$ be the adjacency matrix of $Q$.
The sixteen characters $x\mapsto(-1)^{u\cdot x}$, indexed by
$u\in\F_2^4$, form an orthogonal basis of $\mathbb R^{\F_2^4}$.
Each is an eigenvector of $D$
with eigenvalue $4-2|u|+(-1)^{|u|}$, where $|u|$ is the Hamming
weight of $u$. Thus the eigenvalues are $5$, $1$, and $-3$, with
the eigenvalue $5$ occurring only for the constant character.
Writing $J$ for the all-ones matrix, the matrix
\[
 T=I+\frac12(J-I-D)
\]
has eigenvalues $6$, $0$, and $2$, and is therefore positive
semidefinite. It has diagonal entries one, vanishes on edges of $Q$,
and has row sum six. Let $(v_x)_{x\in\F_2^4}$ be unit vectors with
Gram matrix $T$, meaning $T_{xy}=\langle v_x,v_y\rangle$. Such vectors
exist because $T$ is positive semidefinite with diagonal entries one.
The vector
\[
 h=\frac1{\sqrt{96}}\sum_xv_x
\]
is a unit vector satisfying $|\langle h,v_x\rangle|^2=3/8$ for
every $x$. Thus the orthonormal-representation formula gives
$\overline\vartheta(Q)\le8/3$. Tensoring this representation and applying Bessel's
inequality, as in \cref{lem:binary}, gives
$\omega(Q^{\OR t})\le(8/3)^t$.
\end{proof}

The next lemma explains how the complementary diagonal code and the
additional joined graph increase the capacity of the disjoint union.
It uses additivity of spectral points under join, without assuming
that capacity itself is additive.

\begin{lemma}[A joined complementary component]\label{lem:joined-complement}
Let $A$ be a graph on $n$ vertices, and let $Q$ be a graph with
$\C(Q)\ge s>0$. Then
\[
 \C\bigl(A\sqcup(\overline A\join Q)\bigr)
 \ge\frac{s+\sqrt{s^2+4n}}2.
\]
\end{lemma}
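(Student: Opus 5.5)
Since the paper explicitly notes that this lemma uses join additivity of spectral points rather than capacity, I will prove the lower bound through the characterization \eqref{eq:duality}. It suffices to show that every $\varphi\in\spec$ satisfies $\varphi(A\sqcup(\overline A\join Q))\ge (s+\sqrt{s^2+4n})/2$; taking the minimum over $\spec$ then gives the capacity bound. Fix $\varphi$ and write $a=\varphi(A)$ and $b=\varphi(\overline A)$. By the maximum rule \eqref{eq:max} and join additivity,
\[
 \varphi\bigl(A\sqcup(\overline A\join Q)\bigr)=\max\{a,\;b+\varphi(Q)\}\ge\max\{a,\;b+s\},
\]
since $\varphi(Q)\ge\C(Q)\ge s$ holds for every spectral point.

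The key input is a multiplicative lower bound $ab\ge n$. I will obtain it from the diagonal code of \cref{sec:simple}. The $n$ words $(x_A,x_{\overline A})$ form a clique in $A\OR\overline A$, because any two distinct labels are adjacent in exactly one of $A,\overline A$. Hence $K_n\to A\OR\overline A$, and by normalization, join additivity, monotonicity and OR multiplicativity we get $n=\varphi(K_n)\le\varphi(A\OR\overline A)=ab$. This is the spectral form of the $F_9$ argument, and it needs no disjoint union: the product of the two components themselves is used.

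It remains to carry out an elementary optimization. Put $M=\max\{a,b+s\}$. Then $a\le M$ and $b\le M-s$, so $n\le ab\le M(M-s)$, using $M-s\ge b\ge0$. The quadratic inequality $M^2-sM-n\ge0$ with $M>0$ forces $M\ge (s+\sqrt{s^2+4n})/2$, which is the claimed bound. I do not expect a real obstacle here. The only step needing care is to check that $a,b$ are nonnegative and that the clique homomorphism is valid, which holds because $A$ and $\overline A$ are on the same vertex labels and partition all pairs. If one preferred to avoid spectral points, a direct code construction mixing words from $(\overline A\join Q)$ powers would be harder; the spectral route sidesteps this.
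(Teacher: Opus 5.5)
Your proof is correct and follows essentially the same route as the paper. Both arguments take the diagonal clique in $A\OR\overline A$ to get $\varphi(A)\varphi(\overline A)\ge n$, and use join additivity with the maximum rule to reduce to $\max\{\varphi(A),\varphi(\overline A)+s\}$; both then solve the resulting quadratic inequality and minimize over $\spec$ via \eqref{eq:duality}.
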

\begin{proof}
For any $\varphi\in\spec$, put $x=\varphi(A)$ and
$y=\varphi(\overline A)$. The $n$ diagonal pairs $(v,v)$ form a clique
in $A\OR\overline A$, so multiplicativity and monotonicity give
$xy\ge n$. By additivity under join and~\eqref{eq:max},
\[
 \varphi\bigl(A\sqcup(\overline A\join Q)\bigr)
 =\max\{x,y+\varphi(Q)\}\ge\max\{x,y+s\}.
\]
Writing the last maximum as $r$ gives $x\le r$, $y\le r-s$,
and therefore $n\le xy\le r(r-s)$. Solving this quadratic inequality
and taking the minimum over $\varphi$ proves the claim.
\end{proof}

\Needspace{12\baselineskip}
\begin{proof}[Proof of \cref{thm:finite}]
Set
\[
 A=U_6,\qquad B=\overline A\join Q,\qquad X_7=A\sqcup B.
\]
By \cref{lem:binary}, $\C(A)=6$ and $\C(\overline A)=4$.
Its proof supplies a representation of $\overline A$ with squared
handle overlap $1/4$. Combine this representation with that of $Q$
from \cref{lem:clebsch} in orthogonal summands.
For handles $u$ and $h$ in the respective summands, use
\[
 c=\sqrt{\frac{4}{20/3}}\,u\;\oplus\;
   \sqrt{\frac{8/3}{20/3}}\,h.
\]
This is a unit handle with squared overlap $3/20$ with every vertex
vector. The summands are orthogonal, so the representation also
vanishes on all edges added by the join. Bessel's inequality gives
\begin{equation}\label{eq:seven-components}
 \max\{\C(A),\C(B)\}\le\frac{20}{3}<7.
\end{equation}

The graph $A$ has 32 vertices. Applying \cref{lem:joined-complement}
with $s=16^{1/3}$ gives
\[
 \C(X_7)\ge\frac{s+\sqrt{s^2+128}}2=\rho>7,
\]
where $\rho$ is defined in~\eqref{eq:rho}.
For the strict comparison, $s>17/7$ because
$16\cdot7^3=5488>4913=17^3$; hence
$7^2-7s-32=17-7s<0$.

Following the weighted argument of \cref{prop:two}, choose spectral
points $\varphi_A,\varphi_B$ attaining the capacities of the base
components $A,B$, respectively. Interpolate them with
weights $1/4,3/4$ in~\eqref{eq:interpolate} to obtain a single
$\psi_7\in\spec$, which we fix for the whole family.
For an integer $m\ge7$, let $k=m-7$ and set
\[
 A_m=A\join K_k,\qquad B_m=B\join K_k,
 \qquad X_m=A_m\sqcup B_m,
\]
omitting $K_0$ when $k=0$.
The spectral rules give
$\varphi(X_m)=\varphi(X_7)+k$ for every $\varphi\in\spec$;
therefore $\C(X_m)=\C(X_7)+k>m$.
Eq.~\eqref{eq:joincopies} and
\eqref{eq:seven-components} give
\[
 \C(A_m)=m-1,\qquad \C(B_m)\le m-\frac13.
\]
Join additivity also gives $\varphi_A(A_m)=m-1$ and
$\varphi_B(B_m)\le m-\tfrac13$. The coordinate projections and
the weighted arithmetic--geometric mean inequality therefore give
\[
\begin{aligned}
 \psi_7(A_m\times K_m)
 &\le(m-1)^{1/4}m^{3/4}
 \le\tfrac14(m-1)+\tfrac34m=m-\tfrac14,\\
 \psi_7(B_m\times K_m)
 &\le m^{1/4}(m-\tfrac13)^{3/4}
 \le\tfrac14m+\tfrac34(m-\tfrac13)=m-\tfrac14.
\end{aligned}
\]
Since the categorical product distributes over disjoint union, the
maximum rule~\eqref{eq:max} yields
\[
 \C(X_m\times K_m)\le\psi_7(X_m\times K_m)
 \le m-\tfrac14<m
 =\min\{\C(X_m),\C(K_m)\}.
\]
Finally, $|V(A)|=32$ and $|V(Q)|=16$, so
$|V(X_m)|=32+(32+16)+2k=2m+66$.
Taking $G=X_m$ proves \cref{thm:finite}.
\end{proof}

\begin{proof}[Proof of \cref{cor:spectral}]
The preceding proof constructs a single spectral point $\psi=\psi_7$,
independent of $m$, such that $G_m=X_m$ satisfies
\[
 \psi(G_m\times K_m)\le m-\tfrac14<m.
\]
Every spectral point bounds capacity from above, so
$\psi(G_m)\ge\C(G_m)>m$. Normalization and join additivity give
$\psi(K_m)=m$, proving the claim.
\end{proof}

\input{subsix_supplement.tex}
\input{threshold_supplement.tex}

\section{Proof of Theorem~\ref*{thm:asymptotic}: asymptotic separation}\label{sec:asymproof}

This appendix proves the asymptotic separation in \cref{thm:asymptotic}.
We construct graphs using several finite fields and choose their
parameters so that both factor capacities grow while the product
capacity is subpolynomial in the smaller factor capacity.

The next lemma provides both a large code and spectral bounds over
several fields. Its polynomial-rank argument adapts Alon's finite-field
method~\cite[Secs.~3--4]{Alon}. The edge sets need only cover the
complete graph, so an edge may belong to more than one graph.

\begin{lemma}[A finite-field covering construction]\label{lem:partition}
Let $P\ge2$ be an integer, let $q_1,\ldots,q_k$ be distinct primes
in $(P,2P)$, and let $1\le h<k$. Set
\[
 L=P^{h+1},\qquad d=PL,\qquad n=\binom dL,\qquad
 N=\binom kh,\qquad D=d^{2P}.
\]
There are graphs $A_S$, indexed by the $h$-element subsets
$S\subseteq\{1,\ldots,k\}$, on a common set of $n$ vertices, whose
edges cover $K_n$ and which satisfy
\begin{equation}\label{eq:fields}
 \phif{\F_{q_j}}(A_S)\le D\qquad(j\notin S).
\end{equation}
For their disjoint union $F=\bigsqcup_{|S|=h}A_S$, one has
\begin{equation}\label{eq:partitionbounds}
 \C(F)\ge n^{1/N},
 \qquad \C(F\times K_m)\le D^{1-h/k}m^{h/k}
 \quad(m\ge1).
\end{equation}
\end{lemma}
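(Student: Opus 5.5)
The plan is to take Alon's set-intersection graphs over several primes simultaneously. The common vertex set is the family of all $L$-element subsets of $[d]=\{1,\ldots,d\}$, which has $n=\binom dL$ elements, and each vertex $x$ is identified with its characteristic vector in $\{0,1\}^d$. For an $h$-subset $S\subseteq\{1,\ldots,k\}$, let $A_S$ join distinct $x,y$ exactly when
\[
 |x\cap y|\not\equiv L\pmod{q_j}\qquad\text{for every }j\notin S.
\]
Once the covering property and \eqref{eq:fields} are established, the two bounds in \eqref{eq:partitionbounds} follow directly.
\begin{itemize}
\item The lower bound $\C(F)\ge n^{1/N}$ is the word code already described in \cref{sec:asymptotic-bound}. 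Order the $N=\binom kh$ components, and send each label $x$ to the word $(x_S)_S$ in $F^{\OR N}$. The covering property makes these $n$ words a clique, and \eqref{eq:capacity-supremum} gives the bound.
\item The upper bound is \cref{lem:many}. Take the $k$ spectral points $\varphi_j=\phif{\F_{q_j}}$, which belong to $\spec$ by \cref{lem:haemers}. Use the components $A_S$ with exceptional sets $S_i=S$, each of size exactly $h<k$, and use $D=d^{2P}\ge1$.
\end{itemize}

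\textbf{Covering step.} Let $x\ne y$ and put $t=|x\cap y|$, so $0\le t<L$ and $1\le L-t\le L-1<P^{h+1}$. Every $q_j$ exceeds $P$, so a product of $h+1$ distinct $q_j$ exceeds $P^{h+1}>L-t$. Hence at most $h$ of the primes divide $L-t$. Since $h<k$, we can choose an $h$-set $S$ containing all indices $j$ with $q_j\mid L-t$. For every $j\notin S$ we then have $t\not\equiv L\pmod{q_j}$, so $xy\in E(A_S)$.

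\textbf{Rank step.} Fix $j\notin S$ and write $q=q_j$. Let $R$ be the set of residues $a\in\F_q$ with $a\ne L \bmod q$, so $|R|=q-1$. Define
\[
 M_{xy}=\prod_{a\in R}\bigl(\langle x,y\rangle-a\bigr)\in\F_q,
\]
where $\langle x,y\rangle=\sum_i x_iy_i=|x\cap y|$.
\begin{itemize}
\item On the diagonal, $M_{xx}=\prod_{a\in R}(L-a)$, which is nonzero in $\F_q$. Rescaling $M$ by this constant gives diagonal entries $1$.
\item On edges of $A_S$, the value $|x\cap y| \bmod q$ is one of the residues in $R$, so $M_{xy}=0$.
\item For the rank, expand $M_{xy}$ as a polynomial of degree $q-1<2P$ in the variables $x_iy_i$. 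Every term $\prod_{i\in I}x_iy_i$ with $|I|\le q-1$ factors as $f_I(x)f_I(y)$. Therefore $M=\sum_I c_I f_If_I^{\mathsf T}$ over multilinear monomials, and
\[
 \rank_{\F_q}M\le\sum_{i\le 2P}\binom di\le d^{2P}=D,
\]
using $d\ge2$.
\end{itemize}
By \cref{lem:haemers}, this gives \eqref{eq:fields}.

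The main obstacle I expect is getting the covering condition and the rank condition to hold simultaneously, with the same $h$ exceptions working for every pair. This is handled by taking $L=P^{h+1}$ together with primes in $(P,2P)$. The size condition on the primes is what forces the bound of at most $h$ prime divisors of $L-t$. The upper bound on the primes keeps the polynomial degree below $2P$, so the rank stays at most $d^{2P}$ uniformly in $j$. The remaining steps are routine applications of \cref{lem:haemers,lem:many}.
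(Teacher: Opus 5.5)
Your proposal is correct and follows the paper's proof essentially step for step. You use the same graphs $A_S$ (an edge exactly when every prime $q_j$ dividing $L-|X\cap Y|$ has $j\in S$), the same prime-product covering argument and the same multilinearization rank bound. Indeed, over $\F_q$ your polynomial satisfies $\prod_{a\ne L}(z-a)=(z-L)^{q-1}-1$, which is exactly $-1$ times the paper's $M_j$.

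There is one harmless slip in the covering step. Disjoint $x,y$ give $L-t=L$, so the bound $L-t\le L-1$ is false in that case. The argument only needs $L-t\le L=P^{h+1}$, and this is still strictly less than any product of $h+1$ of the primes, so nothing else changes.
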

\begin{proof}
Take the common vertex set to be $\binom{[d]}L$. For distinct
vertices $X,Y$, put
\[
 B(X,Y)=\{j:q_j\mid L-|X\cap Y|\},
 \qquad XY\in E(A_S)\ \Longleftrightarrow\ B(X,Y)\subseteq S.
\]
The positive integer $L-|X\cap Y|\le L$ has at most $h$ of the
selected primes as divisors, since any $h+1$ of them have product
strictly greater than $P^{h+1}=L$. Thus $B(X,Y)$ is contained in
some $h$-element set $S$, proving the covering condition.

Over $\F_{q_j}$ define
\[
 M_j(X,Y)=1-(|X\cap Y|-L)^{q_j-1}.
\]
Its diagonal entries are $1$. If $XY\in E(A_S)$ and $j\notin S$,
then $q_j$ does not divide $L-|X\cap Y|$, so Fermat's theorem gives
$M_j(X,Y)=0$. Each row is a polynomial of degree at most $q_j-1$
in the incidence vector of $Y$, the zero-one vector recording
membership in $Y$. On zero-one vectors, replacing every
positive power of a variable by that variable leaves its value
unchanged. The row space therefore lies in the common span of the
evaluation vectors of squarefree monomials of degree at most $q_j-1$.
Explicitly, these are the vectors with entries
$Y\mapsto\mathbf1_{\{T\subseteq Y\}}$, indexed by subsets
$T\subseteq[d]$ with $|T|\le q_j-1$.
Since $q_j<2P\le d$, this gives
\[
 \rank_{\F_{q_j}}M_j
 \le\sum_{\ell=0}^{q_j-1}\binom d\ell
 \le q_jd^{q_j-1}\le d^{q_j}\le D.
\]
\Cref{lem:haemers} proves~\eqref{eq:fields}.

For each vertex $X$, form the $N$-tuple of its copies in all graphs
$A_S$. The covering condition makes these $n$ tuples a clique in $F^{\OR N}$,
giving the lower bound in~\eqref{eq:partitionbounds}. The upper bound
is \cref{lem:many}, with exceptional set $S$ for graph $A_S$.
\end{proof}

\Needspace{12\baselineskip}
\begin{proof}[Proof of \cref{thm:asymptotic}]
Let $x=\log m$. For every sufficiently large integer $m$, choose
\[
 P=\lceil\sqrt{x}\rceil,\qquad
 k=\left\lfloor\frac{P}{3\log P}\right\rfloor,\qquad
 h=\left\lceil\frac{\log P}{\log\log P}\right\rceil.
\]
Writing $\pi(u)$ for the number of primes at most $u$, the prime
number theorem~\cite{Zagier} gives
$\pi(2P)-\pi(P)\sim P/\log P$.
The number of primes in the open interval $(P,2P)$ differs from this
count by at most one, so it eventually exceeds $k$.
Also $2\le h<k$ eventually. Apply \cref{lem:partition}.

To bound the factor capacity from below, use
$\binom dL\ge(d/L)^L=P^L$ and $N\le k^h$.
Since $k\le P/\log P$ and $(\log P)^h\ge P$, we obtain
\[
 \frac{\log n}{N}
 \ge\frac{P^{h+1}\log P}{k^h}
 \ge P\log P\,(\log P)^h
 \ge P^2\log P\ge x
\]
for sufficiently large $P$. Hence $\C(F)\ge m$.

To bound the product capacity from above, note that $h\ge2$ gives
$\log D=2P(h+2)\log P\le4Ph\log P$.
Also $k\ge P/(4\log P)$ eventually and $x\le P^2$, so
$xh/k\le4Ph\log P$. Therefore
\begin{align}
 \log\C(F\times K_m)
 &\le(1-h/k)\log D+(h/k)x\notag\\
 &\le8Ph\log P
 =O\!\left(\sqrt{x}\,
             \frac{(\log x)^2}{\log\log x}\right).
 \label{eq:optimized}
\end{align}
All constants are absolute. Set $G_m=F\join K_1$.
By \cref{lem:connected}, it is connected, has capacity at least $m+1$,
and its product capacity is at most $\C(F\times K_m)+1$.
Absorbing this additive $1$ into the constant in~\eqref{eq:optimized}
proves~\eqref{eq:asym-factor}--\eqref{eq:asym-product}.

For real $t$, take $m=\lceil t\rceil$. The pair $(G_m,K_m)$ is
admissible in~\eqref{eq:extremal}, and
$\log\lceil t\rceil/\log t\to1$. Thus the same explicit upper bound
holds for $f_C(t)$ after changing the constant. Since
$(\log\log t)^2/\log\log\log t=o((\log t)^\delta)$ for every
$\delta>0$, this proves~\eqref{eq:clean-asym}.
Finally $f_C(t)\ge1$ and $\log f_C(t)=o(\log t)$ give both limits
in~\eqref{eq:limits}.
\end{proof}

\Needspace{14\baselineskip}
\section{Proof of Proposition~\ref*{prop:switch} and its rate consequences}\label{sec:channelproof}

This appendix proves \cref{prop:switch,cor:finite-rates,cor:asym-rates}
from the channel interpretation in \cref{sec:operational}.
We realize graphs as channels, identify the
graph of the switched channel, and derive the stated zero-error rate
bounds and the comparison with vanishing-error capacity.

\paragraph{Zero-error codes and graph realizations.}
We give the standard support argument for~\eqref{eq:channel-rate}
and the realization fact used in \cref{prop:switch}.
For a finite discrete memoryless channel $W$, put
$\mathcal O_W(a)=\{y:W(y\mid a)>0\}$. Distinct inputs $a,a'$ are adjacent
in $D(W)$ if $\mathcal O_W(a)\cap\mathcal O_W(a')=\varnothing$.
By memorylessness, the output support of an input word is the Cartesian
product of its coordinate supports. Two such products are disjoint
if and only if the supports are disjoint in at least one coordinate.
Thus the largest zero-error
code of block length $n$ has size $\omega(D(W)^{\OR n})$, proving
$R_0(W)=\log_2\C(D(W))$ by the supremum characterization of
Shannon OR-capacity. This proves~\eqref{eq:channel-rate}.

Every finite graph $G$ is a distinguishability graph. Assign each vertex
a private output symbol, and each unordered pair of distinct nonadjacent
vertices an additional output symbol shared by that pair. On input $a$,
choose uniformly among its private symbol and the shared symbols
assigned to pairs containing $a$. Two distinct inputs share a possible
output exactly when they are nonadjacent in $G$.

\begin{proof}[Proof of \cref{prop:switch}]
Since $p$ and $1-p$ are positive, the output support of $(a,b)$ is
the disjoint union
\[
 \bigl(\{0\}\times\mathcal O_{W_G}(a)\bigr)
 \cup\bigl(\{1\}\times\mathcal O_{W_H}(b)\bigr).
\]
The labels $0$ and $1$ make outputs from different branches distinct.
The supports for $(a,b)$ and $(a',b')$ are therefore disjoint if and
only if
\[
 \mathcal O_{W_G}(a)\cap\mathcal O_{W_G}(a')=\varnothing
 \quad\text{and}\quad
 \mathcal O_{W_H}(b)\cap\mathcal O_{W_H}(b')=\varnothing.
\]
These conditions are exactly $aa'\in E(G)$ and $bb'\in E(H)$,
which prove~\eqref{eq:switch-graph}. Applying~\eqref{eq:channel-rate}
to the memoryless channel $W_{\mathrm{sw}}$ gives the claimed
block-code size and zero-error rate. Its output supports do not
depend on the value of $p\in(0,1)$, so neither quantity depends on $p$.

If \cref{conj:strong} holds, taking base-two logarithms and applying
\eqref{eq:channel-rate} proves~\eqref{eq:switch-benchmark} for every
channel pair. Conversely, realize any finite graphs $G,H$ by channels
using the construction above. If~\eqref{eq:switch-benchmark} holds
for every pair, the rate identity just proved gives
\[
 \log_2\C(G\times H)
 =\min\{\log_2\C(G),\log_2\C(H)\}.
\]
Exponentiating proves \cref{conj:strong}. All capacities are at least
one, so these logarithms are well-defined.
\end{proof}

\paragraph{Comparison with vanishing-error capacity.}
For completeness, we prove the comparison stated in
\cref{sec:operational} for the same channel $W_{\mathrm{sw}}$.
If $C_{\mathrm{Sh}}$ denotes the ordinary vanishing-error capacity in bits
per use, then
\begin{equation}\label{eq:ordinary-switch}
 C_{\mathrm{Sh}}(W_{\mathrm{sw}})
 =pC_{\mathrm{Sh}}(W_G)+(1-p)C_{\mathrm{Sh}}(W_H).
\end{equation}
\begin{proof}
For a finite discrete memoryless channel, the channel-coding theorem
gives $C_{\mathrm{Sh}}(W)=\max_{P_X}I(X;Y)$~\cite[Sec.~3.1]{ElGamalKim}.
Here $I(X;Y)$ is the mutual information in bits, defined by
\[
 I(X;Y)=\sum_{x,y}P_{XY}(x,y)
 \log_2\frac{P_{XY}(x,y)}{P_X(x)P_Y(y)},
\]
with zero-probability terms omitted; conditional mutual information
is the average of this quantity over the conditioning variable.
Write $A,B$ for the input random variables and
$S$ for the independent switch, with $\Pr(S=0)=p$. The mutual
information between input and tagged output is
\begin{align*}
 I(A,B;S,Y_S)
 &=I(A,B;S)+I(A,B;Y_S\mid S)\\
 &=pI(A;Y_G)+(1-p)I(B;Y_H).
\end{align*}
Here $S$ is independent of the inputs, and each component output
depends only on its own input. This identity holds even when $A$
and $B$ are correlated.
Each term is bounded by the corresponding component capacity. Choosing
independent capacity-achieving input distributions for $A$ and $B$
attains both bounds simultaneously, proving~\eqref{eq:ordinary-switch}.
\end{proof}

\Needspace{20\baselineskip}
The operational consequences now follow by applying \cref{prop:switch}
and taking base-two logarithms of the bounds on graph capacity,
together with the explicit codes for the finite constructions.
For every finite channel $W$ and every $n\ge1$, the definition of
$R_0(W)$ gives
\begin{equation}\label{eq:block-capacity-bound}
 M_0(W,n)\le2^{nR_0(W)}=\C(D(W))^n.
\end{equation}

\begin{proof}[Proof of~\eqref{eq:finite-rates} and
\eqref{eq:finite-two-use}]
Let $W_F$ realize $F_9$, and let $W_9$ be a noiseless nine-symbol
channel. \Cref{prop:nine} and~\eqref{eq:nine-base} give
\[
 \C(F_9)=\sqrt{128}>9,\qquad
 \C(F_9\times K_9)\le\sqrt{72}<9.
\]
Taking base-two logarithms and applying \cref{prop:switch} proves
\eqref{eq:finite-rates}.

The explicit 128-word code in \cref{sec:simple} gives
$M_0(W_F,2)\ge128$, and~\eqref{eq:block-capacity-bound} gives the
matching upper bound. The noiseless channel satisfies
$M_0(W_9,2)=9^2=81$. Finally, \cref{prop:switch}
and~\eqref{eq:block-capacity-bound} give
\[
 M_0(W_{\mathrm{sw}},2)\le\C(F_9\times K_9)^2\le72,
\]
proving~\eqref{eq:finite-two-use}.
\end{proof}

\begin{proof}[Proof of \cref{cor:finite-rates}]
For part~(1), realize the graph $G$ from \cref{thm:finite} by a
channel $W_G$, and take $W_m$ to be noiseless on $m$ symbols.
Taking base-two logarithms of the theorem's bounds and applying
\cref{prop:switch} gives the claimed inequalities.

For part~(2), realize the graphs constructed for \cref{thm:sharper}(1)
by channels $W_G,W_H$. The rate bounds again follow by taking logarithms
and applying \cref{prop:switch}. The explicit codes in
\cref{subsix:sec:graphs} give 27 messages in two uses of each
individual channel. On the other hand,
\eqref{eq:block-capacity-bound} gives
\[
 M_0(W_{\mathrm{sw}},2)\le\C(G\times H)^2<27.
\]
Since the number of messages is an integer, it is at most 26.

For part~(3), put $t=2^r\ge6.0101$ and realize the graphs supplied
by \cref{thm:sharper}(2). Taking logarithms of
$\C(G\times H)<t<\min\{\C(G),\C(H)\}$ gives the claimed rate
separation. In every part, the graph realizations are independent
of $p$, and \cref{prop:switch} applies to the same channels for
every $p\in(0,1)$.
\end{proof}

\begin{proof}[Proof of \cref{cor:asym-rates}]
For each sufficiently large $m$, realize the graph $G_m$ from
\cref{thm:asymptotic} by a channel $W_{G_m}$, and let $W_m$ be a
noiseless $m$-symbol channel. By \cref{prop:switch}, taking base-two
logarithms in~\eqref{eq:asym-product} and writing $R_m=\log_2 m$ gives,
for every $p\in(0,1)$,
\[
 R_0(W_{\mathrm{sw},m})
 =O\!\left(\sqrt{R_m}\,
       \frac{(\log R_m)^2}{\log\log R_m}\right).
\]
The factor multiplying $\sqrt{R_m}$ is $R_m^{o(1)}$; hence
\eqref{eq:asym-rates} follows. The smaller component rate is $R_m$
because $\C(G_m)\ge m+1$ and $\C(K_m)=m$.
\end{proof}

\end{document}

%% file: subsix_supplement.tex
\section{Proof of Theorem~\ref*{thm:sharper}(1): a counterexample below six}\label{sec:subsixproof}

This appendix gives the counterexample with both factor capacities
below six, proving \cref{thm:sharper}(1). We specify the graphs, codes
and exact matrix certificates that establish the required capacity
bounds, together with procedures for checking them.

The proof in this appendix combines analytical arguments with
computer-assisted verification of explicit graph data, codes and matrix
certificates. OpenAI Codex assisted with exploring candidate constructions
and certificates and with developing and reviewing the verification code.
The finite certificate checks use exact arithmetic, and the
product-capacity bound follows from the analytical spectral argument.

\subsection{Statement}

We use the conventions of Supplementary Apps.~\ref{sec:notation}--\ref{sec:spectrum}.
A length-$n$ code in $X$ is a clique in $X^{\OR n}$, and
$\C(X)=\sup_{n\ge1}\omega(X^{\OR n})^{1/n}$.

Set
\[
 b=\frac{32437757}{6250000}=5.19004112,\quad
 q=\frac{12991403}{2500000}=5.1965612,\quad
 L=\frac{5193957}{1000000}=5.193957.
\]
\begin{proposition}\label{subsix:thm:main}
For the graphs \(G,H\) defined in Subsec.~\ref{subsix:sec:graphs},
\[
 \C(G\times H)<L<\sqrt{27}\leq\min\{\C(G),\C(H)\},
\]
and, separately,
\[
 \C(G)\leq\frac{16}{3}<6,\qquad \C(H)\leq q<6.
\]
Consequently the Shannon OR-capacity Hedetniemi equality fails.
The same pair witnesses
\(\C(G\times H)<r<\min\{\C(G),\C(H)\}\) for every real
\(r\in[L,\sqrt{27})\).
\end{proposition}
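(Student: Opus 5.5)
We do not yet know the graphs of Subsec.~\ref{subsix:sec:graphs}, so the plan follows the template of \cref{prop:nine} and \cref{thm:finite}, with the concrete matrices supplied later as exact certificates. We expect $G=G_0\sqcup G_1$ to be a disjoint union of two components on $27$ vertices each, on a common label set, with $H$ a $27$-vertex graph. The edge sets of $G_0$ and $G_1$ should cover all pairs of labels. Then the diagonal words $(x_{G_0},x_{G_1})$ form a $27$-clique in $G^{\OR2}$, exactly as for $F_9$. For $H$ we would exhibit an explicit clique of size $27$ in $H^{\OR2}$, listed in the data and checked exhaustively. By \eqref{eq:capacity-supremum}, this gives $\sqrt{27}\le\min\{\C(G),\C(H)\}$.

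For the upper bounds on the factors we would use orthonormal-representation certificates, as in \cref{lem:binary,lem:clebsch}. Each certificate is a rational positive semidefinite Gram matrix with unit diagonal that vanishes on edges, together with a handle of uniform squared overlap. Such a certificate yields $\overline\vartheta\le$ the stated rational value, and tensoring with Bessel's inequality bounds every OR-power. Applied to each component, this should give $\C(G_i)\le 16/3$ and $\C(H)\le q$. By the maximum rule \eqref{eq:max} with a spectral point $\psi=\overline\vartheta$-type, and with \eqref{eq:duality}, we get $\C(G)\le\max_i\overline\vartheta(G_i)\le16/3$, since $\overline\vartheta$ is itself a spectral point. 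Positive semidefiniteness would be certified exactly, via a rational $LDL^{\mathsf T}$ factorization, with symmetry used to compress the matrices.

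The product bound is the substance of the argument. Since $G\times H=(G_0\times H)\sqcup(G_1\times H)$, we would fix two spectral points $\varphi_1,\varphi_2$ and interpolate with a weight $\lambda$, as in \eqref{eq:weighted-main}. This gives
\[
\C(G\times H)\le\max_{i=0,1}\varphi_1(G_i\times H)^{\lambda}\varphi_2(G_i\times H)^{1-\lambda}.
\]
Each factor on the right is bounded either by projection, using $\varphi(G_i)$ or $\varphi(H)$, or by a direct certificate on the product. Tensor products of the component representations restricted to product edges are one source of such certificates, and the constant $b$ is presumably one of these values. We would take $\varphi_1$ sharp on $G_0$ and $H$, and $\varphi_2$ sharp on $G_1$, using fractional Haemers or theta-type points from \cref{lem:haemers}. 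The weight $\lambda$ would then be chosen to balance the two component maxima. The conclusion is a chain of rational inequalities, checked exactly by raising both sides to a common integer power, showing that the maximum is at most $b$-ish and hence strictly below $L$. Finally, $L^2=26.977\ldots<27$ gives $L<\sqrt{27}$. The statement about every $r\in[L,\sqrt{27})$ then follows immediately.

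The main obstacle is finding a pair of spectral points that are simultaneously good on both products. Projection alone gives only $\sqrt{\C(G_i)\,\C(H)}$-type bounds. These fall below $\sqrt{27}$ only if one of the spectral values on each component is well below $\sqrt{27}$ while the other is not much above it. We therefore expect the delicate step to be producing certificates in which $\varphi_1(G_1)$ and $\varphi_2(G_0)$ are not too large, and verifying them exactly. Our first attempt would be a theta certificate for one component and a fractional-colouring bound via \eqref{eq:haemers-colouring} for the other, before resorting to general rational SDP certificates.
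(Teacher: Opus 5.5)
Your template is the paper's: $G=B\sqcup\overline B$ with the $27$ diagonal words, an explicit $27$-word code in $H^{\OR2}$, exact theta Gram certificates with the maximum rule for $\C(G)\le16/3$ and $\C(H)\le q$, and the weighted bound~\eqref{eq:weighted-main} closed by integer-power comparisons and $L^2<27$. The gap is in the step you single out as delicate: the certificates you propose there cannot work.

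As you observe, with projection bounds each component $G_i\times H$ needs a spectral point whose certified value on $G_i$ itself is below $\sqrt{27}$, because every $\varphi(H)\ge\C(H)\ge\sqrt{27}$. The diagonal code gives $\varphi(G_0)\varphi(G_1)\ge27$ for every $\varphi\in\spec$. So two different points are needed, one small on each component. Theta can be small on at most one component. A fractional colouring can never certify the small value on either: if $\varphi'$ is the point small on $G_{1-i}$, then $\chi_f(G_i)\ge\varphi'(G_i)\ge27/\varphi'(G_{1-i})>\sqrt{27}$. This uses $\chi_f\ge\overline\vartheta$ (Lov\'asz's sandwich) or $\chi_f\ge\phif{\mathbb K}$ from~\eqref{eq:haemers-colouring}. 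Concretely, $\chi_f(B)\ge\overline\vartheta(B)\ge27/b>\sqrt{27}$. So neither your first attempt (theta plus fractional colouring) nor a fallback to general SDP (theta) certificates can handle the second component.

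The missing ingredient is a genuine finite-field rank bound lying strictly below $\chi_f$. Here $B$ is the $\F_3$-orthogonality graph of the listed vectors $w_i\in\F_3^5$, and the factors $\ell_i=(w_i\cdot w_i)^{-1}w_i^{\mathsf T}$, $v_i=w_i$ give $h_3(B)=\phif{\F_3}(B)\le5$. This is exactly how the $\F_2$ rank bound works for $U_8$.

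Relatedly, the off-diagonal values you want to keep small, $\varphi_1(G_1)$ and $\varphi_2(G_0)$, are forced above $\sqrt{27}$ by the same inequality, and they play no role in the paper's product bound. The second factor on each component is taken from the partner instead, and the fractional colouring is used only there. Eleven independent sets covering every vertex of $H$ twice give $h_3(H)\le11/2$, alongside $\overline\vartheta(H)\le q$.

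The projection table is then $h_3\le5$, $\overline\vartheta\le q$ on $B\times H$, and $h_3\le11/2$, $\overline\vartheta\le b$ on $D\times H$. Here $b$ is the certified bound on $\overline\vartheta(D)$ for $D=\overline B$, not a certificate on a product. No product certificates are needed. The weight $\lambda=13/1000$ gives $\max\{5^\lambda q^{1-\lambda},(11/2)^\lambda b^{1-\lambda}\}<L$, a value strictly between $b$ and $L$ rather than at most $b$.
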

This proves \cref{thm:sharper}(1).
The smaller factor capacity lies in $[\sqrt{27},q]$;
its exact value is unknown. The construction establishes this local
interval, with no claim that its lower endpoint is optimal or that
failure extends to every larger threshold.

\subsection{The graphs and their lower-bound codes}\label{subsix:sec:graphs}

\subsubsection{The 54-vertex graph}

Use the following ordered list of vectors in \(\F_3^5\), with entries
interpreted modulo three.
\begin{center}
\small
\begin{tabular}{r@{\quad}l r@{\quad}l r@{\quad}l}
\toprule
\(i\)&\(w_i\)&\(i\)&\(w_i\)&\(i\)&\(w_i\)\\
\midrule
0&(0,0,0,0,1) & 9&(0,1,2,0,0) & 18&(1,1,1,2,1)\\
1&(0,0,0,1,1) & 10&(0,1,2,2,1) & 19&(1,1,1,2,2)\\
2&(0,0,0,1,2) & 11&(0,1,2,2,2) & 20&(1,1,2,1,1)\\
3&(0,0,1,2,0) & 12&(1,0,0,0,0) & 21&(1,1,2,2,0)\\
4&(0,1,0,1,0) & 13&(1,0,1,0,0) & 22&(1,2,1,1,0)\\
5&(0,1,0,2,0) & 14&(1,0,1,1,2) & 23&(1,2,1,2,1)\\
6&(0,1,1,0,0) & 15&(1,0,2,0,0) & 24&(1,2,1,2,2)\\
7&(0,1,1,1,1) & 16&(1,0,2,2,2) & 25&(1,2,2,1,1)\\
8&(0,1,1,1,2) & 17&(1,1,1,1,0) & 26&(1,2,2,2,0)\\
\bottomrule
\end{tabular}
\end{center}
Let \(B\) have vertex set \(\{0,\ldots,26\}\) and adjacency
\[
 i\sim_Bj\quad\Longleftrightarrow\quad
 i\ne j\ \text{and}\ w_i\cdot w_j=0\text{ in }\F_3.
\]
Let \(D=\overline B\), the simple complement on the same vertex set.
Define
\[
 G=B\sqcup D.
\]
For explicit labels, the \(B\)-copy uses \(0,\ldots,26\) and the \(D\)-copy
uses \(27,\ldots,53\), with \(27+i\) representing vertex \(i\) of \(D\).
There are no edges between the copies.

The \(27\) words
\[
 \mathcal K_G=\{(i,27+i):0\leq i\leq26\}\subseteq V(G)^2
\]
form a code because, for distinct \(i,j\), exactly one of \(i\sim_Bj\) and
\(i\sim_Dj\) holds. Therefore
\begin{equation}\label{subsix:eq:Glower}
 \C(G)\geq\omega(G\OR G)^{1/2}\geq\sqrt{27}.
\end{equation}

\subsubsection{The 27-vertex graph}

Let \(V(H)=\mathbb Z_{29}\setminus\{1,17\}\), ordered as
\(0,2,3,\ldots,16,18,19,\ldots,28\). The following table defines the graph completely
by listing, in row \(v\), all its neighbours larger than \(v\).
Each listed pair is an undirected edge, and there are no loops.
The residue labels specify the vertices of \(H\); adjacency is defined
entirely by the table.
\begin{longtable}{r@{\quad}p{0.84\textwidth}}
\toprule
\(v\)&Larger neighbours in \(H\)\\
\midrule
\endfirsthead
\toprule
\(v\)&Larger neighbours in \(H\), continued\\
\midrule
\endhead
0&\(2,\,4,\,6,\,7,\,8,\,12,\,13,\,16,\,21,\,22,\,23,\,25,\,27\)\\
2&\(4,\,5,\,6,\,7,\,8,\,9,\,10,\,12,\,15,\,22,\,23\)\\
3&\(8,\,9,\,11,\,14,\,15,\,20,\,22,\,23,\,26,\,28\)\\
4&\(5,\,6,\,7,\,8,\,10,\,13,\,15,\,18,\,20,\,22,\,23,\,25,\,28\)\\
5&\(6,\,7,\,9,\,11,\,12,\,13,\,16,\,18,\,19,\,21,\,22,\,24\)\\
6&\(8,\,9,\,10,\,15,\,18,\,20,\,25,\,26,\,27\)\\
7&\(10,\,11,\,13,\,14,\,18,\,19,\,20,\,24,\,25,\,26,\,27,\,28\)\\
8&\(9,\,10,\,12,\,16,\,24\)\\
9&\(10,\,11,\,12,\,13,\,15,\,16,\,20,\,22,\,23,\,25,\,26\)\\
10&\(11,\,13,\,22,\,24,\,28\)\\
11&\(12,\,13,\,14,\,15,\,22,\,23,\,24,\,25\)\\
12&\(13,\,14,\,15,\,16,\,18,\,20,\,21,\,24,\,27,\,28\)\\
13&\(15,\,16,\,20,\,21,\,24\)\\
14&\(15,\,16,\,18,\,20,\,23,\,25,\,26,\,27,\,28\)\\
15&\(18,\,22,\,26,\,28\)\\
16&\(18,\,19,\,20,\,22,\,24,\,25\)\\
18&\(19,\,20,\,22,\,24,\,26\)\\
19&\(20,\,21,\,22,\,23,\,25,\,27,\,28\)\\
20&\(21,\,23,\,24,\,26,\,27\)\\
21&\(23,\,25,\,26,\,27\)\\
22&\(24,\,25,\,27,\,28\)\\
23&\(24,\,25,\,27\)\\
24&\(25,\,27\)\\
25&\(27,\,28\)\\
26&\(28\)\\
27&\(\varnothing\)\\
28&\(\varnothing\)\\
\bottomrule
\end{longtable}
This graph has \(176\) edges. The \(27\) words
\[
 \mathcal K_H=\{(i,12i\bmod29):i\in\mathbb Z_{29}\setminus\{1,17,28\}\}
 \cup\{(28,12)\}
\]
are all in \(V(H)^2\), since \(12i=1\) and \(12i=17\) modulo \(29\)
have the unique solutions \(i=17\) and \(i=28\), respectively.
Their first coordinates are distinct, and the table verifies
\[
 (a,b),(c,d)\in\mathcal K_H,\quad(a,b)\ne(c,d)
 \ \Longrightarrow\
 a\sim_Hc\ \text{or}\ b\sim_Hd.
\]
This is a finite check of \(351\) unordered pairs, also performed by
the exact verifier described in Subsec.~\ref{subsix:app:verification}.
Hence
\begin{equation}\label{subsix:eq:Hlower}
 \C(H)\geq\sqrt{27}.
\end{equation}

\subsection{The finite-field certificates}\label{subsix:sec:rank}

A block representation over $\F_3$ of a graph \(X\), with local block size \(d\)
and ambient dimension \(R\), consists of matrices
\(P_v\in\F_3^{d\times R}\), \(Q_v\in\F_3^{R\times d}\) such that
\[
 P_vQ_v=I_d,\qquad P_vQ_w=0\quad\text{whenever }v\sim_Xw.
\]
Both orientations of an edge are included. The resulting block matrix
$(P_vQ_w)_{v,w}$ need not be symmetric and has rank at most $R$.
By the definition of fractional Haemers' bound in
\suppref{sec:spectrum} and \cref{lem:haemers}, these factors certify
\[
 h_3(X):=\phif{\F_3}(X)\le R/d.
\]
Both $h_3$ and $\overline\vartheta(X)=\vartheta(\overline X)$ are OR
spectral points, as recalled in \cref{sec:spectral-argument} and
\cref{lem:haemers}.

\subsubsection{A rank-five scalar representation of B}

Every listed vector satisfies \(w_i\cdot w_i\ne0\) in \(\F_3\).
Define the row and column factors
\[
 \ell_i=(w_i\cdot w_i)^{-1}w_i^{\mathsf T},
 \qquad v_i=w_i,
\]
where the first expression is regarded as a \(1\times5\) row and
the second as a \(5\times1\) column. Then
\[
 \ell_i v_i=1,\qquad \ell_i v_j=0\quad(i\sim_Bj).
\]
Thus $h_3(B)\le5$.

\subsubsection{A rank-eleven, block-two representation of H}

The following eleven subsets are independent in \(H\), and every vertex
belongs to exactly two of them.
\begin{center}
\begin{tabular}{r@{\quad}l}
\toprule
\(a\)&Independent set \(I_a\)\\
\midrule
1&\{2,\,3,\,13,\,18,\,25\}\\
2&\{3,\,6,\,7,\,16,\,21\}\\
3&\{8,\,11,\,18,\,27,\,28\}\\
4&\{8,\,13,\,22,\,23,\,26\}\\
5&\{0,\,9,\,14,\,19,\,24\}\\
6&\{4,\,9,\,14,\,21,\,24\}\\
7&\{2,\,11,\,16,\,27,\,28\}\\
8&\{0,\,5,\,10,\,15,\,20\}\\
9&\{5,\,10,\,15,\,20,\,25\}\\
10&\{6,\,7,\,12,\,22,\,23\}\\
11&\{4,\,12,\,19,\,26\}\\
\bottomrule
\end{tabular}
\end{center}
Both claims can be checked directly from the adjacency table.
Let $e_1,\ldots,e_{11}$ be the standard coordinate vectors of $\F_3^{11}$.
For \(h\in V(H)\), let \(a(h)<b(h)\) be its two set indices and put
\[
 E_h=[\,e_{a(h)}\ \ e_{b(h)}\,]\in\F_3^{11\times2}.
\]
Then \(E_h^{\mathsf T}E_h=I_2\), and adjacent vertices have disjoint
index pairs, so \(E_h^{\mathsf T}E_{h'}=0\) on every \(H\)-edge.
Thus $h_3(H)\le11/2$; the same representation works over every field.

\subsection{Exact real Gram certificates}\label{subsix:sec:gram}

The following Gram-matrix criterion gives the theta bounds we need.
\begin{lemma}\label{subsix:lem:gram}
Suppose a real positive semidefinite matrix \(M\), indexed by \(V(X)\),
has diagonal entries \(t-1\) and entries \(-1\) on every edge of \(X\).
Then there are unit vectors \(u_v\) and a fixed unit vector \(c\),
called the handle, satisfying
\[
 u_v\perp u_w\quad(v\sim_Xw),\qquad
 \langle c,u_v\rangle=t^{-1/2}.
\]
Consequently $\overline\vartheta(X)\le t$ and $\C(X)\le t$.
\end{lemma}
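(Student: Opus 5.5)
The plan is to factor $M$ as a Gram matrix and then shift the vectors by a common handle component, in the same way the proof of \cref{lem:clebsch} builds a representation from the matrix $T$. Since $M$ is positive semidefinite, we can choose real vectors $x_v$ in some $\mathbb R^R$ with $\langle x_v,x_w\rangle=M_{vw}$. In particular $\|x_v\|^2=t-1$, and $\langle x_v,x_w\rangle=-1$ on every edge. Adjoin one new coordinate, spanned by a unit vector $c$ orthogonal to all the $x_v$, and put
\[
 u_v=t^{-1/2}\bigl(c+x_v\bigr).
\]
Then $\|u_v\|^2=t^{-1}(1+t-1)=1$. On an edge, $\langle u_v,u_w\rangle=t^{-1}(1-1)=0$, and for every vertex $\langle c,u_v\rangle=t^{-1/2}$. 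This gives the orthonormal representation with the required handle. Note that $t\ge1$ is forced, because the diagonal entries $t-1$ of a positive semidefinite matrix are nonnegative, so $t^{-1/2}$ makes sense.

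The consequences then follow as in \cref{lem:binary}. Lov\'asz's orthonormal-representation formula, recalled before \cref{lem:binary}, gives $\overline\vartheta(X)\le\max_v|\langle c,u_v\rangle|^{-2}=t$.

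For the capacity bound there are two routes. The first is that $\overline\vartheta$ is an OR spectral point and every spectral point bounds capacity, as shown in \cref{sec:spectral-argument}. The second is direct. A clique in $X^{\OR n}$ yields tensor vectors $u_{v_1}\otimes\cdots\otimes u_{v_n}$, and these are pairwise orthogonal because two words in the clique are adjacent in some coordinate. Each has squared overlap $t^{-n}$ with $c^{\otimes n}$, so Bessel's inequality bounds the clique size by $t^n$. Taking $n$th roots gives $\C(X)\le t$.

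There is no genuine obstacle here. The only point needing care is that the Gram factorization is taken over $\mathbb R$, and that the handle coordinate is orthogonal to the span of the $x_v$, which we ensure by adding a fresh dimension.
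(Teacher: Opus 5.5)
Your proposal is correct and is essentially the paper's proof: the paper also takes a real Gram factorization $M_{vw}=\langle z_v,z_w\rangle$, sets $u_v=(1,z_v)/\sqrt t$ with handle $c=(1,0,\ldots,0)$, and applies the orthonormal-representation bound, with $\C(X)\le t$ then following from $\C\le\overline\vartheta$. Your remark that $t\ge1$ and your direct tensor--Bessel route to the capacity bound are correct but not needed.
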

\begin{proof}
Write \(M_{vw}=\langle z_v,z_w\rangle\), using a real Gram factorization.
Take \(u_v=(1,z_v)/\sqrt t\) and \(c=(1,0,\ldots,0)\).
The orthonormal-representation bound recalled in Supplementary
App.~\ref{sec:finiteproof} gives $\overline\vartheta(X)\le t$.
\end{proof}

Subsec.~\ref{subsix:app:matrices} supplies the following certificates.
\begin{center}
\begin{tabular}{c r r r c}
\toprule
Graph&Order&Scale $s_X$&Diagonal of $M_X$&Parameter $t_X$\\
\midrule
$B$&27&$24$&$104$&$16/3$\\
$D$&27&$100000000$&$419004112$&$b$\\
$H$&27&$100000000$&$419656120$&$q$\\
\bottomrule
\end{tabular}
\end{center}
Each $M_X$ is positive definite, has entry $-s_X$ on graph edges,
and has normalized diagonal $t_X-1$. The symmetry reductions below
verify positivity on blocks of size at most $6$ for $B,D$, and of sizes
$15$ and $12$ for $H$. Applying Lem.~\ref{subsix:lem:gram} to the
normalized matrices $M_X/s_X$ gives
\[
 \overline\vartheta(B)\le16/3,\qquad
 \overline\vartheta(D)\le b,\qquad
 \overline\vartheta(H)\le q.
\]
The maximum rule~\eqref{eq:max} therefore proves
\begin{equation}\label{subsix:eq:factoruppers}
 \C(G)\le\overline\vartheta(G)
 =\max\{\overline\vartheta(B),\overline\vartheta(D)\}\le16/3<6,
 \qquad \C(H)\le q<6.
\end{equation}

\subsection{The product bound}\label{subsix:sec:weighted}

The graph $G\times H$ is $(B\times H)\sqcup(D\times H)$.
Projecting to the appropriate factor in each component gives
\[
\begin{array}{c|cc}
 &h_3&\overline\vartheta\\ \hline
 B\times H&\le5&\le q\\
 D\times H&\le11/2&\le b .
\end{array}
\]
Apply~\eqref{eq:weighted-main} with these two spectral points. For every
$0\le\lambda\le1$,
\begin{equation}\label{subsix:eq:productupper}
 \C(G\times H)\le T_\lambda
 :=\max\{5^\lambda q^{1-\lambda},(11/2)^\lambda b^{1-\lambda}\}.
\end{equation}
This already controls arbitrary codes of every length, including words
whose coordinates lie in different components.

\subsection{Exact verification of the strict separation}

Choose the rational weight \(\lambda=13/1000\). The required strict
comparisons reduce to the rational inequalities
\begin{align}
 5^{13}\left(\frac{12991403}{2500000}\right)^{987}
 &<
 \left(\frac{5193957}{1000000}\right)^{1000},\label{subsix:eq:power1}\\
 \left(\frac{11}{2}\right)^{13}
 \left(\frac{32437757}{6250000}\right)^{987}
 &<
 \left(\frac{5193957}{1000000}\right)^{1000}.\label{subsix:eq:power2}
\end{align}
The inequalities~\eqref{subsix:eq:power1}--\eqref{subsix:eq:power2} are
checked by integer powers and cross multiplication; the exact
Python statements are given below. Taking positive \(1000\)-th roots
proves \(T_\lambda<L\). Also
\[
 5193957^2=26977189317849<27000000000000=27\cdot10^{12},
\]
so \(L<\sqrt{27}\). Combining \eqref{subsix:eq:Glower}, \eqref{subsix:eq:Hlower},
\eqref{subsix:eq:factoruppers}, and \eqref{subsix:eq:productupper} proves
Prop.~\ref{subsix:thm:main}.

To establish the sub-six spectral claim in \cref{sec:spectral-consequences}, choose the
single interpolating point $\psi\in\spec$ supplied by
\eqref{eq:interpolate} for $h_3,\overline\vartheta$ with weights
$13/1000,987/1000$. The component bounds above and the strict
comparisons~\eqref{subsix:eq:power1}--\eqref{subsix:eq:power2} give
$\psi(G\times H)\le T_{13/1000}<L$.
Every spectral point bounds capacity from above, so
$\psi(G),\psi(H)\ge\sqrt{27}$.
To bound their values from above, the identity fitting matrix gives
$h_3(D)\le27$, and the maximum rule gives $h_3(G)\le27$.
The existing certificates also give
$\overline\vartheta(G)\le16/3$, $h_3(H)\le11/2$ and
$\overline\vartheta(H)\le q<11/2$.
Thus the same interpolant satisfies
\[
 \psi(G)\le27^{13/1000}(16/3)^{987/1000}
 \le\frac{13}{1000}\,27+\frac{987}{1000}\,\frac{16}{3}
 =\frac{1123}{200}<6,
\]
by the weighted arithmetic--geometric mean inequality, while
\[
 \psi(H)\le(11/2)^{13/1000}q^{987/1000}<11/2<6.
\]
Thus one spectral point has product value below $\sqrt{27}$ and
both factor values in $[\sqrt{27},6)$, as asserted in
\cref{sec:spectral-consequences}.

\subsection{Compact integer matrix certificates}\label{subsix:app:matrices}

The scales and diagonals are those in Subsec.~\ref{subsix:sec:gram}.
Every edge entry is minus the scale. Only nonedge entries remain to be
specified. In each table, \texttt{i: j=value} gives an entry at an unordered
pair $\{i,j\}$; extend it to the whole orbit under the stated permutations,
and set $M_{ji}=M_{ij}$. For the group $\Gamma$ generated by those
permutations, this pair orbit is
$\{\{g(i),g(j)\}:g\in\Gamma\}$.
Each nonedge orbit occurs exactly once.

\subsubsection{The two source matrices}

Both $B$ and $D$ are preserved by the permutations
\begin{align*}
 a&=(4\,5)(6\,9)(7\,11)(8\,10)(17\,22)(18\,23)(19\,24)(20\,25)(21\,26)\\
 b_0&=(0\,17)(1\,19)(4\,23)(6\,25)(7\,12)(8\,26)(11\,16)(13\,24)(14\,22)\\
 c&=(0\,16)(2\,15)(4\,25)(5\,20)(6\,23)(7\,22)(9\,18)(11\,17)(12\,14).
\end{align*}
They generate a group $\Gamma$ of order $16$. The following $23$ and $19$
entries determine $M_B$ and $M_D$, respectively.

\noindent\textbf{Source certificate $M_B$:}
\begin{Verbatim}[fontsize=\scriptsize,baselinestretch=1.03]
 0: 1=24 2=6 7=15 8=22 14=-9 16=12 18=18 19=27 20=-2
 1: 3=-31 4=4 19=11 21=5
 2: 3=-4 4=20 8=27
 3: 4=35
 4: 6=2 8=-20 9=19 23=23 26=34
 8: 10=6
\end{Verbatim}
\noindent\textbf{Complement certificate $M_D$:}
\begin{Verbatim}[fontsize=\scriptsize,baselinestretch=1.03]
 0: 3=92395858 4=145133660 6=-30688165 12=49988928 13=182327057 15=132541340
    17=84507591 21=76315484
 1: 2=3087050 6=138042021 8=109278319 13=-87574439
 2: 6=127958661 15=-3681718
 3: 8=40039826
 4: 5=47798171 10=83490663 25=123407703
 8: 26=-16181236
\end{Verbatim}

Here is an elementary reduction of their positivity checks. Write
$A,B_0,C$ for the permutation matrices of $a,b_0,c$, and put $Z=(AB_0)^2$.
These are symmetric involutions; $C,Z$ commute with $A,B_0$, and
$AB_0=ZB_0A$. Thus $A$ and $B_0$ commute on the $Z=+I$ eigenspace
and anticommute on the $Z=-I$ eigenspace.

Let $\varepsilon,\delta,\alpha,\beta\in\{+1,-1\}$ label the eigenvalues
of $Z,C,A,B_0$, respectively. When $\varepsilon=+1$, the simultaneous
eigenspaces of these four commuting symmetric matrices form an orthogonal
decomposition. When $\varepsilon=-1$, decompose instead by the
eigenvalues $\delta,\alpha$ of $C,A$. The orthogonal map $B_0$ carries
each such space onto the space with signs $\delta,-\alpha$.
A $\Gamma$-invariant symmetric matrix commutes with every generator,
preserves these spaces, and has orthogonally equivalent restrictions
on each paired set of spaces. It therefore suffices to check one member
of each pair, explaining the multiplicity $2$ below.

To obtain explicit integer bases, apply
\[
 (I+\beta B_0)(I+\alpha A)(I+\delta C)(I+\varepsilon Z)
\]
to $e_0,e_1,\ldots,e_{26}$ when $\varepsilon=+1$, and omit the first
factor when $\varepsilon=-1$. Divide each nonzero column by the gcd of its
entries, then retain the first linearly independent columns. Denote the
resulting basis matrix by $P$. The nonzero spaces are listed below;
when $\varepsilon=-1$, it suffices to take $\alpha=+1$, with multiplicity
$2$. The dimensions, counted with multiplicity, sum to $27$.

For each certificate, $P^{\mathsf T}M_XP$ represents the restricted
quadratic form in the chosen basis, which need not be orthonormal.
Its positive definiteness is therefore equivalent to that of the
restriction. Divide this matrix by the positive gcd of its entries.
The last column records the leading principal minors, the determinants
of the upper-left square submatrices, for the resulting $M_B$ blocks;
all are positive. Applying the exact
elimination below to the $M_D$ blocks likewise gives only positive minors.
\begin{center}
\small
\begin{tabular}{ccccrrl}
\toprule
$\varepsilon$&$\delta$&$\alpha$&$\beta$&Order&Multiplicity&Minors for $M_B$\\
\midrule
$+$&$+$&$+$&$+$&6&1&$130,1596,5376,2192,96,112$\\
$+$&$+$&$+$&$-$&2&1&$101,13$\\
$+$&$+$&$-$&$+$&2&1&$115,66$\\
$+$&$-$&$+$&$+$&3&1&$29,28,20$\\
$+$&$-$&$+$&$-$&1&1&$1$\\
$+$&$-$&$-$&$+$&1&1&$1$\\
$-$&$+$&$+$&--&4&2&$149,7232,10816,3786$\\
$-$&$-$&$+$&--&2&2&$77,94$\\
\bottomrule
\end{tabular}
\end{center}
Thus both source certificates are positive definite, using blocks of
order at most six.

\Needspace{8\baselineskip}
\subsubsection{The partner matrix}

On $H$, let $\sigma$ fix $0,12,28$ and send every other vertex
$v$ to $29-v$. It preserves adjacency. The next $91$ entries, extended
under $\sigma$, specify $M_H$.
\begin{Verbatim}[fontsize=\scriptsize,baselinestretch=1.03]
 0: 3=-92182906 5=112385240 9=105366413 10=260048876 11=-64530825
    14=103723987 28=24358075
 2: 3=66703246 11=207795723 13=104538630 14=-136454182 16=177512886
    18=115342751 19=-34383691 20=-100268450 21=76229467 24=-95624150
    25=120690256 26=152316144 27=18986268 28=103951980
 3: 4=-35794660 5=-98567795 6=149505933 7=190124261 10=-22994873 12=66710199
    13=124899718 16=61341650 18=120482232 19=-19609272 21=-23529434
    24=-98379162 25=42147361
 4: 9=272941041 11=23305245 12=-70035550 14=241110784 16=61213470
    19=135393685 21=30684152 24=262861878
 5: 8=35191781 10=321011184 14=-104169070 15=375111138 20=418775902
    23=-98757666 28=-47386121
 6: 7=381208558 11=-42891154 12=337627573 13=-146194020 14=-78782821
    16=19526989 19=-86283590 21=90671780 22=187815754 23=202130330
    28=-36203985
 7: 8=-59663804 9=-100785398 12=291996527 15=-44507231 16=55398098
    21=117706337 22=165147825
 8: 11=-56920029 13=259229426 14=-47902526 15=57463096 18=183702360
    19=-222235396 20=41480973 21=10556935 28=92833206
 9: 14=379995036 18=-98046773 19=316558361 28=-53434665
10: 12=-37639343 14=-98004443 15=294664038 16=-99343891 18=-90692258
    19=60606640
11: 16=300836730 18=35163766 28=237971338
13: 14=-57898358 28=23099334
\end{Verbatim}

For the positivity check, order the vertices as $F,P,\sigma(P)$, where
\[
 F=(0,12,28),\qquad P=(2,3,4,5,6,7,8,9,10,11,13,14).
\]
The invariant matrix has the form
\[
 M_H=\begin{pmatrix}E&F_0&F_0\\
 F_0^{\mathsf T}&J&K\\F_0^{\mathsf T}&K&J\end{pmatrix}.
\]
Its quadratic form splits into symmetric and antisymmetric coordinates.
Consequently it is positive definite if and only if
\[
 S=\begin{pmatrix}2E&2F_0\\2F_0^{\mathsf T}&J+K\end{pmatrix}
 \quad\text{and}\quad T=J-K
\]
are positive definite. Indeed, for coordinates $(x,y,z)$ of dimensions
$3,12,12$, respectively,
\[
 \begin{pmatrix}x\\y\\z\end{pmatrix}^{\mathsf T}
 M_H\begin{pmatrix}x\\y\\z\end{pmatrix}
 =\frac12\begin{pmatrix}x\\y+z\end{pmatrix}^{\mathsf T}
 S\begin{pmatrix}x\\y+z\end{pmatrix}
 +\frac12(y-z)^{\mathsf T}T(y-z).
\]
The change of variables $(x,y,z)\mapsto(x,y+z,y-z)$ is invertible,
which proves the equivalence. Exact elimination gives $15$ and $12$ positive
leading principal minors, respectively, proving that $M_H$ is positive
definite.

\Needspace{10\baselineskip}
\subsection{Exact verification and reproducibility}\label{subsix:app:verification}

All entries are specified above. The permutations, orbit coverage,
graph edges and block dimensions are finite checks. For positive
definiteness, we use Bareiss's fraction-free elimination~\cite{Bareiss}.
Sylvester's determinant identity shows that, at stage $k$,
\texttt{pivot} is the leading principal minor of order $k+1$ and all
divisions are exact. Positivity of these minors proves positive
definiteness by Sylvester's criterion.
\begin{Verbatim}[fontsize=\footnotesize,baselinestretch=.96]
def positive_definite(M):
    n = len(M)
    assert all(len(row) == n for row in M)
    assert all(M[i][j] == M[j][i]
               for i in range(n) for j in range(n))
    T = [row[:] for row in M]
    previous = 1
    for k in range(n):
        pivot = T[k][k]
        assert pivot > 0
        for i in range(k + 1, n):
            for j in range(i, n):
                num = pivot*T[i][j] - T[i][k]*T[k][j]
                assert num % previous == 0
                T[i][j] = T[j][i] = num // previous
        previous = pivot
    return n
\end{Verbatim}
The following code checks the strict scalar comparisons.
\begin{Verbatim}[fontsize=\footnotesize,baselinestretch=.96]
from fractions import Fraction as F
b = F(32437757, 6250000)
q = F(12991403, 2500000)
L = F(5193957, 1000000)
assert F(5)**13 * q**987 < L**1000
assert F(11, 2)**13 * b**987 < L**1000
assert L*L < 27 and max(F(16, 3), b, q) < 6
\end{Verbatim}
The verification programs and machine-readable data are available in
the accompanying repository~\cite{ZeissData}.
The program \nolinkurl{verification/check_subsix.py} verifies all finite
inputs and the symmetry reductions. The independent
\nolinkurl{verification/audit_subsix_from_tex.py} reconstructs the printed
data and checks the full matrices by rational $LDL^{\mathsf T}$
factorization. Run either with Python 3.11 or later and assertions enabled:
do not pass \texttt{-O} or set \texttt{PYTHONOPTIMIZE}. Only the standard
library is required. Both checkers use exact arithmetic, without floating-point
operations or a numerical solver. They verify the matrix certificates
used by the analytical interpolation argument.

\paragraph{AI assistance and verification workflow.}
OpenAI Codex assisted with the development and review of the verification
programs and with exploration of candidate constructions and certificates.
The archived inputs are the explicit graph definitions, finite codes and
integer or rational certificate entries printed in these appendices.
The programs check these fixed inputs; they do not rely on an AI service,
a numerical solver or an AI-generated assessment of correctness at run time.
In particular, the source-derived checker reconstructs the printed matrices
independently of the modular checker. Integer and rational computations
establish the finite certificate inequalities. Decimal evaluation in the
uniform-threshold audit is diagnostic only. The spectral interpolation
and asymptotic arguments are proved analytically in the appendices and
are not certified by these finite programs.

%% file: threshold_supplement.tex
\Needspace{16\baselineskip}
\section{Proof of Theorem~\ref*{thm:sharper}(2): coverage of real thresholds}\label{sec:thresholdproof}

This appendix proves the uniform threshold statement in
\cref{thm:sharper}(2). We construct graph pairs covering overlapping
ranges of separating thresholds, then extend this coverage to every
real threshold from $6.0101$ by joining complete graphs.

This appendix combines an analytical construction with computer-assisted
checks of finite graph data and threshold inequalities. OpenAI Codex
assisted with exploring the constructions and with developing and
reviewing the verification code. The finite threshold comparisons and
interval overlaps are checked using exact integer and rational
arithmetic; the extension to all thresholds in the claimed range is
proved analytically.

\begin{theorem}[Failure at every real threshold from $6.0101$]
\label{thm:real-thresholds}
For every real $t\ge6.0101$, there are finite simple graphs $G,H$ such that
\[
 \boxed{\C(G\times H)<t<\min\{\C(G),\C(H)\}.}
\]
In particular, $f_C(t)<t$ throughout this range.
\end{theorem}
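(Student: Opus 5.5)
The proof covers one initial interval of thresholds with a few explicit graph pairs and then shifts that coverage upward by joining complete graphs.

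\textbf{Base pairs.} Each base pair has the form $G=A\sqcup B$ and $H$, where the components come from the binary orthogonality graphs $U_6$, $\overline{U_6}$ and the Clebsch graph $Q$. Concretely, $G$ is a union of two components such as $U_6$ and $\overline{U_6}\join Q$, possibly joined to small cliques, and $H$ is either a complete graph or a graph of the same type. The main text reports an initial pair whose smaller factor capacity is $\sqrt{37}$.

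For each pair I would establish the factor lower bounds in one of two ways:
\begin{itemize}
\item an explicit diagonal code, as for $F_9$, which gives for instance $\sqrt{37}$;
\item \cref{lem:joined-complement}, which bounds $\C(A\sqcup(\overline A\join Q))$ from below and so reaches past $\sqrt{37}$.
\end{itemize}
For the product I would use the weighted interpolation bound \eqref{eq:weighted-main}. It combines two spectral points:
\begin{itemize}
\item $\phif{\F_2}$, controlled on $U_6$ by the rank-$6$ matrix of \cref{lem:binary};
\item $\overline\vartheta$, controlled by the orthonormal representations of \cref{lem:binary,lem:clebsch}.
\end{itemize}
Projection to $H$ bounds the other component. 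I would optimize the weight $\lambda$ to minimise the larger component bound. This gives $\C(G\times H)\le U<L\le\min\{\C(G),\C(H)\}$, so each pair separates every threshold in $(U,L)$. For the initial pair the target is $U<6.0101$ and $L=\sqrt{37}$. The required strict inequalities between real powers reduce to rational comparisons after choosing $\lambda$ rational, and I would check these exactly.

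\textbf{Padding lemma.} Next I would prove \cref{lem:threshold-padding}. Fix $j\ge0$ and join $K_j$ to each component of $G$ and to $H$. By \eqref{eq:joincopies} and join additivity, every spectral point increases by exactly $j$ on each padded component and on $H\join K_j$. The lower-bound argument (minimising over spectral points, as in the proof of \cref{thm:finite}) shows $\C$ of the padded $G$ is $\C(G)+j$, and similarly for $H$, so $L$ shifts to $L+j$.

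For the product, the component bounds $\varphi(A_i\join K_j)\le a_i+j$ feed into the same interpolation. The padded product is still a disjoint union of the products of the padded components with $H\join K_j$. The weighted AM--GM step, used as in the proof of \cref{thm:finite}, shows that the arithmetic-mean form of \eqref{eq:weighted-main} shifts by exactly $j$. So if the base pair's certified bound is the arithmetic form $\max_i\{\lambda h_i+(1-\lambda)q_i\}\le U$, the padded pair has product capacity at most $U+j$. The pair therefore separates every threshold in $(U+j,L+j)$.

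\textbf{Covering all thresholds.} I would choose finitely many base pairs whose ranges $(U_i,L_i)$ overlap and whose union is an interval $(U_*,L_*)$ with $U_*<6.0101$ and $L_*-U_*>1$. Shifted copies by $j=0,1,2,\ldots$ then overlap consecutively, so together they cover $[6.0101,\infty)$. The endpoints are handled by strict overlaps.

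\textbf{Main obstacle.} The hard part is finding base pairs whose arithmetic-mean product bound (not merely the geometric one) beats their lower bounds by a combined window of length greater than one. I would first try the family $U_6\sqcup(\overline{U_6}\join Q\join K_i)$ against $K_m$ and against copies of $Q$-joins for several small $i,m$, tuning $\lambda$ for each. If the arithmetic form loses too much, I would instead pad only the factor $H$ and reprove the shift directly for the geometric form, using that $x\mapsto(x+j)$ preserves the ordering of the component bounds. The final inequalities would be checked in exact arithmetic.
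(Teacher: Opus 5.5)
Your architecture matches the paper's: the spectral points $h=\phif{\F_2}$ and $q=\overline\vartheta$, the weighted bound \eqref{eq:weighted-main} on a two-component first factor, padding in arithmetic-mean form as in \cref{lem:threshold-padding}, and an overlapping cover of a window of length greater than one. \textbf{What is missing is the witness pairs, and the families you propose to try cannot reach $6.0101$.}

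If a pair separates $t\approx6.01$, then $h(H),q(H)\ge\C(H)>t$. So the projection bound on a component $B\times H$ can fall below $t$ only if $h(B)$ or $q(B)$ is itself below $t$. For $B=\overline{U_6}\join Q\join K_i$ both exceed $6.5$, since $q(B),h(B)\ge\C(\overline{U_6})+\C(Q)\ge4+16^{1/3}$. Hence no weight $\lambda$ helps.

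Even the right first factor $U_6\sqcup(\overline{U_6}\join K_1)$, with $h=6$ on one component and $q=5$ on the other, needs a special partner. The first component forces $\lambda$ close to $1$ unless $q(H)$ is very close to $6$. Then the second component is governed by $\min\{19/3,h(H)\}$. So you need a partner of capacity above $6.0101$ whose $\overline\vartheta$ and Haemers values are \emph{simultaneously} very close to $6$. Your toolbox does not supply one:
\begin{itemize}
\item with $K_7$ as partner the bound is only about $6.13$;
\item joins of $Q$, $\overline{U_6}$ and cliques fail too; for instance the partner $\overline{U_6}\join Q$ gives about $6.10$, since only $16^{1/3}\le\C(Q)\le8/3$ is available.
\end{itemize}
Your remark that a diagonal code gives $\sqrt{37}$ also has no source among your graphs, since $U_6\sqcup\overline{U_6}$ yields only $\sqrt{32}$.

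\textbf{The missing ingredient} is the self-complementary circulant $R$ on $\mathbb Z/37\mathbb Z$ with distance set $\{6,7,8,9,10,12,13,15,18\}$ (\cref{lem:circulant37}):
\begin{itemize}
\item multiplication by $6$ gives $R\cong\overline R$, hence the code $(i,6i)$ and $\C(R)\ge\sqrt{37}$;
\item Lov\'asz's vertex-transitive identity gives $q(R)=\sqrt{37}$;
\item translates of an independent $6$-set give $h(R)\le\chi_f(R)=37/6$.
\end{itemize}
Pairing $R$ with $U_6\sqcup(\overline{U_6}\join K_1)$ and optimizing the \emph{geometric} bound gives $T_0=6\exp\bigl(2d^2/(\ell+3d)\bigr)\approx6.01009$. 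So $6.0101$ is this pair's optimum rounded up, and it needs an exact check; the paper uses $\lambda=93/106$.

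The arithmetic form of the same pair gives only $541/90\approx6.0111$. So your requirement that the padded family itself start below $6.0101$ would fail even with $R$. The correct bookkeeping is:
\begin{itemize}
\item use the initial pair unpadded on $(T_0,\sqrt{37})$;
\item require only that the arithmetic-form ranges of the padded pairs form an interval of length greater than one that overlaps $(T_0,\sqrt{37})$.
\end{itemize}
In the paper this interval is $(541/90,7.02)$. It is built from eight pairs with $J\in\{K_1,K_2,C_5,Q\}$ and partners $R$, $K_4\join C_5$, $K_2\join\overline{C_9}$, $K_2\join2C_5$, $3C_5$, $K_7$ and $K_1\join R$. The pentagons and $\overline{C_9}$ supply near-tight capacities between $6$ and $7$, which your toolbox also lacks.

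Finally, your fallback of padding only $H$ does not work. It leaves $\C(G)$, and hence the upper end of each range, unchanged.
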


The proof gives the stronger range $t>T_0$, where
$T_0=6.0100886404\ldots$ is defined exactly
in~\eqref{eq:uniform-optimized}. We first optimize a counterexample
near six, then use eight witnesses to cover a full unit interval and
extend that cover by joining complete graphs. The optimization is
analytical, and all comparisons for the stated decimal cutoff use
exact integer arithmetic.

We use the weighted product bound~\eqref{eq:weighted-main} from
\suppref{sec:interpolation}.
Throughout this section, put
\[
 A=U_6,\qquad D=\overline A,\qquad
 h(X)=\phif{\F_2}(X),\qquad q(X)=\vartheta(\overline X).
\]
Both $h$ and $q$ are OR spectral points. The constructions in
\cref{lem:binary,lem:clebsch} give
\[
 h(A)=6,\qquad q(D)\le4,\qquad
 \C(Q)\ge16^{1/3},\qquad q(Q)\le8/3.
\]
We first provide the remaining small-graph estimates.

\subsection{Fractional colourings and a 37-vertex partner}

We use the fractional-colouring definitions in
\hyperref[def:fractional-colouring]{Supplementary App.~\ref*{sec:spectrum}}
and the inequality $h(X)\le\chi_f(X)$ proved in
\eqref{eq:haemers-colouring} by the coordinate-vector fitting construction.

In particular,
\begin{equation}\label{eq:threshold-colouring-D}
 h(D)\le\chi_f(D)\le\frac{16}{3}.
\end{equation}
Indeed, $A$ is vertex-transitive. Coordinate permutations and the maps
$T_v(x)=x+(x\cdot v)v$ for even-weight $v$ preserve dot products
and odd weight; moreover, $T_v^2$ is the identity. Taking
$v=e_1+e_2+e_3+e_4$ or $v=\mathbf1$ sends $e_1$ to a vector of
weight three or five. Permutations then reach every odd-weight vector.
Thus $D$ is also vertex-transitive. Averaging its independent six-set
$\{e_1,\ldots,e_6\}$ over the automorphism group covers every vertex
equally. Normalizing this coverage to one gives a fractional colouring
of total weight $32/6$, as explained in \suppref{sec:spectrum}, and proves
\eqref{eq:threshold-colouring-D}.

\begin{lemma}[A self-complementary circulant]\label{lem:circulant37}
Let $R$ be the graph on $\mathbb Z/37\mathbb Z$ in which two vertices
are adjacent when their undirected cyclic distance belongs to
\[
 S=\{6,7,8,9,10,12,13,15,18\}.
\]
Then
\[
 \C(R)=q(R)=\sqrt{37},\qquad h(R)\le\chi_f(R)=\frac{37}{6}.
\]
\end{lemma}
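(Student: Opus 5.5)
The plan is to show that multiplication by a unit of $\mathbb Z/37\mathbb Z$ carries $R$ onto its complement. Once that is known, every claimed value follows from the tools already in place: the diagonal code, Lov\'asz's vertex-transitive identity, and averaging an independent set.

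\emph{Step 1 (self-complementarity).} The complement $\overline R$ is the circulant with distance set $S^c=\{1,2,3,4,5,11,14,16,17\}$. I check that $\sigma(x)=6x$ maps $S$ onto $S^c$, reducing modulo $37$ and taking undirected distances:
\[
6\cdot6,\,6\cdot7,\,6\cdot8,\,6\cdot9,\,6\cdot10,\,6\cdot12,\,6\cdot13,\,6\cdot15,\,6\cdot18 \ \mapsto\ 1,5,11,17,14,2,4,16,3 .
\]
Since $6$ is a unit and $\pm S$ and $\pm S^c$ partition the nonzero residues, $\sigma$ is an isomorphism $R\to\overline R$. Moreover, $R$ is vertex-transitive through translations.

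\emph{Step 2 (capacity).} For the lower bound, I use the $37$ words $(x,\sigma(x))$ in $R^{\OR2}$. Take distinct $x,y$. If $xy\notin E(R)$, then $xy\in E(\overline R)$, so $\sigma(x)\sigma(y)\in E(R)$. Hence these words form a clique, exactly as for $U_8\sqcup\overline{U_8}$, and $\C(R)\ge\sqrt{37}$.

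For the upper bound, Lov\'asz's vertex-transitive identity~\cite[Thm.~8]{Lovasz} gives $\vartheta(R)\vartheta(\overline R)=37$. The isomorphism $R\cong\overline R$ gives $\vartheta(R)=\vartheta(\overline R)$. Hence $q(R)=\vartheta(\overline R)=\sqrt{37}$. Since every spectral point bounds capacity from above, $\C(R)\le q(R)$, and both equalities follow.

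\emph{Step 3 (fractional chromatic number).} The set $\{0,1,2,3,4,5\}$ has pairwise distances in $\{1,\dots,5\}\subseteq S^c$, so it is independent and $\alpha(R)\ge6$. Conversely, $\alpha(R)=\omega(\overline R)\le\C(\overline R)$. Because $R\cong\overline R$, we have $\C(\overline R)=\C(R)=\sqrt{37}<7$, so $\alpha(R)=6$.

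For vertex-transitive graphs, $\chi_f=n/\alpha$. The upper bound comes from the averaging construction in \suppref{sec:spectrum}. The lower bound $\chi_f\ge n/\alpha$ holds for every graph, by summing the covering constraints over all vertices. Therefore $\chi_f(R)=37/6$. Finally, \eqref{eq:haemers-colouring} gives $h(R)\le\chi_f(R)$.

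The only nonroutine point is finding the multiplier in Step 1. Once $\sigma=6$ is checked by the display above, the remaining steps are direct applications of earlier results.
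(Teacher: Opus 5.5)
Your proposal is correct and follows essentially the same route as the paper. The multiplier $6$ gives $R\cong\overline R$. The diagonal code together with Lov\'asz's vertex-transitive identity gives $\C(R)=q(R)=\sqrt{37}$. A six-element independent set, with $\alpha(R)<7$, then gives $\chi_f(R)=37/6$. Your set $\{0,\dots,5\}$ is a translate of the paper's $6B=\{0,-1,\dots,-5\}$.
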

\begin{proof}
Multiplication by six exchanges the two members of each distance pair
\[
 (1,6),(2,12),(3,18),(4,13),(5,7),
 (8,11),(9,17),(10,14),(15,16).
\]
Since $S$ chooses one member of every pair, this map is an isomorphism
$R\cong\overline R$. Consequently the 37 words $(i,6i)$ form a clique
in $R\OR R$, proving $\C(R)\ge\sqrt{37}$. The graph is
vertex-transitive, so Lov\'asz's identity
$\vartheta(R)\vartheta(\overline R)=37$~\cite{Lovasz}, together with
self-complementarity, gives the matching theta upper bound.

The set $B=\{0,6,12,18,24,30\}$ is a six-clique because its nonzero
undirected differences lie in $\{6,12,18,13,7\}$. Thus $6B$ is an
independent six-set. Its 37 translates cover every vertex six times,
giving $\chi_f(R)\le37/6$. Conversely, every independent set has
size at most $\vartheta(R)=\sqrt{37}<7$, hence at most six. Summing
the covering constraints of any fractional colouring gives
$37\le6\chi_f(R)$. Thus $\chi_f(R)=37/6$, proving the claim.
\end{proof}

\subsection{The initial pair near six}

Put $G=A\sqcup(D\join K_1)$. For any $\varphi\in\spec$, write
$x=\varphi(A)$ and $y=\varphi(D)$. The 32 diagonal words in
$A\OR D$ form a clique, so $xy\ge32$. The join and maximum rules give
$\varphi(G)=\max\{x,y+1\}$. If this maximum is $r$, then
$32\le xy\le r(r-1)$, and hence
\begin{equation}\label{eq:near-six-factors}
 \C(G)\ge\frac{1+\sqrt{129}}2>\sqrt{37}=\C(R).
\end{equation}
The projections give the spectral upper bounds
\[
\begin{array}{c|cc}
 &h&q\\ \hline
 A\times R&6&\sqrt{37}\\
 (D\join K_1)\times R&37/6&5.
\end{array}
\]
The geometric bound in~\eqref{eq:weighted-main} gives
\[
 \C(G\times R)\le
 \min_{0\le\lambda\le1}
 \max\{6^\lambda(\sqrt{37})^{1-\lambda},
          (37/6)^\lambda5^{1-\lambda}\}.
\]
Put $d=\log(\sqrt{37}/6)>0$ and $\ell=\log(6/5)>0$,
using natural logarithms.
After subtracting $\log6$, the logarithms of the two branches are
$(1-\lambda)d$ and $2\lambda d-(1-\lambda)\ell$.
The first decreases and the second increases with $\lambda$.
They agree at $\lambda_*=(\ell+d)/(\ell+3d)\in(0,1)$,
which therefore minimizes their maximum. Consequently
\begin{equation}\label{eq:uniform-optimized}
 \C(G\times R)\le T_0
 :=6\exp\!\left(\frac{2d^2}{\ell+3d}\right)
 =6.0100886404\ldots .
\end{equation}
Together with~\eqref{eq:near-six-factors}, this proves failure at every
threshold $T_0<t<\sqrt{37}$.

To prove $T_0<6.0101$ exactly, without using the decimal approximation
in~\eqref{eq:uniform-optimized}, use $\lambda=93/106$ in the geometric
bound. Direct integer exponentiation gives
\begin{equation}\label{eq:uniform-integer-check}
\begin{aligned}
 6^{186}37^{13}10000^{212}&<60101^{212},\\
 37^{93}5^{13}10000^{106}&<6^{93}60101^{106}.
\end{aligned}
\end{equation}
Taking the positive $212$th and $106$th roots, respectively, shows that
both branches are strictly below $60101/10000=6.0101$.
The supplementary script \nolinkurl{verification/check_uniform_cutoff.py}
in the accompanying repository~\cite{ZeissData} checks exactly these
two integer comparisons. The same repository includes
\nolinkurl{verification/check_exact.py} and
\nolinkurl{verification/audit_uniform_independent.py}, which check the
finite graph inputs and interval comparisons used below.

For the interval cover below, we retain a simpler arithmetic bound.
Use \eqref{eq:weighted-main} with $\lambda=13/15$ and
$\sqrt{37}<73/12$. The two arithmetic means agree, giving
\[
 \C(G\times R)
 \le\frac{13}{15}\,6+\frac{2}{15}\frac{73}{12}
 =\frac{13}{15}\frac{37}{6}+\frac{2}{15}\,5
 =\frac{541}{90}.
\]
Thus this pair also supplies the interval $(541/90,\sqrt{37})$.
The next three subsections cover every $t>541/90$; combining that
cover with the optimized initial interval proves the theorem.

\subsection{The cycle inputs}

Write $C_n$ for the cycle on $\mathbb Z/n\mathbb Z$, with two vertices
adjacent when their difference is $1$ or $-1$ modulo $n$ ($n\ge3$).
For the pentagon $C_5$, the standard theta calculation and its five-word
square code give
\[
 \C(C_5)=q(C_5)=\sqrt5,\qquad h(C_5)\le\chi_f(C_5)=5/2.
\]
Explicitly, the words $(i,2i)$, for $i\in\mathbb Z/5\mathbb Z$, form
a clique in $C_5\OR C_5$. The upper bound is Lov\'asz's pentagon
calculation~\cite{Lovasz}; the five translates of an independent pair
give the fractional colouring.

Write $Z=\overline{C_9}$. We need only
\[
 \C(Z)\ge81^{1/3},\qquad h(Z)\le9/2,\qquad q(Z)<22/5.
\]
For the first inequality, take the 81 words
\[
 (x,y,2x+4y),\qquad x,y\in\mathbb Z/9\mathbb Z.
\]
If two distinct words were nonadjacent in $Z^{\OR3}$, their coordinate
differences would have representatives $a,b,c\in\{-1,0,1\}$ with
$c-2a-4b\equiv0\pmod9$. Its absolute value is at most seven, so this
is an integer equality. Since $4>2+1$ and $2>1$, it forces
$a=b=c=0$, a contradiction. The nine translates of an edge of $C_9$
are independent pairs of $Z$, proving the colouring bound $9/2$.
Finally, the odd-cycle theta formula~\cite{Lovasz} gives
\[
 q(Z)=\vartheta(C_9)
 =\frac{9\cos(\pi/9)}{1+\cos(\pi/9)}<\frac{22}{5}.
\]
For a rational verification of the strict inequality, put $x=\pi/9$.
Since $1/3<x<1$, the alternating Taylor estimate gives
\[
 \cos x\le1-\frac{x^2}{2}+\frac{x^4}{24}
 \le1-\frac{11x^2}{24}<\frac{205}{216},\qquad
 \frac{9(205/216)}{1+205/216}=\frac{1845}{421}<\frac{22}{5}.
\]

\subsection{Balancing the bounds and adding cliques}

For an auxiliary graph $J$, define
\[
 G_J=A\sqcup(D\join J),\qquad
 \rho_b=\frac{b+\sqrt{b^2+128}}2.
\]
If $\C(J)\ge b>0$, then \cref{lem:joined-complement} gives
$\C(G_J)\ge\rho_b$. The following lemma chooses the interpolation
weight once and for all from three projection bounds.

\begin{lemma}[Balancing and adding cliques]\label{lem:threshold-padding}
Let $\C(J)\ge b>0$, and let $H$ be a partner graph. Suppose
$a>6$ and $s>d$ satisfy
\[
 s\ge q(H),\qquad
 a\ge\min\{h(D\join J),h(H)\},\qquad
 d\ge q(D\join J).
\]
Put
\begin{equation}\label{eq:threshold-balanced}
 U=\frac{as-6d}{a-6+s-d}.
\end{equation}
For every integer $j\ge0$, there are graphs $G^{(j)},H^{(j)}$ with
\[
 \C(G^{(j)}\times H^{(j)})\le U+j,\qquad
 \C(G^{(j)})\ge\rho_b+j,\qquad
 \C(H^{(j)})=\C(H)+j.
\]
\end{lemma}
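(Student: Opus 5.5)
The plan is to pad every piece by the same complete graph and then run the weighted two-point argument of \cref{prop:two} with the spectral points $h=\phif{\F_2}$ and $q(X)=\vartheta(\overline X)$. Concretely, for $j\ge0$ I will set
\[
 G^{(j)}=(A\join K_j)\sqcup(D\join J\join K_j),\qquad H^{(j)}=H\join K_j,
\]
omitting $K_0$. The factor bounds come directly from the spectral rules. For every $\varphi\in\spec$, join additivity and the maximum rule~\eqref{eq:max} give $\varphi(G^{(j)})=\max\{\varphi(A),\varphi(D\join J)\}+j=\varphi(G_J)+j$. Taking the minimum over $\varphi$ in~\eqref{eq:duality} then yields $\C(G^{(j)})=\C(G_J)+j\ge\rho_b+j$, where the last step uses \cref{lem:joined-complement} as recorded just before the lemma. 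Similarly,~\eqref{eq:joincopies} with $k=1$, $s=j$ gives $\C(H^{(j)})=\C(H)+j$.

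For the product, the categorical product distributes over the disjoint union, so $G^{(j)}\times H^{(j)}$ splits into two components. I bound $h$ and $q$ on each component by projecting to whichever factor is better, using join additivity and $h(A)=6$. On the first component $(A\join K_j)\times H^{(j)}$:
\begin{itemize}
\item projection to $A\join K_j$ gives $h\le 6+j$;
\item projection to $H^{(j)}$ gives $q\le q(H)+j\le s+j$.
\end{itemize}
On the second component $(D\join J\join K_j)\times H^{(j)}$:
\begin{itemize}
\item projecting to whichever factor has the smaller $h$-value gives $h\le\min\{h(D\join J),h(H)\}+j\le a+j$;
\item projection to the first factor gives $q\le q(D\join J)+j\le d+j$.
\end{itemize}
The arithmetic form of the weighted bound~\eqref{eq:weighted-main} then gives, for every $\lambda\in[0,1]$,
\[
 \C(G^{(j)}\times H^{(j)})\le j+\max\{6\lambda+(1-\lambda)s,\ a\lambda+(1-\lambda)d\}.
\]
The shift by $j$ appears because both arithmetic means are affine in the bounds.

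Next I balance the two branches. Setting them equal gives $\lambda=(s-d)/(a-6+s-d)$. This lies in $(0,1)$ precisely because $s>d$ and $a>6$. With $1-\lambda=(a-6)/(a-6+s-d)$, the common value is
\[
 \frac{6(s-d)+s(a-6)}{a-6+s-d}=\frac{as-6d}{a-6+s-d}=U.
\]
This gives $\C(G^{(j)}\times H^{(j)})\le U+j$. The weight $\lambda$ does not depend on $j$, matching the phrase ``once and for all.''

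The only delicate point is bookkeeping the minimum in the hypothesis on $a$. The bound $h\le a+j$ on the second component must come from a projection, and the projection can be chosen separately for each component. Monotonicity under both projections gives $h(X\times Y)\le\min\{h(X),h(Y)\}$, and join additivity moves the $+j$ outside the minimum. Everything else is routine algebra.
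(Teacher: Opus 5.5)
Your proposal is correct and matches the paper's proof: the same padded graphs $G^{(j)}=(A\join K_j)\sqcup(D\join J\join K_j)$ and $H^{(j)}=H\join K_j$, the same four projection bounds on the two components, the same $j$-independent weight $\lambda=(s-d)/(a-6+s-d)$, and the same spectral-minimum argument for the factor capacities. The only difference is cosmetic: you cite the packaged bound \eqref{eq:weighted-main} and \eqref{eq:joincopies}, whereas the paper spells out the interpolation, maximum rule and weighted AM--GM steps.
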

\begin{proof}
Take
\[
 G^{(j)}=(A\join K_j)\sqcup(D\join J\join K_j),\qquad
 H^{(j)}=H\join K_j,
\]
omitting $K_0$. The categorical product distributes over disjoint
union. Projection to the factors and additivity under join give the
following bounds on its two components.
\[
\begin{array}{c|cc}
 &h&q\\ \hline
 (A\join K_j)\times H^{(j)}&6+j&s+j\\
 (D\join J\join K_j)\times H^{(j)}&a+j&d+j.
\end{array}
\]
Choose $\lambda=(s-d)/(a-6+s-d)\in(0,1)$. By construction,
\[
 6\lambda+s(1-\lambda)=a\lambda+d(1-\lambda)=U.
\]
Apply the spectral interpolation inequality~\eqref{eq:interpolate}
to $h,q$ with these weights. The resulting single spectral point,
the maximum rule, and the weighted
arithmetic--geometric mean bound the product capacity by $U+j$.
Finally, every spectral point takes value $\varphi(G_J)+j$ on
$G^{(j)}$ and value $\varphi(H)+j$ on $H^{(j)}$.
Taking spectral minima gives the two factor claims.
\end{proof}

\subsection{A single coverage table}

In \cref{tab:threshold-cover}, $kC_5$ means the join of $k$ copies of $C_5$,
and every terminating decimal denotes an exact rational number.
The columns $s,a,d$ satisfy the preceding lemma, and $U$ is
given by~\eqref{eq:threshold-balanced}. Each row gives
\begin{equation}\label{eq:threshold-row}
 \C(G_J\times H)\le U<L<\min\{\C(G_J),\C(H)\}.
\end{equation}
Thus that row works for every threshold in the open interval $(U,L)$.

\begin{table}[htbp]
\centering
\small
\renewcommand{\arraystretch}{1.18}
\begin{tabular}{@{}llccccc@{}}
\toprule
$J$ & $H$ & $s$ & $a$ & $d$ & $U$ & $L$\\
\midrule
$K_1$ & $R$ & $73/12$ & $37/6$ & $5$ & $541/90$ & $6.08$\\
$K_1$ & $K_4\join C_5$ & $156/25$ & $19/3$ & $5$ & $357/59$ & $6.17$\\
$K_2$ & $K_4\join C_5$ & $156/25$ & $13/2$ & $6$ & $228/37$ & $6.23$\\
$K_2$ & $K_2\join Z$ & $32/5$ & $13/2$ & $6$ & $56/9$ & $6.325$\\
$K_2$ & $K_2\join2C_5$ & $162/25$ & $7$ & $6$ & $234/37$ & $6.47$\\
$K_2$ & $3C_5$ & $168/25$ & $22/3$ & $6$ & $498/77$ & $6.7$\\
$C_5$ & $K_7$ & $7$ & $7$ & $156/25$ & $289/44$ & $6.88$\\
$Q$ & $K_1\join R$ & $85/12$ & $43/6$ & $20/3$ & $775/114$ & $7.02$\\
\bottomrule
\end{tabular}
\caption{One formula certifies every interval $(U,L)$. The first factor
is $G_J=A\sqcup(D\join J)$; $s,a,d$ are spectral projection bounds.}
\label{tab:threshold-cover}
\end{table}

Here $s$ bounds $q(H)$ and $d$ bounds $q(D)+q(J)$.
For $a$, use $h(D)+h(J)$ in rows 2 and 6, using
\eqref{eq:threshold-colouring-D}, and $h(H)$ in all other rows.
All entries follow from the preceding graph bounds, additivity under
join, and the elementary enclosures
\[
 2.235<\sqrt5<2.24,\qquad
 2.5<16^{1/3},\qquad 4.325<81^{1/3},
\]
and $6.08<\sqrt{37}<73/12$. Each follows by squaring or cubing.

For the first factor, $L<\rho_b$ follows from $L>b$ and
$L(L-b)<32$. For each choice of $J$, it suffices to check the largest
$L$. Using the respective lower bounds $b=1,2,2.235,2.5$, we obtain
\[
\begin{aligned}
 6.17(6.17-1)&=31.8989<32,&
 6.7(6.7-2)&=31.49<32,\\
 6.88(6.88-2.235)&=31.9576<32,&
 7.02(7.02-2.5)&=31.7304<32.
\end{aligned}
\]
For the partners, the capacity lower bounds are, in row order,
\[
 \sqrt{37},\ 4+\sqrt5,\ 4+\sqrt5,\ 2+81^{1/3},\
 2+2\sqrt5,\ 3\sqrt5,\ 7,\ 1+\sqrt{37}.
\]
The same rational enclosures place each strictly above its listed $L$.
This proves~\eqref{eq:threshold-row} for every row.

\FloatBarrier
\begin{proof}[Completion of the proof of \cref{thm:real-thresholds}]
Write $(U_i,L_i)$ for the intervals in
\cref{tab:threshold-cover}. Direct cross-multiplication gives
$U_i<L_i$ and $U_{i+1}<L_i$; the smallest consecutive overlap
is $6.325-234/37=1/1480>0$. Their union therefore contains
$(541/90,7.02)$, an interval of length greater than one.

For any $t>c:=541/90$, choose $j=\lceil t-c\rceil-1\ge0$.
Then $c<t-j\le c+1<7.02$, so some row has $U_i<t-j<L_i$.
\Cref{lem:threshold-padding} gives
\[
 \C(G^{(j)}\times H^{(j)})\le U_i+j<t<L_i+j
 <\min\{\C(G^{(j)}),\C(H^{(j)})\}.
\]
Thus the same eight witnesses, extended by joining complete graphs,
cover every $t>541/90$.

The optimized initial pair covers $(T_0,\sqrt{37})$, and
\eqref{eq:uniform-integer-check} gives
\[
 T_0<6.0101<541/90<\sqrt{37}.
\]
The two ranges overlap, so together they cover every $t>T_0$.
In particular they cover every $t\ge6.0101$, including the stated
endpoint. This proves the uniform clause of \cref{thm:sharper}.
\end{proof}

For the real-threshold spectral claim in \cref{sec:spectral-consequences}, fix
$t\ge6.0101$ and choose a pair $G_t,H_t$ from the construction above.
By~\eqref{eq:duality}, there is a spectral point $\psi_t\in\spec$
attaining the capacity of $G_t\times H_t$. Since every spectral point
bounds capacity from above, we obtain
\[
 \psi_t(G_t\times H_t)=\C(G_t\times H_t)<t
 <\min\{\C(G_t),\C(H_t)\}
 \le\min\{\psi_t(G_t),\psi_t(H_t)\}.
\]
This is Simonyi's implication~\cite[Thm.~8]{Simonyi} applied to each
pair. The choice of $\psi_t$ may depend on $t$.

\subsection{Scope of the sharper thresholds}

The positive cases $1$ and $2$ recalled in \suppref{sec:spectrum},
together with \cref{thm:finite}, show that the smallest integer value of
the smaller factor capacity admitting a counterexample lies between
$3$ and $7$. The unresolved exact integer minima are $3,\ldots,6$. The pair in
\cref{sec:subsixproof} has smaller factor capacity in
$[\sqrt{27},5.1965612]$; its exact value is unknown.
The earlier pair in~\eqref{eq:near-six-factors} has exact smaller
factor capacity $\sqrt{37}$.
By \cref{subsix:thm:main}, the sub-six pair gives failure at every
threshold $5.193957\le t<\sqrt{27}$, whereas this section covers
every $t>T_0$. A cover joining the lower interval to this upper ray
is still missing. In particular, the strict-threshold formulation
does not include $t=\sqrt{27}$ merely from the lower bound
$\min\{\C(G),\C(H)\}\ge\sqrt{27}$, and the upper-ray proof does not
include $t=T_0$. This does not assert that each individual threshold
between these endpoints is unresolved.

Neither result classifies all possible exact noninteger minimum
capacities; no optimality is claimed.
Polak's circular-graph constructions provide, after complementation,
graphs whose OR-capacities approach any integer $r\ge3$ from
below~\cite[Chap.~9, Thms.~9.3.1 and~9.3.3]{PolakThesis}.
These constructions do not determine whether the product identity holds
at those capacities.